\newcommand{\af}{{\mathrm{af}}}
\newcommand{\al}{\alpha}
\newcommand{\ap}{a_{>0}}
\newcommand{\A}{\mathbb{A}}
\newcommand{\B}{\mathbb{P}}
\newcommand{\C}{\mathbb{C}}
\newcommand{\Cinf}{\C^{(-\infty,\infty]}}
\newcommand{\core}{\mathrm{core}}
\newcommand{\Core}{\mathrm{Core}}
\newcommand{\defn}[1]{{\textbf{#1}}}
\newcommand{\Fl}{\mathrm{Fl}}
\newcommand{\For}{\mathrm{For}}
\newcommand{\Fun}{\mathrm{Fun}}
\newcommand{\hh}{\hat{h}}
\newcommand{\hK}{\hat{K}}
\newcommand{\hommap}{\kappa}
\newcommand{\hs}{\hat{s}}
\newcommand{\hS}{\hat{S}}
\newcommand{\hL}{\hat{\Lambda}}
\newcommand{\Gr}{\mathrm{Gr}}
\newcommand{\hB}{\hat{\mathbb P}}
\newcommand{\hPhi}{\hat{\Phi}}
\newcommand{\id}{\mathrm{id}}
\newcommand{\Image}{\textrm{Im}}
\newcommand{\ip}[2]{\langle #1\,,\,#2\rangle}
\newcommand{\la}{\lambda}
\newcommand{\La}{\Lambda}
\newcommand{\mt}{\tilde{m}}
\newcommand{\Q}{\mathbb{Q}}
\newcommand{\pt}{\mathrm{pt}}
\newcommand{\R}{{\mathbb R}}
\newcommand{\s}{{\mathbf{s}}}
\newcommand{\Snz}{S_{\ne0}}
\newcommand{\tF}{\tilde{F}}
\newcommand{\tS}{\tilde{S}}
\newcommand{\Y}{\mathbb{Y}}
\newcommand{\Z}{\mathbb{Z}}
\newtheorem*{statement}{Theorem}
\newtheorem{prop}{Proposition}
\newtheorem{lem}[prop]{Lemma}
\newtheorem{cor}[prop]{Corollary}
\newtheorem{lemma}[prop]{Lemma}
\newtheorem{thm}[prop]{Theorem}
\newtheorem{theorem}[prop]{Theorem}
\newtheorem{conj}[prop]{Conjecture}
\theoremstyle{definition}
\newtheorem{rem}[prop]{Remark}
\newtheorem{remark}[prop]{Remark}
\newtheorem{example}[prop]{Example}
\title[$k$-double Schur functions]{$k$-double Schur functions and equivariant (co)homology of the affine Grassmannian}
\begin{document}
\author{Thomas Lam}
\address{Department of Mathematics,
University of Michigan, 530 Church St., Ann Arbor, MI 48109 USA}
\email{tfylam@umich.edu}
 \thanks{T.L. was supported by NSF grant DMS-0901111, and by a Sloan Fellowship.}
\author{Mark Shimozono}
\address{Department of Mathematics, Virginia Tech, Blacksburg, VA 24061-0123 USA}
\email{mshimo@vt.edu}
\thanks{M.S. was supported by NSF DMS-0652641 and DMS-0652648.}

\begin{abstract} The Schubert bases of the torus-equivariant homology and cohomology rings of
the affine Grassmannian of $SL_{k+1}$ are realized by new families of symmetric functions
called $k$-double Schur functions and affine double Schur functions.
\end{abstract}
\maketitle

\tableofcontents

\section{Introduction}
Let $\Gr = \Gr_{SL_n}$ denote the affine Grassmannian of $SL_n$.  The space $\Gr$ is an ind-scheme with a distinguished stratification $\Gr = \bigsqcup X_w$ by Schubert varieties.  The torus-equivariant Schubert classes of $X_w$ form dual bases of the torus-equivariant homology $H_T(\Gr)$ and cohomology $H^T(\Gr)$.  The weak homotopy equivalence between $\Gr$ and the based loop group $\Omega SU(n)$ endows $H_T(\Gr)$ and $H^T(\Gr)$ with the structure of dual Hopf algebras.  The main aim of this article is to produce explicit polynomial representatives for the equivariant Schubert classes $[X_w]_T \in H_T(\Gr)$ and $[X_w]^T \in H^T(\Gr)$.

We first construct two dual Hopf algebras of symmetric functions $\La^{(n)}(x\|a)$ and $\hL_{(n)}(y\|a)$.  The former is a quotient of the ring $\La(x\|a)$ of double symmetric functions, and the latter is a subalgebra of the ring $\hL(y\|a)$ of dual double symmetric functions.  The ring $\La(x\|a)$ is connected to the torus-equivariant cohomology of the finite Grassmannian \cite{KT}.  The ring $\hL(y\|a)$ was defined by Molev \cite{Mol}.

We introduce new symmetric functions $\tF_\la(x\|a) \in \La^{(n)}(x\|a)$, called \defn{affine double Schur functions}, and $s^{(k)}_\la(y\|a) \in \hL_{(n)}(y\|a)$, called \defn{$k$-double Schur functions}.  When the ``equivariant parameters'' $a_i$ are specialized to 0, these symmetric functions reduce to the usual affine Schur functions \cite{LamAS}, and the $k$-Schur functions of Lapointe, Lascoux, and Morse \cite{LLM,LM}.  Affine Schur functions are a special case of affine Stanley symmetric functions, and we also give a definition of \defn{affine double Stanley symmetric functions} $\tF_w(x\|a)$.  Stanley symmetric functions were first introduced in the study of reduced factorizations in the symmetric group.  On the other hand, $k$-Schur functions were introduced in the apparently unrelated subject of Macdonald polynomial positivity.

These families $\tF_\la(x\|a)$ and $s^{(k)}_\la(y\|a)$ of symmetric functions form bases of the respective rings and are labeled by partitions whose first part is less than $n$.  There is a bijection $\la \leftrightarrow w_\la^\af$ between such partitions and the affine Grassmannian permutations $\{w \in \tS_n^0\}$, which label the Schubert varieties of $\Gr$.  Our main theorem is:

\begin{statement}\
\begin{enumerate}
\item
There are dual Hopf-isomorphisms 
\begin{align*}
\hommap: &\hL_{(n)}(y\|a) \to H_T(\Gr) \\
\hommap^*:& H^T(\Gr) \to \Lambda^{(n)}(x\|a)
\end{align*}
such that
\begin{align*}
s^{(k)}_\la(y\|a) &\longmapsto [X_{w_\la^\af}]_T \\
[X_{w_\la^\af}]^T &\longmapsto \tF_\la(x\|a).
\end{align*}
\item
For each $T$-fixed point $v \in \Gr$, there is a map $\epsilon_v: \La^{(n)}(x\|a) \to H^T(\pt)$, defined by specializing the $x$'s to $a$'s in a prescribed manner, such that
$$
\epsilon_v(\tF_\la(x\|a)) = \iota_v^*([X_{w_\la^\af}]^T)
$$
where $\iota_v: \pt \to \Gr$ is the inclusion of the $T$-fixed point. 
\end{enumerate}
\end{statement}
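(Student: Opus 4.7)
The plan is to establish Part~(2) first and then derive Part~(1) by $T$-equivariant localization (GKM) together with Hopf duality.

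For Part~(2), I would produce a reduced-word (Billey--Graham style) formula for $\epsilon_v(\tF_\la(x\|a))$ and match it with the Kostant--Kumar / Andersen--Jantzen--Soergel formula for $\iota_v^*([X_{w_\la^\af}]^T)$. Concretely, I would expand the affine double Stanley symmetric function $\tF_w(x\|a)$ as a sum over reduced factorizations of $w$ in the affine symmetric group $\tS_n$; the specialization $\epsilon_v$, which sends the $x$-variables to a sequence of $a$-values determined by $v$, should collect precisely the subwords of those factorizations that evaluate to $v$, each weighted by the appropriate equivariant root-difference $a_i - a_j$. This reproduces Billey's reduced-word formula, extended to the affine Grassmannian of $SL_{k+1}$, for $\iota_v^*([X_{w_\la^\af}]^T)$. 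Verifying this formula, and in particular its independence of the chosen reduced word, is the technical heart of the argument.

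With Part~(2) in hand, Part~(1) for cohomology follows from GKM injectivity: the sum of restriction maps $\bigoplus_v \iota_v^*$ embeds $H^T(\Gr)$ into $\prod_v H^T(\pt)$, and since the $\epsilon_v$ separate affine double Schur functions (by upper-triangularity with respect to Bruhat order, visible from the reduced-word formula), the assignment $[X_{w_\la^\af}]^T \mapsto \tF_\la(x\|a)$ extends uniquely to an $H^T(\pt)$-algebra homomorphism $\hommap^*$. Its image lies in the quotient $\La^{(n)}(x\|a)$ of $\La(x\|a)$ cut out by the vanishing conditions coming from the $T$-fixed points of $\Gr$, which identifies the target ring; multiplicativity is automatic because each $\epsilon_v$ is a ring homomorphism; bijectivity follows from matching bases indexed by partitions with first part $<n$. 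The Hopf structures agree because the coproduct on $H^T(\Gr)$ is induced by the loop-group product on $\Omega SU(n)$ while the coproduct on $\La^{(n)}(x\|a)$ reduces modulo $a$ to the standard coproduct on $\La$; checking agreement on a set of algebra generators then suffices.

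For the homology half of Part~(1) I would dualize. The rings $\La^{(n)}(x\|a)$ and $\hL_{(n)}(y\|a)$ are Hopf-dual with respect to a pairing that makes $\{\tF_\la\}$ and $\{s^{(k)}_\mu\}$ dual bases, while the Hopf duality between $H^T(\Gr)$ and $H_T(\Gr)$ pairs the two families of Schubert classes. Taking the transpose of $\hommap^*$ yields the Hopf isomorphism $\hommap:\hL_{(n)}(y\|a)\to H_T(\Gr)$ with $s^{(k)}_\la(y\|a)\mapsto [X_{w_\la^\af}]_T$ automatically. The principal obstacle is the reduced-word evaluation formula for $\epsilon_v(\tF_\la(x\|a))$: one must define $\epsilon_v$ so that it descends to a well-defined ring homomorphism on the quotient $\La^{(n)}(x\|a)$ (rather than on $\La(x\|a)$ only) and then verify it sends the combinatorially defined $\tF_\la$ to the affine Billey expression. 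A secondary technical issue is showing that $\hommap^*$ really factors through $\La^{(n)}(x\|a)$, i.e.\ that the equivariant Schubert localizations annihilate the ideal defining the quotient; this should follow from the non-existence of certain $T$-fixed points beyond the affine Grassmannian locus.
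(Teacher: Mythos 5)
Your proposal inverts the paper's logical order: the paper establishes the homology half of Part~(1) first, by working inside the Peterson subalgebra $\B=Z_{\A}(S)$ of the small-torus affine nilHecke ring, and then deduces the localization statement (Part~(2)) as a corollary; you propose to prove Part~(2) first by a direct localization computation and then recover Part~(1) by GKM injectivity and duality. The direction of your derivation is not itself the problem --- the GKM/duality formalism you invoke (injectivity of $\bigoplus_v \iota_v^*$, identification of the quotient $\La^{(n)}(x\|a)$ with $\Phi_\Gr$, transposing $\hommap^*$ to get $\hommap$) is all consistent with the machinery the paper builds in its Sections on GKM rings and on $\epsilon_\Gr$.

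The genuine gap is the step you yourself flag as ``the technical heart'': the reduced-word (Billey-type) expansion of $\tF_w(x\|a)$ on which everything else rests. In this paper $\tF_w(x\|a)$ is \emph{defined} by the identity $\sum_w \tF_w(x\|a)\,A_w=\sum_{\mu_1<n} m_\mu(x\|a)\,j_{\mu_1}j_{\mu_2}\dotsm$ in $\La^{(n)}(x\|a)\otimes_S\A$, i.e.\ its coefficients in Molev's double monomial basis are the $A_w$-coefficients of products of Peterson's classes $j_{\mu_i}$. No expansion over reduced factorizations is available: the authors state explicitly that the double monomial functions $m_\mu(x\|a)$ have no known combinatorial description, so $\epsilon_v(\tF_\la(x\|a))$ cannot be ``collected over subwords'' as you propose. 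A Billey formula for $\iota_v^*([X_{w_\la^\af}]^T)$ does exist (Kostant--Kumar, specialized from the full affine torus to the small torus --- itself a nontrivial specialization, since the small-torus GKM conditions and the uniqueness of the $\xi^v$ require the separate argument of the paper's Proposition on $\hPhi_\Gr$), but proving that $\epsilon_v(\tF_\la(x\|a))$ equals that expression is essentially the content of the theorem, not an input to it. The paper circumvents this by a duality argument: the pairing $\langle\xi^v,A_w\rangle=\delta_{vw}$ between localizations and the nilHecke basis, together with the coproduct computation showing $\Delta(j_i)$ has the same structure constants as $\Delta\hh_i(y\|a)$ (their Proposition on the special classes), yields $\hommap'(s^{(k)}_\la(y\|a))=j_{w_\la^\af}$ from the Cauchy identity, and only then does $\epsilon_\Gr(\tF_\la(x\|a))=\xi^{w_\la^\af}$ drop out. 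Without either that duality mechanism or a new combinatorial formula for $m_\mu(x\|a)$, your first step cannot be carried out, and the remainder of the argument has nothing to stand on. Secondarily, your verification of the Hopf compatibility ``on algebra generators'' silently requires exactly the coproduct structure-constant comparison that is the paper's key lemma; it should be stated as such rather than treated as automatic.
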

The analogue of (1) for the non-equivariant (co)homologies $H_*(\Gr)$ and $H^*(\Gr)$ was previously established by the first author \cite{Lam}, confirming a conjecture of the second author.  Part (2) of the theorem is an analogue of a well-known property of double Schur functions \cite{KT} and the double Schubert polynomials of Lascoux and Sch\"{u}tzenberger \cite{B} \cite{LSch}.  Thus our polynomial representatives combine the best of two worlds: (a) they are compatible with the Hopf structure, that is, they correctly calculate product and coproduct structure constants; and (b) they compute equivariant localizations by specializing one set of variables in terms of the other.  Indeed, that a family of symmetric functions with both these properties could be constructed was somewhat of a surprise.

In the $n \to \infty$ limit, one has the infinite Grassmannian $\Gr_\infty$ instead of the affine Grassmannian.  As a preliminary step in our calculations, we also connect the ring $\La(x\|a)$ to the torus-equivariant cohomology $H^{T_\Z}(\Gr_\infty)$ of the infinite Grassmannian.  In this limit, the affine double Schur functions $\tF_\la(x\|a)$ reduce to the double Schur functions $s_\la(x\|a)$, and we show that the latter can be identified with the Schubert bases of $\Gr_\infty$.

Our general strategy to establish part (1) of the Theorem mostly follows \cite{Lam}. Peterson showed \cite{P} that the Hopf algebra $H_T(\Gr)$ can be identified with a centralizer subalgebra $\B$ of a small torus variant $\A$ of Kostant and Kumar's nilHecke algebra for $\hat{SL}_n$, and furthermore characterized the image of the Schubert basis in $\B$ \cite{P}.  The affine double Schur functions (and more generally the affine double Stanley functions) are defined by declaring their expansion coefficients in Molev's double monomial symmetric function basis to be given by coefficients of the ``standard'' divided difference basis of $\A$ in products of Peterson's Schubert basis $\{j_w\} \in \B$.  An explicit (positive, combinatorial) formula for these coefficients is discussed in a separate article on the equivariant homology affine Pieri rule for $H_T(\Gr)$ \cite{LS:Pieri}.\footnote{However, this does not lead to a completely combinatorial formula for $\tF_\la(x\|a)$.  Molev's double monomial symmetric functions are defined via duality, and as far as we are aware, do not yet have an explicit combinatorial description.}  We compute the coproduct structure constants for certain special classes of $\B$ and observe that they agree with those for Molev's dual homogeneous symmetric functions.  This allows us to obtain a Hopf morphism from $\hL_{(n)}(y\|a)$ to $\B$.

To obtain part (2) of our theorem, we study the GKM ring $\Phi$ for $H^{T_\Z}(\Gr_\infty)$ and the small torus GKM ring $\Phi_\Gr$ , which is a model for $H^T(\Gr)$.  The ring $\Phi$ is a ring of functions $f: S_\Z \to H^{T_\Z}(\pt)$ on the infinite symmetric group, satisfying certain divisibility conditions described by Goresky, Kottwitz, and Macpherson \cite{GKM}.  General GKM theory, or alternatively the work of Kostant and Kumar \cite{KK}, shows that $\Phi$ can be identified with the image of $H^{T_\Z}(\Gr_\infty)$ in $\prod_{w \in S_\Z}H^{T_\Z}(\pt)$ and that one has $\Phi \simeq H^{T_\Z}(\Gr_\infty)$.  The ring $\La(x\|a)$ of double symmetric functions can be identified with $\Phi$ via a sequence of specialization homomorphisms, where the $a$-variables are substituted for the $x$-variables in prescribed ways.  While $H^T(\Gr)$ does not satisfy the original conditions required for the GKM construction \cite{GKM}, nevertheless Goresky, Kottwitz, and Macpherson still give an analogous isomorphism $H^T(\Gr) \simeq \Phi_\Gr$ in \cite{GKM2}.

The affine Grassmannian $\Gr$ can be embedded in the infinite Grassmannian $\Gr_\infty$, and this induces a maps $H^{T_\Z}(\Gr_\infty)\to H^T(\Gr_\infty)\to H^T(\Gr)$.  We explicitly calculate the kernel $I_n$ of the induced (surjective) map $\La(x\|a) \simeq H^{T_\Z}(\Gr_\infty) \to H^T(\Gr)$, obtaining a description of the quotient $\La^{(n)}(x\|a) \simeq H^T(\pt)\otimes_{H^{T_\Z}(\pt)} \La(x\|a)/I_n$.  This description is then compared to the Hopf algebras obtained via the definition of affine double Schur functions, and shown to agree.

\section{Affine and infinite Grassmannians}
We refer the reader to Kumar \cite{Ku} for more details concerning the geometry discussed in this section.
\subsection{Infinite Grassmannian}
We recall the ind-variety structure on the infinite Grassmannian (see, for example, \cite{KLMW}.
Let $\Cinf$ be the vector space containing independent elements
$e_i$ for $i\in\Z$ and consisting of infinite $\C$-linear combinations $\sum_{i\in\Z} c_i e_i$
where $c_i=0$ for $i\ll 0$. For $i\in\Z$ let $E_i=\prod_{j\ge i} \C e_j$.
Let $\Gr_\infty'$ be the set of subspaces $V\subset\Cinf$ such that
$E_a \supset V \supset E_b$ for some $a\le 0 < b$. Then the infinite Grassmannian $\Gr_\infty$
is the set of $V\in \Gr_\infty'$ such that $\dim((V+E_1)/E_1) = \dim((V+E_1)/V)$.
Note that $\{V\in\Gr_\infty\mid E_a \supset V\supset E_b\}\cong \Gr(b-1,b-a)$
via $V\mapsto V/E_b$. This gives $\Gr_\infty$ the structure of an ind-variety, being the union of Grassmannians.

The infinite Grassmannian $\Gr_\infty$ may also be thought of as a partial flag ind-variety for a Kac-Moody group. 
Consider the bi-infinite type $A$ root datum with Dynkin node set $I_\Z=\Z$,
simple bonds joining $i$ and $i+1$ for $i\in I_\Z$, and a weight lattice 
$\bigoplus_{i\in\Z} \Z a_i$. The Weyl group for the Kac-Moody flag ind-variety of this root datum
is $S_\Z$, the subgroup of the group $\hS_\Z$ of all permutations of $\Z$ consisting of the
permutations that move only finitely many elements.
Consider the maximal parabolic subgroup $\Snz\subset S_\Z$
generated by the reflections $s_i$ for $i\in I_\Z \setminus \{0\}$;
it is the subgroup that stabilizes the subset $\Z_{>0}$ of $\Z$
(and hence also $\Z_{\le0}$). Let $S_\Z^0$ denote the poset
of minimum length coset representatives of $S_\Z/\Snz$ under Bruhat order.
The infinite Grassmannian is the Kac-Moody partial flag ind-variety corresponding to this parabolic subgroup. 
As such, the infinite Grassmannian has a stratification by Schubert varieties $X_w$, labeled by $w \in S_\Z^0$. 

We describe the Schubert varieties explicitly.
There is a poset isomorphism from $S_\Z^0$ to Young's lattice $\Y$ of partitions,
denoted $w\mapsto \la=\la_w$, defined by
\begin{align}\label{E:law}
\la_i  &= w(1-i)+i-1 \qquad\text{for $i\in\Z_{>0}$.}
\end{align}
The inverse map $\la\mapsto w_\la$ is defined by
\begin{align}
\label{E:wla}
  w_\la &= \dotsm \rho^{-2+\la_3}_{-2}\rho^{-1+\la_2}_{-1} \rho^{\la_1}_0 &\qquad&\text{where} \\
\label{E:rhoij}
  \rho_i^j &= s_{j-1} s_{j-2}\dotsm s_{i+1} s_i &&\text{for $j\ge i$ in $I_\Z$.}
\end{align}
There is a bijection between $\Y$ and the set of \defn{almost natural subsets}
of $\Z$, those subsets $I\subset \Z$ such that the sets $\Z_{>0}\setminus I$ and
$I\cap \Z_{\le0}$ are both finite and have the same cardinality. The bijection
is given by $\la\mapsto I_\la:= \{1-\la'_1,2-\la'_2,\dotsc\}$ where $\la'$ is the conjugate partition
to $\la$.

For $w\in S_\Z^0$ the Schubert cell $X_w^\circ$ consists of the $V\in\Gr_\infty$
whose pivot set is $w\cdot \Z_{>0}$, where the pivot set of $V$ is defined by
$$\{i\in\Z\mid \dim((V+E_{i+1})/E_{i+1})>\dim((V+E_i)/E_i)\}.$$
The Schubert variety $X_w$ is the Zariski closure of $X_w^\circ$.

Let $T_\Z \simeq (\C^*)^{\Z}$ be the torus with a coordinate weight for each $e_i$.
Then $T_\Z$ acts on $\Cinf$ and therefore on $\Gr_\infty$.
The equivariant cohomology $H^{T_\Z}(\Gr_\infty)$ is a free $H^{T_\Z}(\pt)$-module 
with basis given by the equivariant Schubert classes $\{[X_w]^T\mid w\in S_\Z^0\}$ (see \cite{Ku}).
Each Schubert cell $X_w^\circ$ contains a unique $T$-fixed point $e_{w\cdot \Z_{>0}}$
where $e_I:=\prod_{i\in I} \C e_i$ for the almost natural subset $I\subset \Z$.

We have $H^{T_\Z}(\pt)\cong \Q[a] = \Q[\ldots,a_{-1},a_0,a_1,\ldots]$.\footnote{Cohomology is taken with coefficients in $\Q$ unless
explicitly stated otherwise.}  We shall make the nonstandard choice that the weight of $e_i$ is $-a_{1-i}$.
The reflections in $S_\Z$ are the transpositions $s_{ij}$ for integers $i<j$;
we have $s_i=s_{i,i+1}$. The associated root of $s_{ij}$ is denoted $\alpha_{ij}$. Because of our unconventional indexing of the $a_i$, \textbf{the root $\alpha_{ij}$ is identified with the element $a_{1-j} - a_{1-i}$ of the weight lattice.}

\subsection{Affine Grassmannian}
\label{SS:Gr}
Let $\Gr=\Gr_{SL_n}$ denote the affine Grassmannian of $SL_n$.  This is the 
ind-scheme $SL_n(\mathcal{K})/SL_n(\mathcal{O})$ where $\mathcal{O}=\C[[t]]$
and $\mathcal{K}=\C[[t]][t^{-1}]$. One may obtain $\Gr$ as
a partial Kac-Moody flag ind-variety for the affine type $A_{n-1}^{(1)}$
root datum with Dynkin node set $I_\af = \Z/n\Z$ and parabolic Dynkin subset
$\Z/n\Z\setminus \{0+n\Z\}$. 

Let $\tS_n$ be the affine symmetric group. It has generators $s_i$ for $i\in\Z/n\Z$.
There is an embedding $\tS_n\to \hS_\Z$ given 
by $s_{j+n\Z}\mapsto\dotsm s_{j-n}s_j s_{j+n} s_{j+2n}\dotsm$ for $j\in\Z$. The maximal parabolic
subgroup of $\tS_n$ generated by $s_i$ for $i\in\Z/n\Z\setminus\{0\}$ is isomorphic to $S_n$.  The set $\tS_n^0 \subset \tS_n$ of affine Grassmannian permutations are the minimum length coset representatives in $\tS_n/S_n$.

The affine Grassmannian has a stratification by Schubert varieties $X_w$ labeled by $w \in \tS_n^0$.  Let $T \subset GL_n$ be the maximal torus of $GL_n$, which acts on $\Gr$.  The equivariant homology $H_T(\Gr)$ and equivariant cohomology $H^T(\Gr)$ have equivariant Schubert bases $\{[X_w]_T\}$ and $\{[X_w]^T\}$.  Furthermore, the inclusion of the based polynomial loops $\Omega_{{\rm pol}} SU(n) \hookrightarrow SL_n(\mathcal{K})$ induces a homeomorphism between $\Omega_{{\rm pol}} SU(n)$ and $\Gr$.  This endows $H_T(\Gr)$ and $H^T(\Gr)$ with the structure of dual Hopf algebras over $S = H^T(\pt)$, and the corresponding equivariant Schubert bases are dual bases.  In the following we shall identify $S$ with $\Q[a_i \mid i \in \Z/n\Z]$, and furthermore identify the simple root $\alpha_i$ with $a_{-i} - a_{1-i}$.  

There is a bijection $\la\mapsto w_\la^\af$ from the set of \defn{$(n-1)$-bounded partitions},
that is, those $\la\in\Y$ with $\la_1<n$, to $\tS_n^0$,
where $w_\la^\af$ is given by the same formula as \eqref{E:wla} except that
the reflection $s_j\in S_\Z$ for $j\in\Z$ is replaced by $s_{j+n\Z}\in \tS_n$. 
There is a bijection from $\tS_n^0$ to the set $\Core_n$ of $n$-cores,
denoted $w\mapsto \core(w)=\la\in\Y$ where $w_\la\in S_\Z^0$ is the unique element
such that $w\cdot \Z_{>0} = w_\la \cdot \Z_{>0}$, viewing $w$ as a permutation of $\Z$ via
$\tS_n\to\hS_\Z$.

The affine symmetric group $\tS_n$ is the semidirect product $S_n \ltimes Q^\vee$ where $Q^\vee \simeq \Z^{n-1} \simeq \{(\nu_1,\ldots,\nu_n) \mid \nu_1 + \nu_2 + \cdots + \nu_n = 0\}$ is the coroot lattice of $S_n$. The coroots $\alpha_{ij}^\vee$ for $1 \leq i\neq j \leq n$ are identified with the vector $\nu = e_i-e_j$ where $e_k$ denotes the usual basis vector.  For $\nu \in Q^\vee$, we let $t_\nu \in \tS_n$ denote the corresponding translation element, which is given by setting $t_\nu(i) = i+\nu_i\,n$ for $1 \leq i \leq n$.   For $u \in S_n$ and $\nu \in Q^\vee$, we have the relation $ut_\nu u^{-1} = t_{u \cdot \nu}$.

The real roots of $\tS_n$ are of the form $\tilde{\al} = m\delta + \alpha$, where $\alpha$ is a root of $S_n$ and $m \in \Z$.  The corresponding reflection is given by $s_{\tilde{\al}} = s_\alpha t_{m\alpha^\vee}$.  

We shall let $\tS_n$ act on $S$ by the \defn{level $0$}-action, under which translation elements act \textit{trivially}, and $S_n$ acts in the usual manner (apart from our identification $\alpha_i = a_{-i} - a_{1-i}$). 

The affine Grassmannian $\Gr$ is naturally a sub-ind-variety of the infinite Grassmannian $\Gr_\infty$.
Let $e_1,\dotsc,e_n$ be the standard $\mathcal{K}$-basis of the vector space $\mathcal{K}^n$, 
and define $e_{i+kn} := t^k e_i$ for
$1\le i\le n$ and $k\in\Z$. This identifies $\mathcal{K}^n$ with $\Cinf$.
Then $\Gr$ consists of those $V\in \Gr_\infty$ that define 
an $\mathcal{O}$-submodule of $\mathcal{K}^n$ of rank $n$ \cite{Lu}. The $T$-fixed points in $\Gr\subset\Gr_\infty$
are the points $e_I\in \Gr_\infty$ indexed by almost natural subsets $I\subset \Z$
with the additional condition (coming from the $\mathcal{O}$-module condition) 
that if $i\in I$ then $i+kn\in I$ for all $k\ge0$.
For $w\in \tS_n$ the set $I_w := w\cdot \Z_{>0}$ is 
again almost natural and $I_{wu}=I_w$ for $u\in S_n\subset \tS_n$.

Forgetting from the $T_\Z$ action on $\Cinf$ to that of $T$, we obtain
the forgetful $\Q$-algebra homomorphism $\For:H^{T_\Z}(\pt)\to H^T(\pt)$, given by
\begin{align}\label{E:Forget}
  \For(a_{i+kn}) = a_i \qquad\text{for $1\le i\le n$ and $k\in\Z$.}
\end{align}
We shall often view $S$ as the above quotient of $\Q[a]$ without mention.


\section{GKM rings}
In this section we introduce the GKM (Goresky-Kottwitz-Macpherson) rings for the equivariant cohomology rings $H^{T_\Z}(\Gr_\infty)$ and $H^T(\Gr)$.
 
\subsection{Infinite Grassmannian and the GKM ring $\Phi$}
Let $\Fun(S_\Z,\Q[a])$ be the $\Q[a]$-algebra
of functions $S_\Z\to\Q[a]$ under pointwise product $(fg)(w)=f(w)g(w)$
and scalar action $(qf)(w)=q \,f(w)$ for $q\in \Q[a]$, $f,g\in\Fun(S_\Z,\Q[a])$,
and $w\in S_\Z$.  Let $\hPhi$ be the set of $g\in \Fun(S_\Z,\Q[a])$ such that
\begin{align}
\label{E:GKMinf}
g(s_{ij}w)-g(w)&\in \alpha_{ij}\Q[a] &\qquad&\text{for all integers $i<j$ and $w\in S_\Z$} \\
\label{E:cosetinf}
g(wu) &= g(w) &&\text{for all $w\in S_\Z$ and $u\in \Snz$.}
\end{align}
It can be shown that $\hPhi$ is a $\Q[a]$-subalgebra of $\Fun(S_\Z,\Q[a])$.  The following results follow from the work of Kostant and Kumar \cite{KK}.

\begin{prop} \label{P:GKMGrinf} 
There are (Schubert basis) elements $\xi^v\in\hPhi$ for $v\in S_\Z^0$, uniquely defined by
the properties 
\begin{align}
\label{E:xisuppinf}
\xi^v(w)&=0\qquad\text{unless $v\le w$} \\
\xi^v(w) & \in \Q[a] \, \text{is homogeneous of degree $\ell(v)$} \\
\label{E:xidiaginf}
\xi^v(v)&=\prod_{\substack{i<j \\ s_{ij}v<v}} \alpha_{ij}.
\end{align}
\end{prop}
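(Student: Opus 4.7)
The plan is to deduce the proposition from the corresponding finite-rank results of Kostant and Kumar \cite{KK} by a stabilization argument, combined with a standard GKM divisibility lemma.

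For uniqueness I would consider the difference $\eta$ of two candidate solutions: then $\eta \in \hPhi$ is homogeneous of degree $\ell(v)$, vanishes on $\{w \mid w \not\geq v\}$, and has $\eta(v) = 0$. The engine is the divisibility lemma: whenever $\eta \in \hPhi$ vanishes at every $u < w$, the value $\eta(w)$ is divisible by $\prod_{s_{ij}w < w} \alpha_{ij}$, a product of degree $\ell(w)$. This follows by applying \eqref{E:GKMinf} to each pair $(w, s_{ij}w)$ with $s_{ij}w < w$ and using that distinct type-$A$ roots $a_{1-j} - a_{1-i}$ are pairwise coprime in $\Q[a]$. A homogeneous element of degree $\ell(v) < \ell(w)$ that is divisible by a polynomial of degree $\ell(w)$ must vanish, so an induction on $\ell(w) - \ell(v) \geq 0$ with base case $w = v$ forces $\eta \equiv 0$.

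For existence I would fix $v$ and choose a finite symmetric subgroup $H \subset S_\Z$ containing $v$ together with every reflection $s_{ij}$ for which $s_{ij}v < v$. Kostant-Kumar applied to the corresponding finite type-$A$ root data with parabolic $\Snz \cap H$ produces a function $\xi^v_H$ on $H$ satisfying the finite analogues of \eqref{E:xisuppinf}--\eqref{E:xidiaginf}. For $w \in S_\Z$ I would then choose a larger finite subgroup $H' \supset H$ containing $w$ and set $\xi^v(w) := \xi^v_{H'}(w)$. Once this assignment is well-defined, both the GKM-type relations \eqref{E:GKMinf}--\eqref{E:cosetinf} and the three listed properties transfer directly from their finite-rank counterparts, since each is local in $w$ and the set $\{s_{ij} \mid s_{ij}v < v\}$ is intrinsic to $v$.

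The main obstacle is the well-definedness, i.e.\ stability of $\xi^v_{H'}(w)$ as $H'$ is enlarged. I expect to settle this by observing that the finite-rank analogue of the uniqueness argument above pins down $\xi^v_{H'}$ as the unique function on $H'$ with the prescribed support, homogeneity, and diagonal value at $v$. The restriction to $H'$ of the Kostant-Kumar class living on a still larger subgroup $H'' \supset H'$ visibly satisfies these defining properties (the lower Bruhat interval of $v$, its inversion roots, and the relevant reflections are all intrinsic to $v$ once the window is large), and must therefore equal $\xi^v_{H'}$, yielding the required stabilization.
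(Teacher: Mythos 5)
Your argument is correct, but it is not the route the paper takes: the paper offers no proof of this proposition at all, simply asserting that it ``follows from the work of Kostant and Kumar,'' i.e.\ by applying the Kac--Moody nilHecke/GKM machinery of \cite{KK} directly to the bi-infinite type $A$ root datum whose Weyl group is $S_\Z$. Your uniqueness half is the standard divisibility-plus-degree argument and is essentially the same computation the paper does spell out for the small-torus affine analogue in Proposition \ref{P:GKMGrAff} (there complicated by the conditions \eqref{E:smalltorusaffGr}--\eqref{E:smalltorusaffflags}; here the big-torus condition \eqref{E:GKMinf} makes it immediate, since the $\ell(w)$ left inversion roots of $w$ are pairwise coprime linear forms). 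Your existence half is genuinely different: instead of invoking Kostant--Kumar for the infinite-rank Kac--Moody datum, you build $\xi^v$ as a stable limit of finite-rank classes over an exhaustion of $S_\Z$ by finite standard parabolics $H$, using the finite uniqueness statement to force compatibility of restrictions. This works --- the points that make it go through are that $H'$ is a standard parabolic of $H''$ so Bruhat order and length restrict correctly, and that the left inversion set of $v$ is contained in any window containing the support of $v$, so the normalization \eqref{E:xidiaginf} is window-independent --- and it buys a self-contained proof resting only on finite-dimensional Schubert calculus (equivalently, stability of double Schubert polynomials), at the cost of the bookkeeping about well-definedness. The paper's citation is shorter but requires accepting the full Kac--Moody formalism for $\Gr_\infty$.
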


One has $\hPhi = \prod_{v\in S_\Z^0} \Q[a] \xi^v$, and one defines
$$
\Phi = \bigoplus_{v\in S_\Z^0} \Q[a] \xi^v.
$$


For $\mu\in\Y$ let $i_\mu:\{\pt\}\to \Gr_\infty$ be the inclusion of the $T_\Z$-fixed point $e_{I_\mu}$.  If $c \in H^{T_\Z}(\Gr_\infty)$ is any equivariant cohomology class, then $i_\mu^*(c) \in \Q[a]$.
\begin{theorem}\label{T:GKM}
There is an isomorphism of $\Q[a]$-algebras
\begin{align*}
  H^{T_\Z}(\Gr_\infty) &\cong \Phi \\
\notag  c &\mapsto (w_\mu u \mapsto i_\mu^*(c)\mid \mu \in\Y, u\in \Snz) \\
\notag [X_{w_\la}]^T&\mapsto \xi^{w_\la}\qquad\text{for $\la\in \Y$}
\end{align*}
\end{theorem}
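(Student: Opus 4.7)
The plan is to deduce this from the Kac--Moody GKM/equivariant localization machinery of Kostant--Kumar \cite{KK}. The $T_\Z$-fixed points of $\Gr_\infty$ are precisely the points $e_{I_\mu}$ for $\mu \in \Y$, and via the bijection $S_\Z^0 \leftrightarrow \Y$ of \eqref{E:law}--\eqref{E:wla} these correspond to the coset space $S_\Z/\Snz$. Collecting the pullbacks $i_\mu^*$ gives a $\Q[a]$-algebra homomorphism
\begin{equation*}
\Psi: H^{T_\Z}(\Gr_\infty) \longrightarrow \Fun(S_\Z, \Q[a]), \qquad c \longmapsto \bigl(w_\mu u \mapsto i_\mu^*(c)\bigr),
\end{equation*}
which is well-defined since each $w \in S_\Z$ admits a unique decomposition $w = w_\mu u$ with $w_\mu \in S_\Z^0$ and $u \in \Snz$; the image automatically satisfies \eqref{E:cosetinf}.

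First I would show that $\Psi$ is injective. Since $H^{T_\Z}(\Gr_\infty)$ is a free $\Q[a]$-module with Schubert basis and each finite-dimensional Schubert subvariety has only isolated $T_\Z$-fixed points, the standard equivariant localization theorem, applied in the ind-variety setting as the direct limit over the exhaustion of $\Gr_\infty$ by finite-type Grassmannians, gives injectivity. Next I would verify the GKM divisibility \eqref{E:GKMinf}: for any $w \in S_\Z$ and any reflection $s_{ij}$, the closure of the one-dimensional $T_\Z$-orbit joining $e_{I_w}$ and $e_{I_{s_{ij}w}}$ is a projective line with tangent weight $\pm\alpha_{ij}$ at each end, so the difference $g(w) - g(s_{ij}w)$ lies in $\alpha_{ij}\Q[a]$ for $g = \Psi(c)$ by the usual localization argument on $\B^1$. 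Thus $\Psi$ factors through $\hPhi$, and a degree-boundedness argument on each Schubert basis element shows the image is in fact contained in the subring $\Phi$.

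Finally, to identify $\Psi([X_{w_\la}]^T)$ with $\xi^{w_\la}$, I would check the three defining properties in Proposition \ref{P:GKMGrinf}. The support property follows from the standard fact that $e_{I_\mu} \in X_v$ if and only if $v \le w_\mu$ in Bruhat order, a consequence of the Bruhat stratification of $\Gr_\infty$. Homogeneity of degree $\ell(v)$ is immediate from the cohomological grading. The diagonal value $i_v^*([X_v]^T)$ equals the equivariant Euler class of the normal bundle of $X_v$ at the smooth point $e_{I_v} \in X_v^\circ$, which is the product of cotangent weights at that point; these are precisely the roots $\alpha_{ij}$ with $s_{ij}v < v$, matching \eqref{E:xidiaginf}. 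The uniqueness clause in Proposition \ref{P:GKMGrinf} then forces $\Psi([X_{w_\la}]^T) = \xi^{w_\la}$, and surjectivity onto $\Phi$ follows since $\{\xi^{w_\la}\}$ is a $\Q[a]$-basis. The main technical obstacle is the tangent/cotangent weight computation at each $T_\Z$-fixed point, which requires careful bookkeeping through \eqref{E:law}--\eqref{E:rhoij}, but reduces to the well-known finite-dimensional flag variety calculation since each Schubert variety $X_v$ is finite-dimensional projective.
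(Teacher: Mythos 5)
Your proposal is correct and follows essentially the same route as the paper, which offers no written proof of Theorem~\ref{T:GKM} but simply attributes it to the Kostant--Kumar/GKM localization machinery; your argument (injectivity of restriction to fixed points, the $T_\Z$-curve divisibility giving the image in $\hPhi$, and matching $i_\mu^*([X_{w_\la}]^T)$ against the three characterizing properties of $\xi^{w_\la}$ in Proposition~\ref{P:GKMGrinf}) is precisely the standard argument behind that citation. The only cosmetic caveat is the phrase ``Euler class of the normal bundle of $X_v$'': since here $X_v$ has \emph{dimension} $\ell(v)$ inside the ind-variety, the diagonal value is more accurately the product of the weights of the ($\ell(v)$-dimensional) tangent space to the cell $X_v^\circ$ at $e_{I_v}$, i.e.\ the left inversion set of $v$, which is exactly what \eqref{E:xidiaginf} records.
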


\subsection{The affine Grassmannian and GKM ring $\Phi_\Gr$}
Recall that we identify $\alpha_{ij}$ with $a_{1-j} - a_{1-i} \in S$.
Let $\Fun(\tS_n,S)$ be the $S$-algebra of functions $\tS_n\to S$ under pointwise product.  
Let $\hPhi_\Fl$ be the subset of $f\in \Fun(\tS_n,S)$ such that for all $w\in \tS_n$ we have
\begin{align}
\label{E:smalltorusaffGr}
  f((1-t_{\alpha^\vee_{ij}})^d w) &\in \alpha_{ij}^d\,S &&\text{and} \\
\label{E:smalltorusaffflags}
  f((1-t_{\alpha^\vee_{ij}})^{d-1}(1-s_{ij})w)&\in \alpha_{ij}^d\,S&&\text{for $1\le i \neq j\le n$ and $d>0$.}
\end{align}
where $f$ is formally left $S$-linear: $f(\sum_{w\in \tS_n} c_w w) := \sum_{w\in \tS_n} c_w f(w)$
for finite sums with $c_w\in S$.  These conditions were introduced in \cite{GKM2}.
Let $\hPhi_\Gr$ be the subset of $f\in \hPhi_\Fl$ such that
\begin{align}
\label{E:cosetaff}
  f(wu) &= f(w) &&\text{for $u\in S_n$}.
\end{align}
\begin{rem} \label{R:smalltorusGrass}
Functions $f\in \Fun(\tS_n,S)$ that satisfy \eqref{E:smalltorusaffGr} and \eqref{E:cosetaff}
automatically satisfy \eqref{E:smalltorusaffflags}.
\end{rem}

\begin{prop}\label{P:GKMGrAff}
There are elements $\xi^v\in\hPhi_\Gr$ (resp. $\hPhi_\Fl$) for $v \in \tS_n^0$ (resp. $v\in \tS_n$), uniquely defined by
the properties 
\begin{align}
\label{E:xisuppGraff}
\xi^v(w)&=0\qquad\text{unless $v\le w$} \\
\label{E:xihom}
\xi^v(w) &\in S \,\text{is homogeneous of degree $\ell(v)$ for all $w \in \tS_n^0$ (resp. $w \in\tS_n$} \\
\label{E:xidiagGraff}
\xi^v(v)&=\prod_{\substack{\tilde{\al} \\ s_{\tilde{\al}}v <v}} \al 
\end{align}
where the product runs over the positive real affine roots $\tilde{\al}=\pm\al_{ij}+k\delta$ with the given descent property
and $\al=\pm \al_{ij}$ is the associated finite root. Moreover 
\begin{align}
\label{E:Grassbasis}
  \hPhi_\Gr &= \prod_{v\in \tS_n^0} S\xi^v\\
\label{E:flagbasis}
  \hPhi_\Fl &= \prod_{v\in\tS_n} S \xi^v.
\end{align}
\end{prop}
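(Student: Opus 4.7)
The plan is to run a GKM-style argument in parallel with Proposition~\ref{P:GKMGrinf}, but with the small-torus divisibility conditions \eqref{E:smalltorusaffGr}--\eqref{E:smalltorusaffflags} in place of the usual ones \eqref{E:GKMinf}. The cleanest route to existence is to invoke the small-torus GKM isomorphism of Goresky--Kottwitz--MacPherson \cite{GKM2}: under $H^T(\Fl) \cong \hPhi_\Fl$ (resp.\ $H^T(\Gr) \cong \hPhi_\Gr$), the function $\xi^v$ is the localization image $(w \mapsto \iota_w^*[X_v]^T)$ of the equivariant Schubert class $[X_v]^T$. The support condition \eqref{E:xisuppGraff} is then the standard vanishing of Schubert localizations at fixed points outside the closure; the homogeneity \eqref{E:xihom} reflects $\deg[X_v]^T = \ell(v)$; and the diagonal value \eqref{E:xidiagGraff} is the equivariant self-intersection formula, expressing $\iota_v^*[X_v]^T$ as the product of the $T$-weights in the normal bundle, which are exactly the (finite parts of the) positive real affine roots $\tilde\alpha$ with $s_{\tilde\alpha}v < v$. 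The coset-invariance \eqref{E:cosetaff} for $v \in \tS_n^0$ is the statement that $[X_v]^T$ pulls back from $\Gr$ to $\Fl$.

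For uniqueness, given any $f \in \hPhi_\Fl$ satisfying the three properties for $v$, one argues by induction on Bruhat order: if $f(u)$ is known for all $u < w$, then specializing conditions \eqref{E:smalltorusaffGr}--\eqref{E:smalltorusaffflags} at appropriate elements of $\tS_n$ produces congruences on $f(w)$ modulo each $\alpha^d S$. Together with the homogeneity bound $\deg f(w) = \ell(v)$ and the vanishing $f(w) = 0$ unless $v \le w$, these congruences pin down $f(w) - \xi^v(w)$ as a homogeneous element of degree $\ell(v)$ that vanishes modulo sufficiently many $\alpha^d$, hence is zero. The base case $f(v) = \xi^v(v)$ is given. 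For the basis decompositions \eqref{E:Grassbasis}--\eqref{E:flagbasis}, given any $g \in \hPhi_\Fl$, let $v$ be minimal in $\mathrm{Supp}(g)$ under Bruhat order; the GKM conditions at $v$ force $g(v)$ to be divisible by $\xi^v(v) = \prod_{s_{\tilde\alpha}v<v} \alpha$, so $g - (g(v)/\xi^v(v))\,\xi^v$ has strictly smaller support, and (transfinitely) iterating yields the expansion $g = \sum_{v} c_v \xi^v$ with $c_v \in S$. Restricting to right $S_n$-invariant functions and to $v \in \tS_n^0$ gives the $\hPhi_\Gr$ statement.

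The main technical obstacle is verifying the divisibility \emph{inside} $\hPhi_\Fl$, that is, that the Schubert classes indeed define functions obeying the small-torus conditions, as these conditions involve both reflections and translations and the powers $\alpha^d$ are subtler than the single-power congruences of the classical GKM case. This is precisely the content borrowed from \cite{GKM2}; once it is in hand, the uniqueness and basis steps above are formal manipulations with Bruhat order and degree. The divisibility of $g(v)$ by $\xi^v(v)$, used in the basis argument, requires knowing that for each positive real affine root $\tilde\alpha$ with $s_{\tilde\alpha}v<v$ one can isolate the factor $\alpha$ from the appropriate specialization of \eqref{E:smalltorusaffGr} or \eqref{E:smalltorusaffflags}; this in turn relies on Remark~\ref{R:smalltorusGrass} for the $\hPhi_\Gr$ case, so that the simpler condition \eqref{E:smalltorusaffGr} suffices there.
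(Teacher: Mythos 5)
Your proposal is correct and follows essentially the same route as the paper: existence is obtained by citing the geometric construction (the paper specializes the Kostant--Kumar basis from the affine torus to the small torus, while you invoke the \cite{GKM2} localization of Schubert classes --- the same content), and uniqueness is proved by showing that at a minimal element $w$ of the support the divisibility conditions \eqref{E:smalltorusaffGr} force a divisor of total degree $\ell(w)>\ell(v)$, contradicting homogeneity. Your leading-term-stripping argument for \eqref{E:Grassbasis}--\eqref{E:flagbasis} is likewise the standard one that the paper delegates to \cite{LSS}.
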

\begin{proof}
Existence follows by taking the basis constructed by Kostant and Kumar \cite{KK}, which consists of functions taking values in $H^{T_\af}(\pt)$, where $T_\af$ denotes the affine torus, and specializing the functions to the finite (small) torus.  

We now prove uniqueness for the affine Grassmannian case.  Suppose $\xi, \xi' \in \hPhi_\Gr$ both satisfy the stated properties for some $v \in \tS_n^0$.  Then $\psi = \xi-\xi'$ is supported strictly above $v$ and satisfies \eqref{E:xihom}.  Suppose $\psi(w) \neq 0$ for some $w \in \tS_n^0$ satisfying $w > v$, and such that $w$ is minimal within the support of $\psi$.  Write $w = ut_\la$ for $u \in S_n$ and $\la \in Q^\vee$.  For each positive root $\alpha$ of $S_n$, we shall show that $(u\alpha)^{m_\alpha}$ divides $\psi(w)$, where for $\tilde{\al} \in\{\delta-\alpha,2\delta-\alpha,\ldots,m_\alpha \delta-\alpha\}$ one has $ws_{\tilde{\al}} < w$.  Note that since $w \in \tS_n^0$, all of the right inversions of $w$ are of this form.  Since the $(u\alpha)$ are all relatively prime, this shows that $\psi(w)$ has degree at least $\sum_{\alpha} m_\alpha = \ell(w) > \ell(v)$, contradicting the assumption.  Apply \eqref{E:smalltorusaffGr} with $\alpha_{ij} = u\alpha$  and note that by assumption
$$
\psi(t_{k(u\alpha^\vee)}w) = \psi(w t_{k\alpha^\vee}) = \psi(w t_{k\alpha^\vee} s_{\alpha}) = \psi(w s_{k\delta-\alpha^\vee}) = 0
$$
for $k = 1,2,\ldots,m_\alpha$.  Thus $\psi((1-t_{u\alpha^\vee})^{m_\alpha} w) = \psi(w)$ is divisible by $(u\alpha)^{m_\alpha}$, as claimed.

For the case of $\hPhi_\Fl$, the strategy is the same but the computation is significantly more involved.  We shall not include the complete proof here but refer the reader to the proof of \cite[Theorem 4.3]{LSS} for an analogous calculation in the $K$-theoretic setting.

Equations \eqref{E:flagbasis} and \eqref{E:Grassbasis} are proved using a similar technique; see again \cite[Theorem 4.3]{LSS}.

%
\end{proof}

Define
\begin{align} \label{E:Phiaffdef}
  \Phi_\Fl &= \bigoplus_{v\in\tS_n} S\xi^v \\
  \Phi_\Gr &= \bigoplus_{v\in\tS_n^0} S\xi^v.
\end{align}

As before, for $w\in \tS_n^0$ we write $i_{\core(w)}:\{\pt\}\to\Gr$ for the
map with image $e_{\core(w)}\in \Gr$.  The isomorphism in the following result is due to Goresky-Kottwitz-Macpherson \cite{GKM2}; the identification of the Schubert basis follows from Kostant-Kumar \cite{KK}.

\begin{theorem}\label{T:GKMaff}
There is an $S$-algebra isomorphism
\begin{align*}
  H^T(\Gr) &\cong \Phi_\Gr \\
  c &\mapsto (w u\mapsto i_{\core(w)}^*(c))&\qquad&\text{for $w\in\tS_n^0$ and $u\in S_n$} \\
  [X_v]^T&\mapsto \xi^v&&\text{for $v\in \tS_n^0$.}
\end{align*}
\end{theorem}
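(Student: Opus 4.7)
The plan is to produce the isomorphism as the localization map $\Psi : H^T(\Gr) \to \Fun(\tS_n, S)$ defined by $\Psi(c)(wu) = i_{\core(w)}^*(c)$ for $w \in \tS_n^0$ and $u \in S_n$, and then to identify the image of each Schubert class $[X_v]^T$ as $\xi^v$ by verifying the three defining properties of Proposition~\ref{P:GKMGrAff}. First I would check that $\Psi$ is well-defined: since $S_n \subset \tS_n$ stabilizes $\Z_{>0}$ setwise, we have $wu \cdot \Z_{>0} = w \cdot \Z_{>0}$, so the value at $wu$ only depends on the coset $w S_n$, which automatically yields \eqref{E:cosetaff}.

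Next, I would establish that $\Psi$ is an injective $S$-algebra map with image contained in $\hPhi_\Gr$. Injectivity follows from the Atiyah--Bott--Borel localization theorem, using that $T$ acts with isolated fixed points $\{e_{\core(w)} : w \in \tS_n^0\}$ and that $H^T(\Gr)$ is a free $S$-module on the Schubert basis. The small-torus divisibility condition \eqref{E:smalltorusaffGr} is precisely what Goresky--Kottwitz--Macpherson establish in \cite{GKM2}, and by Remark~\ref{R:smalltorusGrass} together with \eqref{E:cosetaff} the condition \eqref{E:smalltorusaffflags} follows automatically. Thus $\Psi$ lands in $\hPhi_\Gr$.

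Now I would identify $\Psi([X_v]^T)$ with $\xi^v$ by verifying the three characterizing properties from Proposition~\ref{P:GKMGrAff}. Property \eqref{E:xisuppGraff}: the fixed point $e_{\core(w)}$ lies in $X_v$ only when $w \geq v$ (for $w \in \tS_n^0$), so $i_{\core(w)}^*([X_v]^T) = 0$ unless $v \leq w$. Property \eqref{E:xihom}: $[X_v]^T$ has cohomological degree equal to the complex codimension of $X_v$ in $\Gr$, which is $\ell(v)$. Property \eqref{E:xidiagGraff}: the self-localization at $e_{\core(v)}$ is the Euler class of the normal bundle to $X_v$ at the smooth fixed point, which by the standard Kostant--Kumar calculation \cite{KK} equals the product of the (finite) roots $\alpha$ associated to the positive real affine roots $\tilde\alpha$ with $s_{\tilde\alpha} v < v$. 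Uniqueness in Proposition~\ref{P:GKMGrAff} then forces $\Psi([X_v]^T) = \xi^v$.

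Finally, since $\{[X_v]^T : v \in \tS_n^0\}$ is an $S$-basis of $H^T(\Gr)$ and $\{\xi^v : v \in \tS_n^0\}$ is an $S$-basis of $\Phi_\Gr \subset \hPhi_\Gr$ by \eqref{E:Grassbasis}, the map $\Psi$ is an $S$-algebra isomorphism onto $\Phi_\Gr$. The main obstacle is the input from \cite{GKM2} giving the small-torus divisibility conditions; once that is in hand, the rest is a straightforward assembly of uniqueness (Proposition~\ref{P:GKMGrAff}), the Schubert localization formulas of Kostant--Kumar, and dimension/basis counting.
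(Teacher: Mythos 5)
Your proposal is correct and follows essentially the same route as the paper, which gives no independent argument but attributes the isomorphism to \cite{GKM2} and the Schubert-basis identification to \cite{KK}; your assembly of the localization map, the GKM2 divisibility input, the Kostant--Kumar localization formulas, the uniqueness in Proposition~\ref{P:GKMGrAff}, and the basis count is exactly the intended proof. The only nitpick is that in the Kac--Moody setting $X_v$ is finite-dimensional, so the diagonal value $\xi^v(v)$ is the product of the tangent weights of the cell $X_v^\circ$ at its fixed point (equivalently the weights $\alpha$ with $s_{\tilde\alpha}v<v$) rather than literally an Euler class of a normal bundle, but this does not affect the argument.
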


\begin{rem} There is an $S$-algebra isomorphism $H^T(\Fl) \cong \Phi_\Fl$ where $\Fl$ is
the affine flag ind-variety of $SL_n$.
\end{rem}

For $1\le r\le n-1$, we define $\rho^r = \rho^r_0 = w_{(r)}^\af = s_{r-1}\dotsm s_1s_0 \in\tS_n^0$.

\begin{prop} \label{P:cohomgen}
The $S$-algebra $\Phi_\Gr$ is generated over $S$ by $\xi^{\rho^r}$ for $1\le r\le n-1$.
\end{prop}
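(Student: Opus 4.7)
The plan is to show by induction on length that every Schubert class $\xi^v$ with $v\in\tS_n^0$ lies in the $S$-subalgebra $A\subseteq\Phi_\Gr$ generated by $\xi^{\rho^1},\ldots,\xi^{\rho^{n-1}}$; by \eqref{E:Grassbasis} this yields $A=\Phi_\Gr$. The essential external input I will invoke is the classical non-equivariant fact that the rational cohomology ring $H^*(\Gr)$ is generated as a $\Q$-algebra by the degree-two special Schubert classes $\overline{\xi^{\rho^r}}$, $1\le r\le n-1$ (Bott's computation of $H^*(\Omega SU(n))$). Combined with Theorem \ref{T:GKMaff} and reduction modulo the maximal ideal $\mathfrak{m}=(a_i\mid i\in\Z/n\Z)\subset S$, it translates into the statement that the images $\overline{\xi^{\rho^r}}$ generate $\Phi_\Gr/\mathfrak{m}\Phi_\Gr$ as a $\Q$-algebra.

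Fix $v\in\tS_n^0$ with $d=\ell(v)\ge 1$. Choose a polynomial $p_v\in\Q[t_1,\dots,t_{n-1}]$, homogeneous of weight $d$ when $t_r$ is assigned weight $r$, such that $p_v(\overline{\xi^{\rho^1}},\ldots,\overline{\xi^{\rho^{n-1}}})=\overline{\xi^v}$ in $\Phi_\Gr/\mathfrak{m}\Phi_\Gr$. Setting $P_v:=p_v(\xi^{\rho^1},\ldots,\xi^{\rho^{n-1}})\in A$ and expanding in the Schubert basis,
$$
P_v=\sum_{u\in\tS_n^0} c_u\,\xi^u, \qquad c_u\in S,
$$
$P_v$ is homogeneous of degree $d$. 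By \eqref{E:xihom} each $\xi^u$ is homogeneous of degree $\ell(u)$ and each $a_i$ has degree $1$, so $c_u=0$ whenever $\ell(u)>d$, and $\deg c_u=d-\ell(u)$ otherwise. Reducing modulo $\mathfrak{m}$ produces $\overline{P_v}=\overline{\xi^v}$, and uniqueness of the Schubert expansion then forces $c_v=1$, $c_u=0$ for $u\ne v$ with $\ell(u)=d$, and $c_u\in\mathfrak{m}$ when $\ell(u)<d$.

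Rearranging gives
$$
\xi^v=P_v-\sum_{u\in\tS_n^0,\,\ell(u)<d} c_u\,\xi^u.
$$
By the inductive hypothesis each $\xi^u$ with $\ell(u)<d$ lies in $A$, and $c_u\in S\subseteq A$, so $\xi^v\in A$; the base case is $\xi^e=1\in A$. The main obstacle is external, namely locating and invoking the classical non-equivariant generation of $H^*(\Gr_{SL_n})$ by the special Schubert classes $\overline{\xi^{\rho^r}}$; once that is in hand, the equivariant step is a standard graded-Nakayama argument whose only subtlety is the degree bookkeeping above.
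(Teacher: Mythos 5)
Your proof is correct and follows essentially the same route as the paper: establish that the non-equivariant special classes generate $H^*(\Gr)$ over $\Q$, then lift generation to $\Phi_\Gr$ by induction on length using the grading (the paper phrases your graded-Nakayama step via specialization of structure constants at $a_i=0$, and derives the non-equivariant input from $H^*(\Gr)\simeq\Lambda^{(n)}$ with $h_r\mapsto\xi^{\rho^r}_0$ rather than citing Bott, but the content is identical). One cosmetic slip: the classes $\overline{\xi^{\rho^r}}$ have cohomological degree $2r$, not two.
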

\begin{proof}
We first claim that the non-equivariant cohomology ring $H^*(\Gr)$ is generated over $\Q$ by the non-equivariant special classes $\xi^{\rho^r}_0$.  To see this, recall (for example \cite{Lam}) that $H^*(\Gr) \simeq \Lambda^{(n)} := \Lambda/\langle m_\lambda \mid \lambda_1 \geq n \rangle$, where $\Lambda$ denotes the usual ring of symmetric functions and $m_\la$ denotes the usual monomial symmetric functions.  It is clear that the power sums $p_1,p_2,\ldots,p_{n-1}$ generate $\Lambda^{(n)}$ over $\Q$, so it follows that the homogeneous symmetric functions $h_1,h_2,\ldots,h_{n-1}$ do as well.  It is shown in \cite{Lam} that $h_i$ is identified with $\xi^{\rho_r}_0$ under the isomorphism $H^*(\Gr) \simeq \Lambda^{(n)}$, giving the claim.

Suppose $\xi^w_0 = p(\xi^{\rho^1}_0,\xi^{\rho^2}_0,\ldots,\xi^{\rho^{n-1}}_0)$ for some polynomial $p$ with $\Q$-coefficients.  It is known that the structure constants in the equivariant Schubert basis $\{\xi^{w} \mid w \in \tS_n^0\}$ of $\Phi_\Gr \simeq H^T(\Gr)$ specialize via $a_i \mapsto 0$ to the structure constants in the non-equivariant Schubert basis $\{\xi^{w}_0 \mid w \in \tS_n^0\}$.  It follows from this and the obvious grading of $\Phi_\Gr$ that 
$$
p(\xi^{\rho^1},\xi^{\rho^2},\ldots,\xi^{\rho^{n-1}}) = \xi^w + \sum_{v \; : \; \ell(v) < \ell(w)}a_v\,\xi^v
$$
for some coefficients $a_v \in S$.  By induction, one obtains that each $\xi^w$ can be written as a polynomial in the $\xi^{\rho_i}$ with $S$-coefficients, proving the claim.
\end{proof}

\section{The Hopf algebra of double symmetric functions}
\label{sec:Molev}

Let $\Lambda(x\|a)$ be the Hopf algebra of \defn{double symmetric functions} 
over $\Q[a]$ \cite{Mol}.  By definition it is the polynomial algebra over $\Q[a]$ generated by the algebraically independent primitive elements
\begin{align}\label{E:primgen}
p_r[x-\ap] = \sum_{i\ge1} (x_i^r - a_i^r)\qquad\text{for $r\ge1$}
\end{align}
where $\ap=(a_1,a_2,\dotsc)$.

\subsection{Limit construction}
\label{SS:limit}
Let $P_n=\Q[a][x_1,\dotsc,x_n]^{S_n}$ be the $\Q[a]$-Hopf algebra
of symmetric polynomials in $x_1,\dotsc,x_n$. 
The coproduct is defined by declaring that the elements
\begin{align}
p_r[(x_1+x_2+\dotsm+x_n)-(a_1+a_2+\dotsm+a_n)] = \sum_{i=1}^n (x_i^r-a_i^r)
\end{align}
are primitive.

For $m<n$ let $p_m^n:P_n\to P_m$
be the $\Q[a]$-Hopf homomorphism defined by setting $x_k=a_k$ for $m<k\le n$.
Then $\La(x\|a)$ is the projective limit 
of the $P_n$ with $p_n^\infty:\La(x\|a)\to P_n$ given by setting $x_k=a_k$ for $k>n$.

\subsection{Automorphisms}
Let $\tau$ be the $\Q$-algebra automorphisms of $\Q[a]$
and of $\La(x\|a)$ given by 
\begin{align}
\label{E:tauvar}
\tau(a_i) &= a_{i+1}&\qquad&\text{for $i\in\Z$} \\
\label{E:tausym}
\tau(p_r[x-\ap]) &= p_r[x-\ap] + a_1^r&&\text{for $r\ge1$.}
\end{align}

\subsection{Double Schur functions}
Let $A$ and $B$ be countable sets of variables.
Define the homogeneous symmetric functions $h_r[A-B]$ by
\begin{align}
\label{E:h}
  \sum\limits_{r\ge0} t^r h_r[A-B] &= \dfrac{\prod_{b\in B} 1-bt}{\prod_{a\in A} 1-at}.
\end{align}
Let $\Y$ be Young's lattice of partitions.
The \defn{double complete symmetric functions} $\{h_\la(x\|a)\mid \la\in\Y\}$ are the
$\Q[a]$-basis of $\La(x\|a)$ defined by
\begin{align}
\label{E:doublehone}
  h_r(x\|a) &= \tau^{1-r} h_r[x-\ap] \\
\notag
  &= h_r[x-\ap-a_0-a_1-\dotsm-a_{2-r}] \\
\label{E:doubleh}
  h_\la(x\|a) &= h_{\la_1}(x\|a) h_{\la_2}(x\|a)\dotsm.
\end{align}
$\La(x\|a)$ has $\Q[a]$-bases given by the 
\defn{double Schur functions} $\{s_\la(x\|a)\mid \la\in\Y\}$
and \defn{Schur functions} $\{s_\la[x-\ap]\mid \la\in\Y\}$:
\begin{align}
  s_\la(x\|a) &= \det (\tau^{j-1} h_{\la_i-i+j}(x\| a))_{1\le i,j\le \ell(\la)} \\
  s_\la[x-\ap] &= \det (h_{\la_i-i+j}[x-\ap])_{1\le i,j\le \ell(\la)}.
\end{align}

It can be shown \cite[Thm 3.20]{Mol} that 
\begin{align}\label{E:doubleSchurtoSchur}
  s_\la(x\|a) \in s_\la[x-\ap] + \sum_{\mu\subsetneq\la} \Z[a_{2-\la_1},\dotsc,a_{\ell(\la)-1}] s_\mu[x-\ap].
\end{align}

\subsection{Symmetric function ring for $\Phi$}
\label{SS:symlocinf}
Let $w\in S_\Z$. There is a $\Q[a]$-algebra homomorphism $\epsilon_w:\La(x\|a)\to \Q[a]$ defined by
the substitution $x_i\mapsto a_{1-w(1-i)}$ for all $i>0$, or more formally,
\begin{align}\label{E:epsilonpower}
  \epsilon_w(p_r[x-\ap]) = \sum_{i\in \Z_{\le0}\cap (1-w\cdot\Z_{\leq 0})} a_{i}^r - \sum_{i\in \Z_{>0}\setminus (1-w\cdot\Z_{\leq 0})} a_{i}^r.
\end{align}

Define the map $\epsilon: \La(x\|a) \to \Fun(S_\Z,\Q[a])$ defined by
\begin{align}\label{E:epsilon}
  \epsilon(f)(w) = \epsilon_w(f).
\end{align}

\begin{thm} \label{T:thminfloc}
$\epsilon$ is a $\Q[a]$-algebra isomorphism
$\La(x\|a) \cong \Phi$ sending $s_\la(x\|a)$ to $\xi^{w_\la}$.
\end{thm}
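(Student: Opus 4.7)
\smallskip

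\noindent\textbf{Proof plan.} The strategy is to verify that $\epsilon$ lands in $\Phi$, identify the image of each double Schur function with a specific Schubert element, and conclude the isomorphism from a basis-to-basis match.

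First I would note that $\epsilon$ is automatically a $\Q[a]$-algebra map into $\Fun(S_\Z,\Q[a])$, since each $\epsilon_w$ is a $\Q[a]$-algebra homomorphism and $\Fun$ carries the pointwise product. The coset condition \eqref{E:cosetinf} is immediate: $\Snz$ stabilizes $\Z_{>0}$ and $\Z_{\le0}$, so for $u\in\Snz$ one has $1-wu\cdot\Z_{\le0}=1-w\cdot\Z_{\le0}$, forcing $\epsilon_{wu}=\epsilon_w$. To check the GKM divisibility \eqref{E:GKMinf}, it suffices to evaluate it on the primitive generators $p_r[x-\ap]$. By \eqref{E:epsilonpower}, passing from $w$ to $s_{ij}w$ alters the set $1-w\cdot\Z_{\le0}$ in at most the two entries $1-i$ and $1-j$, and a short case check shows that $\epsilon_{s_{ij}w}(p_r[x-\ap])-\epsilon_w(p_r[x-\ap])$ is either $0$ or $\pm(a_{1-j}^r-a_{1-i}^r)$, both of which are divisible by $\alpha_{ij}=a_{1-j}-a_{1-i}$.

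To confirm $\epsilon(f)\in\Phi$ rather than merely $\hPhi$, I would argue by grading: giving each $x_i$ and each $a_j$ degree $1$, the map $\epsilon_w$ preserves degree while $\xi^v(w)$ has degree $\ell(v)$, so a homogeneous element of degree $r$ has support among the finitely many $\xi^v$ with $\ell(v)\le r$, placing the image in the direct sum.

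The heart of the argument is to show $\epsilon(s_\la(x\|a))=\xi^{w_\la}$ by matching the three characterizing properties in Proposition~\ref{P:GKMGrinf}. Using \eqref{E:law}, the substitution $\epsilon_{w_\mu}$ is $x_i\mapsto a_{i-\mu_i}$ for $w_\mu\in S_\Z^0$, and the coset condition reduces evaluation to such $w_\mu$. Homogeneity of degree $|\la|=\ell(w_\la)$ is clear from the grading above. The support property $\epsilon_{w_\mu}(s_\la(x\|a))=0$ unless $w_\la\le w_\mu$ translates, via the order isomorphism $w_\la\mapsto\la$ from $(S_\Z^0,\le)$ to $(\Y,\subseteq)$, into the vanishing of $s_\la(x\|a)$ at $x_i=a_{i-\mu_i}$ unless $\la\subseteq\mu$; this is precisely Molev's vanishing theorem for double Schur functions \cite{Mol}, after aligning his shift with the paper's via the automorphism $\tau$ and the definition \eqref{E:doublehone}. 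The diagonal value $\epsilon_{w_\la}(s_\la(x\|a))$ then comes from Molev's principal evaluation formula, whose hook product one bijects with the right inversions of $w_\la$ and rewrites using $\alpha_{ij}=a_{1-j}-a_{1-i}$ to recover the product $\prod_{s_{ij}w_\la<w_\la}\alpha_{ij}$ required by \eqref{E:xidiaginf}.

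Once $\epsilon(s_\la(x\|a))=\xi^{w_\la}$ is established for every $\la\in\Y$, the map $\epsilon$ sends the $\Q[a]$-basis $\{s_\la(x\|a)\}$ of $\La(x\|a)$ bijectively to the $\Q[a]$-basis $\{\xi^{w_\la}\}$ of $\Phi$, so $\epsilon$ is a $\Q[a]$-algebra isomorphism. I expect the main obstacle to be the diagonal evaluation: carefully verifying that Molev's hook product for $s_\la$ at the ``lowest vanishing'' substitution matches the Schubert product $\prod_{s_{ij}w_\la<w_\la}\alpha_{ij}$ under the paper's nonstandard conventions and the parameterization \eqref{E:wla}. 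The vanishing step, while conceptually central, essentially reduces to a known theorem once the substitution is written out.
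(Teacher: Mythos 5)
Your proposal follows essentially the same route as the paper's proof: check the GKM conditions \eqref{E:GKMinf} and \eqref{E:cosetinf} on the primitive generators $p_r[x-\ap]$, use degree considerations to pass from $\hPhi$ to $\Phi$, and then invoke the interpolation/vanishing characterization of the double Schur functions (Okounkov, Molev) to match $\epsilon(s_\la(x\|a))$ with the defining properties of $\xi^{w_\la}$ in Proposition~\ref{P:GKMGrinf}, concluding the isomorphism from the basis-to-basis correspondence. The details you flag as the main obstacle (aligning Molev's principal specialization with the product $\prod_{s_{ij}w_\la<w_\la}\alpha_{ij}$ under the paper's conventions) are exactly the computation the paper records in its displayed identity, so your plan is sound and matches the published argument.
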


\begin{proof} $\epsilon$ defines a $\Q[a]$-algebra homomorphism $\La(x\|a) \to \Fun(S_\Z,\Q[a])$. 
Since $\Phi$ is a $\Q[a]$-subalgebra of $\Fun(S_\Z,\Q[a])$,
to show that $\epsilon$ has image in $\Phi$ it suffices to check it on $\Q[a]$-algebra generators:
for $r\ge 1$ we must show that $\epsilon(p_r[x-\ap])\in\Phi$. By degree considerations it is enough to
show that $\epsilon(p_r[x-\ap])\in\hPhi$. To show this it suffices to show 
$g\in\hPhi$ where $g(w)=\epsilon_w(p_r[x-\ap])$.
It is clear that $g$ satisfies \eqref{E:cosetinf}. Condition \eqref{E:GKMinf} is straightforward to verify
using \eqref{E:epsilonpower}.

We deduce that the image of $\epsilon$ is in $\Phi$.
By the interpolation characterization of the double Schur functions \cite{O} (see also \cite[(2.21)]{Mol}) one has
$$
\epsilon(s_\la(x\|a))(w_\la) = \prod_{(i,j) \in \la} (a_{i-\la_i} - a_{\la'_j-j+1}) 
= \prod_{(i,j) \in \la} \alpha_{w(j),w(1-i)}
= \prod_{\substack{k<\ell \\ s_{k\ell}w < w}} \alpha_{k,\ell}
$$
and in addition $\epsilon(s_\la(x\|a))(w_\mu) = 0$ unless $\la \subseteq \mu$. 
It follows that $\epsilon(s_\la(x\|a))$ satisfies the defining property of the
basis element $\xi^{w_{\la}}$. In particular $\epsilon$ is onto.
Graded dimension counting shows that $\epsilon$ is an isomorphism.
\end{proof}

\subsection{Symmetric function ring for $\Phi_\Gr$}
\label{SS:symlocaff}

Define the map $\epsilon_\Gr: \La(x\|a) \to \Fun(\tS_n,S)$ by
\begin{align}\label{E:epsilonGr}
  \epsilon_\Gr(f)(w) = \epsilon_w(f)\qquad\text{for $w\in\tS_n$}
\end{align}
where the definition for $\epsilon_w$ in \eqref{E:epsilonpower} also makes sense for $w\in \tS_n$,
and $\epsilon_w(f)$ is considered an element of $S$ via the forgetful map \eqref{E:Forget}.  From now on without additional mention, we shall view an $S$-algebra as a $\Q[a]$-algebra via
the ring homomorphism $\For$. 

\begin{lem}\label{L:epsilonGr} The image of $\epsilon_\Gr$ is contained in $\Phi_\Gr$.
\end{lem}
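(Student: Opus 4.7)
The strategy is to verify membership on algebra generators. Since $\hPhi_\Gr$ is visibly an $S$-subalgebra of $\Fun(\tS_n,S)$---pointwise products preserve the defining conditions \eqref{E:smalltorusaffGr}, \eqref{E:smalltorusaffflags}, \eqref{E:cosetaff}---and $\La(x\|a)$ is generated as a $\Q[a]$-algebra by the primitives $p_r[x-\ap]$, it suffices to show that $g_r := \epsilon_\Gr(p_r[x-\ap])$ lies in $\hPhi_\Gr$ for every $r \ge 1$.  The improvement from $\hPhi_\Gr$ to $\Phi_\Gr$ is then automatic from the grading: each $g_r$ is homogeneous of degree $r$, so in any expansion $g_r = \sum_v c_v\,\xi^v$ the coefficient $c_v \in S$ has degree $r - \ell(v)$ and must vanish unless $\ell(v) \le r$; only finitely many $v \in \tS_n^0$ meet this bound.

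The coset condition \eqref{E:cosetaff} is immediate: for $u \in S_n \subset \tS_n$ the embedding into $\hS_\Z$ satisfies $u(c+kn)=u(c)+kn$, so $u\cdot\Z_{\le0}=\Z_{\le0}$ and therefore $wu\cdot\Z_{\le0}=w\cdot\Z_{\le0}$. Since $\epsilon_w$ depends on $w$ only through $w\cdot\Z_{\le0}$, one has $g_r(wu)=g_r(w)$.  By Remark \ref{R:smalltorusGrass}, \eqref{E:smalltorusaffflags} is then automatic, and we are reduced to verifying \eqref{E:smalltorusaffGr}.

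For the GKM divisibility, I would write $w = u\,t_\nu$ with $u\in S_n$ and $\nu\in Q^\vee$, so that $w(c+kn)=u(c)+(k+\nu_c)n$, and group the two infinite sums in \eqref{E:epsilonpower} by residue modulo $n$. Under $\For$ the contributions in each residue class regularize against each other and a direct count collapses to the closed form
\begin{align*}
g_r(u\,t_\nu) &= \sum_{c=1}^n \nu_c\, a_{\overline{1-u(c)}}^{\,r}
\end{align*}
in $S$, where $\overline{1-u(c)}\in\{1,\dotsc,n\}$ denotes the residue. Using the semidirect product identity $t_{k\alpha^\vee_{ij}}\,u\,t_\nu = u\, t_{\nu + k\,u^{-1}(e_i-e_j)}$ together with $u^{-1}(e_i-e_j)=e_{u^{-1}(i)}-e_{u^{-1}(j)}$, substitution into the formula yields
\begin{align*}
g_r(t_{k\alpha^\vee_{ij}}w)-g_r(w) &= k\bigl(a_{\overline{1-i}}^{\,r}-a_{\overline{1-j}}^{\,r}\bigr),
\end{align*}
which is $k$ times a visible multiple of $\For(\alpha_{ij})=a_{\overline{1-j}}-a_{\overline{1-i}}$.

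Expanding the finite difference
\begin{align*}
g_r\bigl((1-t_{\alpha^\vee_{ij}})^d w\bigr) &= \sum_{k=0}^d (-1)^k\binom{d}{k}\,g_r(t_{k\alpha^\vee_{ij}}w)
\end{align*}
now finishes the argument: the previous display exhibits $k\mapsto g_r(t_{k\alpha^\vee_{ij}}w)$ as affine-linear in $k$, so for $d=1$ the sum evaluates to $-(a_{\overline{1-i}}^{\,r}-a_{\overline{1-j}}^{\,r})\in\alpha_{ij}S$, and for $d\ge 2$ the $d$-th finite difference of an affine-linear function vanishes identically, giving $0\in\alpha_{ij}^d S$. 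The main obstacle---and the one place real care is needed---is the residue-wise regularization that produces the closed formula above, since for $w\in\tS_n$ the sums in \eqref{E:epsilonpower} are no longer finite before $\For$; once that bookkeeping is carried out, both GKM conditions fall out directly.
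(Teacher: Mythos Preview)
Your proof is correct and follows essentially the same route as the paper's: reduce to the generators $p_r[x-\ap]$, observe the coset condition is immediate so one may work with translation elements, compute the closed form $g_r(t_\nu)=\sum_i \nu_i\,a_{1-i}^r$ (your version carries along the $u\in S_n$ but agrees after using the coset invariance), note this is affine-linear in $\nu$, and then use the binomial expansion of $(1-t_{\alpha^\vee_{ij}})^d$ to see the $d=1$ case gives a visible multiple of $\alpha_{ij}$ while $d\ge 2$ vanishes. The paper's argument is marginally cleaner in that it uses \eqref{E:cosetaff} up front to replace $w$ by a pure translation $t_\nu$, avoiding the $u$-bookkeeping entirely, but the substance is identical.
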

\begin{proof} $\epsilon_\Gr$ is a well-defined $\Q[a]$-algebra
homomorphism. Since $\Phi_\Gr$ is a $\Q[a]$-subalgebra
of $\Fun(\tS_n,S)$, it suffices to consider the images of the algebra generators: for $r\ge1$ we must show that $\epsilon_\Gr(p_r[x-\ap])\in \Phi_\Gr$.  Let $w\in \tS_n$. 

The condition \eqref{E:cosetaff} is immediate. It follows that
we may instead evaluate at $t_\nu$ such that 
$\nu=(\nu_1,\dotsc,\nu_n)\in Q^\vee$ satisfies $w S_n = t_\nu S_n$.  Since
\begin{align*}
|\Z_{\le0} \cap t_\nu\cdot \Z_{>0} \cap (i+n\Z) | &= \min(0,-\nu_i) \\
| (\Z_{>0} \setminus t_\nu\cdot\Z_{>0})\cap (i+n\Z) | &= \max(0,\nu_i) \qquad\text{for $1\le i\le n$}
\end{align*}
we have
\begin{align*}
  \epsilon_\Gr(p_r[x-\ap])(t_\nu)
  &= \left(\sum_{i\in \Z_{\le0} \cap t_\nu\cdot \Z_{>0}} a_{1-i}^r -\sum_{i\in \Z_{>0}\setminus t_\nu\cdot \Z_{>0}} a_{1-i}^r  \right) \\ \notag
  &= \sum_{i=1}^n \nu_i\, a_{1-i}^r.
\end{align*}
Denote $g(t_\nu u) := \epsilon_\Gr(p_r[x-\ap])(t_\nu u) = \sum_{i=1}^n \nu_i a_{1-i}^r$, where $u \in S_n$.  Note that the function $\nu\mapsto g(t_\nu)$ is linear in $\nu$.

To establish \eqref{E:smalltorusaffGr} let $d>0$, $\al_{ij}$ a finite root and $\nu\in Q^\vee$.
We have
\begin{align*}
g((1-t_{\al^\vee_{ij}})^d t_\nu) 
&= \sum_{p+q=d} (-1)^q \binom{d}{q} g(t_{q\al^\vee_{ij}} t_\nu) \\
&= g(t_{\al^\vee_{ij}}) \sum_{p+q=d} (-1)^q \binom{d}{q} q +  g(t_\nu) \sum_{p+q=d} (-1)^q \binom{d}{q} \\
&= (a_{1-i}^r-a_{1-j}^r) \sum_{q=0}^d (-1)^q \binom{d}{q} q.
\end{align*}
If $d=1$ then $\sum_{q=0}^d (-1)^q \binom{d}{q} q = -1$ and
$a_{1-i}^r-a_{1-j}^r$ is divisible by $a_{1-j}-a_{1-i}$. Otherwise $d\ge2$,
in which case $\sum_{q=0}^d (-1)^q \binom{d}{q} q = 0$ and
again we are done.

Therefore $\epsilon_\Gr$ well-defines a $\Q[a]$-algebra homorphism
into $\Phi_\Gr$.
\end{proof}

To describe the kernel of $\epsilon_\Gr$ we introduce another basis of $\La(x\|a)$.
It is a kind of  power sum basis. For $r\ge1$ define $\mt_r(x\|a)\in \La(x\|a)$ by
\begin{align}\label{E:mt}
  \mt_r(x\|a) = \sum_{j=1}^r (-1)^{r-j} e_{r-j}(a_1,a_0,a_{-1},\dotsc,a_{2-r}) p_j[x-\ap]
\end{align}
and define $\mt_\la(x\|a) =\mt_{\la_1}(x\|a)\mt_{\la_2}(x\|a)\dotsm$. Let $\mt_0(x\|a)=1$.
The $\mt_\la(x\|a)$ form a basis of $\La(x\|a)$, as they 
are unitriangular with the $p_\la[x-\ap]$ basis.

\begin{lem}\label{L:epsilonGrker} The kernel of $\epsilon_\Gr$ contains
$\mt_r(x\|a)$ for $r\ge n$.
\end{lem}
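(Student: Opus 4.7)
The plan is to reduce to evaluation at translation elements, then recognize a Newton-type generating function identity. By Lemma \ref{L:epsilonGr}, $\epsilon_\Gr(\mt_r)$ lies in $\Phi_\Gr$ and so satisfies the coset invariance \eqref{E:cosetaff}; since every coset of $\tS_n/S_n$ contains a translation representative, it suffices to show that $\epsilon_\Gr(\mt_r)(t_\nu) = 0$ for all $\nu \in Q^\vee$.

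Write $\beta_i := \For(a_{1-i})$ and $c_k := \For(a_{2-k})$. Substituting the computation $\epsilon_\Gr(p_j[x-\ap])(t_\nu) = \sum_{i=1}^n \nu_i \beta_i^j$ from the proof of Lemma \ref{L:epsilonGr} into the definition \eqref{E:mt} gives
\[
\epsilon_\Gr(\mt_r)(t_\nu) = \sum_{i=1}^n \nu_i \sum_{j=1}^r (-1)^{r-j} e_{r-j}(c_1,\ldots,c_r)\, \beta_i^j.
\]
The key step is to recognize the inner sum as a truncated evaluation of the polynomial identity $\prod_{k=1}^r (y - c_k) = \sum_{j=0}^r (-1)^{r-j} e_{r-j}(c_1,\ldots,c_r)\,y^j$ at $y = \beta_i$. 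Extending the inner sum to include the $j = 0$ term adds $(-1)^r e_r(c_1,\ldots,c_r)$; summing this correction against $\sum_i \nu_i = 0$ (which holds since $\nu \in Q^\vee$) contributes nothing, so
\[
\epsilon_\Gr(\mt_r)(t_\nu) = \sum_{i=1}^n \nu_i \prod_{k=1}^r (\beta_i - c_k).
\]

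It remains to check that each product vanishes when $r \ge n$. Since $\For$ is periodic of period $n$ on the $a$-variables, $c_k = \beta_i$ iff $1-i \equiv 2-k \pmod n$, i.e. $k \equiv i+1 \pmod n$. For every $i \in \{1,\ldots,n\}$ such a $k$ lies in $\{1,\ldots,n\} \subseteq \{1,\ldots,r\}$, so one factor of $\prod_{k=1}^r (\beta_i - c_k)$ is zero and the total sum vanishes. There is no serious obstacle here: once one repackages the Newton-style linear combination in \eqref{E:mt} as the product $\prod_k (y - c_k)$, the cyclicity of $\For$ together with the coroot-lattice condition $\sum_i \nu_i = 0$ immediately force the vanishing for $r \ge n$.
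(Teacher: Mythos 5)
Your proof is correct and follows essentially the same route as the paper's: reduce to translation elements $t_\nu$ via coset invariance, substitute the evaluation $\epsilon_\Gr(p_j[x-\ap])(t_\nu)=\sum_i \nu_i a_{1-i}^j$, repackage the alternating sum as the polynomial $\prod_{k}(y-c_k)$ evaluated at $y=a_{1-i}$, and kill the two resulting pieces using $\sum_i\nu_i=0$ and the vanishing of that product in $S$ for $r\ge n$. Your final paragraph merely makes explicit the congruence argument the paper leaves implicit in the assertion $E_r(a_{1-i})=0$.
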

\begin{proof}
Let $r\ge n$. Let $\nu=(\nu_1,\dotsc,\nu_n)\in Q^\vee$
be arbitrary. It suffices to show that
$\epsilon_\Gr(\mt_r(x\|a))(t_\nu)=0$ for all $\nu\in Q^\vee$. We have
\begin{align*}
& \quad\epsilon_\Gr(\mt_r(x\|a))(t_\nu) \\
&= \sum_{k=1}^r (-1)^{r-k} e_{r-k}(a_1,a_0,a_{-1},\dotsc,a_{2-r})
 \epsilon_\Gr(p_k[x-\ap])(t_\nu)) \\
 &= \sum_{k=1}^r (-1)^{r-k} e_{r-k}(a_1,a_0,a_{-1},\dotsc,a_{2-r}) \sum_{i=1}^n \nu_i a_{1-i}^k \\
 &= \sum_{i=1}^n \nu_i (E_r(a_{1-i})-a_1a_0\dotsm a_{2-r})) \qquad\text{where} \\ 
 E_r(t) &= (t-a_1)(t-a_0)\dotsm (t-a_{2-r}).
\end{align*}
But $r\ge n$ so $E_r(a_{1-i})=0$ in $S$ for all $1\le i\le n$ and $\sum_{i=1}^n \nu_i=0$.
We conclude that $\mt_r(x\|a)\in \ker(\epsilon_\Gr)$.
\end{proof}

\begin{lem} \label{L:smallclasses} For $\la\in\Y$ such that $\la_1+\ell(\la)\le n$,
\begin{align}\label{E:doubleSchurSchubclass}
  \epsilon_\Gr(s_\la(x\|a)) = \xi^{w_\la^\af}.
\end{align}
In particular for $1\le r\le n-1$
\begin{align}\label{E:onerowclasses}
  \epsilon_\Gr(h_r(x\|a)) = \xi^{\rho^r}.
\end{align}
\end{lem}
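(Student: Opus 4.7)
The plan is to identify $\epsilon_\Gr(s_\la(x\|a))$ with $\xi^{w_\la^\af}$ by verifying the three defining properties of Proposition \ref{P:GKMGrAff}, leveraging the analogous identification from Theorem \ref{T:thminfloc}.

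The key observation is a factorization: since every $f \in \La(x\|a)$ is symmetric in the $x$-variables, the substitution \eqref{E:epsilonpower} depends only on the set $w \cdot \Z_{\le 0}$. For $w \in \tS_n$ with $\mu = \core(w)$, the definition of core gives $w \cdot \Z_{\le 0} = w_\mu \cdot \Z_{\le 0}$, so $\epsilon_\Gr(f)(w) = \For(\epsilon_{w_\mu}(f))$. Combined with Theorem \ref{T:thminfloc}, this gives $\epsilon_\Gr(s_\la(x\|a))(w) = \For(\xi^{w_\la}(w_{\core(w)}))$. By the support property of $\xi^{w_\la}$, this value vanishes unless $\la \subseteq \core(w)$; via the standard identification of Bruhat order on $\tS_n^0$ with containment on $n$-cores, this translates to $w_\la^\af \le w$, which together with \eqref{E:cosetaff} gives the full support condition. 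Homogeneity of degree $|\la| = \ell(w_\la^\af)$ is immediate from the grading of $s_\la(x\|a)$.

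These two properties, together with the direct sum $\Phi_\Gr = \bigoplus_{v \in \tS_n^0} S\xi^v$, force $\epsilon_\Gr(s_\la(x\|a)) = c\,\xi^{w_\la^\af}$ for some scalar $c \in \Q$, since any other Schubert contribution would demand a coefficient of negative degree in $S$. To determine $c$, I evaluate at $w = w_\la^\af$. The hypothesis $\la_1 + \ell(\la) \le n$ bounds every hook length of $\la$ by $n-1$, so $\la$ is an $n$-core and $\core(w_\la^\af) = \la$. Then $\epsilon_\Gr(s_\la(x\|a))(w_\la^\af) = \For(\xi^{w_\la}(w_\la))$, which by the Okounkov formula recorded in the proof of Theorem \ref{T:thminfloc} equals $\prod_{(i,j)\in\la}\For(a_{i-\la_i} - a_{\la'_j-j+1})$, while $\xi^{w_\la^\af}(w_\la^\af)$ is the product over left affine descents of $w_\la^\af$ of their finite root parts. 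Both are products of $|\la|$ factors indexed by cells of $\la$; the bound $\la_1 + \ell(\la) \le n$ ensures the indices $i - \la_i$ and $\la'_j - j + 1$ lie within a single $n$-window, so the forgetful map identifies the two products factor-by-factor, yielding $c = 1$.

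The second statement is the special case $\la = (r)$ (for which $\la_1 + \ell(\la) = r+1 \le n$), using $h_r(x\|a) = s_{(r)}(x\|a)$ and $w_{(r)}^\af = \rho^r$. The main obstacle is making the cell-by-cell matching precise: one must choose compatible labellings of cells of $\la$ by the inversions of $w_\la \in S_\Z^0$ (via the Okounkov formula) and by the left affine descents of $w_\la^\af \in \tS_n^0$ (via a fixed reduced word), and verify that the forgetful map sends the infinite-case root at each cell to the finite part of the affine-case root at the same cell. The $n$-window condition rules out collisions under reduction mod $n$, so the required identification amounts to tracking a single reduced expression; if preferred, one can bypass this bookkeeping entirely by specializing $a \mapsto 0$ and invoking the non-equivariant identification $\La^{(n)} \cong H^*(\Gr)$ under which $s_\la \mapsto \xi^{w_\la^\af}_0$ in the stated range.
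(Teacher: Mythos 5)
Your proof is correct and follows essentially the same route as the paper's: reduce to the infinite-rank identity $\epsilon(s_\la(x\|a))=\xi^{w_\la}$ of Theorem \ref{T:thminfloc}, transfer to the affine setting via the core correspondence (equivalently, the matching of reduced words of $w_\la$ and $w_\la^\af$ within a window of size at most $n$ guaranteed by $\la_1+\ell(\la)\le n$), and conclude by the uniqueness characterization of $\xi^{w_\la^\af}$ in Proposition \ref{P:GKMGrAff}. Your reorganization (support plus homogeneity forcing a rational scalar, then pinning the scalar by the diagonal value) is just a slightly more explicit rendering of the paper's appeal to that characterization.
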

\begin{proof} We have $\epsilon(s_\la(x\|a))=\xi^{w_\la}$.
For such partitions $\la$, $w_\la^\af$ and $w_\la$ have the same reduced words, and for $w_\la^\af$ one of the simple generators is not used.  The Bruhat order ideals $\{v \in S_\Z^0 \mid v < w_\la\}$ and $\{u \in \tS_n^0 \mid u < w_\la^\af\}$ are naturally isomorphic via a bijection $u = \gamma(v)$.    Furthermore, we have $\la_v = \core(\gamma(v))$, so that $\epsilon_v(s_\la(x\|a)) = \epsilon_{\gamma(v)}(s_\la(x\|a))$.  Since $\epsilon(s_\la(x\|a))$ satisfies Proposition \ref{P:GKMGrinf}, we also have that $\epsilon_\Gr(s_\la(x\|a))$ satisfies Proposition \ref{P:GKMGrAff}.
\end{proof}

Define
\begin{align}
\label{E:In} I_n &= \sum_{r\ge n} \La(x\|a) \mt_r(x\|a) \\
\label{E:Laupper} \La^{(n)}(x\|a) &= S \otimes_{\Q[a]} \La(x\|a)/I_n.
\end{align}
Since the $\mt_\la(x\|a)$ are unitriangular with the $p_\la[x-\ap]$
it follows that
\begin{align}\label{E:Imtbasis}
  I_n = \bigoplus_{\la_1\ge n} \Q[a] \mt_\la(x\|a).
\end{align}

\begin{thm} \label{T:CohomGr}
The map $\epsilon_\Gr$ induces an $S$-algebra isomorphism
$$\La^{(n)}(x\|a) \cong \Phi_\Gr.$$
\end{thm}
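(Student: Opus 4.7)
The plan is to construct the induced $S$-algebra map, verify surjectivity via the known algebra generators of $\Phi_\Gr$, and deduce injectivity by comparing graded $\Q$-dimensions.

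First, by Lemma \ref{L:epsilonGr} the map $\epsilon_\Gr: \La(x\|a) \to \Phi_\Gr$ is a $\Q[a]$-algebra homomorphism, and by Lemma \ref{L:epsilonGrker} it annihilates $\mt_r(x\|a)$ for every $r \geq n$, hence annihilates the ideal $I_n$. Extending scalars along $\For:\Q[a]\to S$ then produces the desired $S$-algebra homomorphism $\tilde\epsilon_\Gr:\La^{(n)}(x\|a) \to \Phi_\Gr$. For surjectivity, Proposition \ref{P:cohomgen} says $\Phi_\Gr$ is generated as an $S$-algebra by the classes $\xi^{\rho^r}$ for $1 \leq r \leq n-1$. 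Since the partition $(r)$ satisfies $\la_1+\ell(\la) = r+1 \leq n$, Lemma \ref{L:smallclasses} gives $\epsilon_\Gr(h_r(x\|a)) = \xi^{\rho^r}$, so each generator lies in the image of $\tilde\epsilon_\Gr$, making $\tilde\epsilon_\Gr$ surjective.

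For injectivity I would argue by a graded $\Q$-dimension count. All $a_i$ have degree $1$ and $\For$ is graded, so $\La^{(n)}(x\|a)$ inherits a grading from $\La(x\|a)$; moreover, by \eqref{E:Imtbasis}, $\La(x\|a)/I_n$ is $\Q[a]$-free on $\{\mt_\la(x\|a) \mid \la_1 < n\}$, and after tensoring with $S$ we obtain an $S$-basis of $\La^{(n)}(x\|a)$ with $\mt_\la(x\|a)$ homogeneous of degree $|\la|$. On the other side, $\Phi_\Gr$ is $S$-free on $\{\xi^v \mid v \in \tS_n^0\}$ with $\xi^v$ homogeneous of degree $\ell(v)$, while $S=\Q[a_0,\dotsc,a_{n-1}]$ is polynomial with $a_i$ of degree $1$. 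The bijection $\la \leftrightarrow w_\la^\af$ between $(n-1)$-bounded partitions and $\tS_n^0$ is length-preserving, $|\la|=\ell(w_\la^\af)$, because the expression \eqref{E:wla} interpreted in $\tS_n$ is reduced. Consequently, in each polynomial degree $d$ both sides have the same finite $\Q$-dimension, namely $\sum_{\la:\la_1<n} \binom{d-|\la|+n-1}{n-1}$. A surjective map of graded $\Q$-vector spaces with matching finite-dimensional graded pieces is automatically injective.

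The main obstacle is really just recognizing this dimension match; everything else is an immediate assembly of the preceding lemmas. Verifying $|\la|=\ell(w_\la^\af)$ is classical for affine type $A$, and all the heavy lifting---well-definedness of $\epsilon_\Gr$, the kernel computation, and the identification $\xi^{\rho^r}=\epsilon_\Gr(h_r(x\|a))$---has been done earlier in the section, so the theorem falls out by assembling these pieces.
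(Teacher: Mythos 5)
Your proof is correct and follows exactly the paper's argument: well-definedness from Lemmata \ref{L:epsilonGr} and \ref{L:epsilonGrker}, surjectivity from Lemma \ref{L:smallclasses} and Proposition \ref{P:cohomgen}, and injectivity by graded dimension counting. The paper leaves the dimension count as a one-line remark; you have simply filled in the details (the $S$-bases $\{\mt_\la(x\|a)\mid\la_1<n\}$ and $\{\xi^v\mid v\in\tS_n^0\}$ and the length-preserving bijection $\la\leftrightarrow w_\la^\af$), and those details are right.
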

\begin{proof} There is a well-defined $S$-algebra homomorphism 
$\La^{(n)}(x\|a) \to \Phi_\Gr$ by Lemmata \ref{L:epsilonGr} and \ref{L:epsilonGrker}.
It is surjective by Lemma \ref{L:smallclasses} and Proposition \ref{P:cohomgen}.
Graded dimension counting completes the proof.
\end{proof}

\section{The ring dual to the double symmetric functions}
Let $\hL(y\|a)$ be the Hopf algebra over $\Q[a]$ given by 
the symmetric series in the variables $y=(y_1,y_2,\dotsc)$ with coefficients in $\Q[a]$.
More precisely, $\hL(y\|a)$ is the formal power series ring over $\Q[a]$ in algebraically independent
primitive elements $p_r[y]=\sum_{i\ge1} y_i^r$ for $r>0$.  We refer the reader to \cite{Mol} for more details concerning $\hL(y\|a)$.

\subsection{Pairing with double symmetric functions}
There is a $\Q[a]$-bilinear perfect pairing $\ip{}{}: \La(x\|a)\times \hL(y\|a)\to\Q[a]$
defined by either of the following, for $\la,\mu\in\Y$:
\begin{align}
\label{E:schurpair}
\ip{s_\la[x-\ap]}{s_\mu[y]} &= \delta_{\la\mu} \\
\label{E:powerpair}
\ip{p_\la[x-\ap]}{p_\mu[y]} &= z_\la \delta_{\la\mu}
\end{align}
where $z_\la = \prod_{i\ge1} i^{m_i(\la)} m_i(\la)!$ and $m_i(\la)$ is the number of times the part $i$ occurs in $\la$.

\begin{lemma}\label{L:Hopfduality}
Under this pairing, the coalgebras $\Lambda(x\|a)$ and $\hL(y\|a)$ are Hopf-dual; namely,
$$
\ip{\Delta f}{g \otimes h} = \ip{f}{gh} \qquad \ip{g \otimes h}{\Delta f} = \ip{gh}{f}.
$$
\end{lemma}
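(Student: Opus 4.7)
The plan is to reduce the Hopf duality to the classical calculation on the power sum basis, exploiting that by construction the generators $p_r[x-\ap]\in\La(x\|a)$ and $p_r[y]\in\hL(y\|a)$ are primitive. By $\Q[a]$-bilinearity of $\ip{}{}$ and $\Q[a]$-linearity of the coproducts, it suffices to verify each identity on the $p_\la$ bases: for the first identity I take $f=p_\nu[x-\ap]$ and $g\otimes h = p_\la[y]\otimes p_\mu[y]$; the second identity is verified by the symmetric computation with the roles of the two algebras swapped, using primitivity of $p_r[y]$ in place of $p_r[x-\ap]$. The completion in $\hL(y\|a)$ causes no trouble here: in every instance of the pairing, one side has bounded degree and hence extracts only finitely many terms of the other.

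The core computation is then the following. Primitivity and the fact that $\Delta$ is an algebra homomorphism give
\[
\Delta\, p_\nu[x-\ap] \;=\; \prod_i \bigl( p_{\nu_i}[x-\ap]\otimes 1 \;+\; 1\otimes p_{\nu_i}[x-\ap]\bigr) \;=\; \sum_{\alpha\cup\beta=\nu} \binom{\nu}{\alpha}\, p_\alpha[x-\ap]\otimes p_\beta[x-\ap],
\]
where $\binom{\nu}{\alpha}=\prod_i \binom{m_i(\nu)}{m_i(\alpha)}$ and the sum runs over pairs of partitions $(\alpha,\beta)$ whose multiset union is $\nu$. Pairing termwise against $p_\la[y]\otimes p_\mu[y]$ via \eqref{E:powerpair} produces $\binom{\nu}{\la}\,z_\la z_\mu$ when $\la\cup\mu=\nu$ and $0$ otherwise. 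On the other hand $p_\la[y]\,p_\mu[y]=p_{\la\cup\mu}[y]$, so $\ip{p_\nu[x-\ap]}{p_\la[y]\,p_\mu[y]}=z_\nu\,\delta_{\la\cup\mu,\nu}$. The claim thus reduces to the classical identity $\binom{\nu}{\la}\,z_\la z_\mu = z_\nu$ whenever $\la\cup\mu=\nu$, which is immediate from the formula $z_\la=\prod_i i^{m_i(\la)}\, m_i(\la)!$.

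I do not anticipate a substantive obstacle: this is essentially the classical Hopf self-duality of the ring of symmetric functions, lifted to the $\Q[a]$-linear setting. It works precisely because the Hopf structures on $\La(x\|a)$ and $\hL(y\|a)$ were \emph{defined} by declaring the power sum generators primitive, and because the pairing \eqref{E:powerpair} agrees with the classical Hall inner product on power sums. The only mild care required is in formally handling the tensor product of a polynomial algebra with a completed one, but since the pairing is diagonal on the $p$-basis and evaluates to a finite sum in every case we must compute, this is a bookkeeping matter rather than a genuine difficulty.
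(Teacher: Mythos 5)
Your proof is correct. The paper states Lemma \ref{L:Hopfduality} without proof, treating it as the standard Hopf self-duality of symmetric functions carried over to Molev's setting (both coproducts are \emph{defined} by primitivity of the generators $p_r[x-\ap]$ and $p_r[y]$, and the pairing \eqref{E:powerpair} is the classical Hall pairing on the $p$-basis); your reduction to the power-sum bases and the identity $\binom{\nu}{\la}\,z_\la z_\mu = z_\nu$ for $\la\cup\mu=\nu$ is precisely the verification the authors leave implicit, including the correct observation that the completion of $\hL(y\|a)$ causes no convergence issues because the pairing is diagonal.
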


%

\subsection{Dual Schur functions}
The \defn{dual Schur functions} are the ``basis'' $\hs_\la(y\|a)$ of $\hL(y\|a)$ dual to
the double Schur basis of $\La(x\|a)$:
\begin{align}\label{E:dualSchur}
\ip{s_\la(x\|a)}{\hs_\la(y\|a)} &= \delta_{\la\mu}.
\end{align}
\begin{remark} The dual Schur functions generally have an infinite expansion in terms of
Schur functions, with $s_\la(y\|a) = s_\la(y) + \text{higher degree terms}$ \cite{Mol};
see \eqref{E:hhtos} for the case when $\la$ is a single row.
\end{remark}

The \defn{dual homogeneous symmetric functions} are given by $\hh_k(y\|a) = \hs_k(y\|a)$ for $k\in \Z_{\ge0}$
and $\hh_\la(y\|a) = \hh_{\la_1}(y\|a)\hh_{\la_2}(y\|a)\dotsm$ for $\la\in\Y$.
More explicitly, we have
\begin{align}\label{E:hhGF}
  \sum_{k\ge0} \hh_k(y\|a) (t-a_1)(t-a_0)(t-a_{-1})\dotsm(t-a_{2-k}) &= \prod_{i\ge1} \dfrac{1-a_1 y_i}{1-t y_i}
\end{align}

\begin{rem}\label{R:ha} For $k>0$, $\hh_k(y\|a)$ only depends on the $k+1$ parameters $a_1$, $a_0$,
$a_{-1}$, $\dotsc$, $a_{1-k}$.
This follows by induction upon the substitution of $t=a_{1-k}$ into \eqref{E:hhGF}:
\begin{align}\label{E:hhGFsubs}
\sum_{j=0}^k \hh_j(y\|a) (a_{1-k}-a_1)(a_{1-k}-a_0)\dots(a_{1-k}-a_{2-j}) =
\prod_{i\ge1} \dfrac{1-a_1y_i}{1-a_{1-k}y_i}.
\end{align}
\end{rem}

The dual Schur functions satisfy the Jacobi-Trudi formula 
\begin{align*}
\hs_\la(y\|a) = \det (\tau^{j-1} \hh_{\la_i-i+j}(y\|a))_{1\le i,j\le \ell(\la)}
\end{align*}

\subsection{Orthogonality}

The \defn{double monomial symmetric functions} $m_\la(x\|a)$ are by definition
the basis of $\La(x\|a)$ dual to the dual $h$-``basis'' of $\hL(y\|a)$:
\begin{align}\label{E:doublem}
\ip{m_\la(x\|a)}{\hh_\mu(y\|a)} = \delta_{\la,\mu}\qquad\text{for $\la,\mu\in\Y$.}
\end{align}

\begin{rem} It will be shown in Proposition \ref{P:monomial}
that $\mt_r(x\|a)=m_r(x\|a)$ for $r\ge1$.
However $\mt_\la(x\|a)$ and $m_\la(x\|a)$ disagree in general.
\end{rem}

\begin{lem}\label{L:mhortho} For all $r,k\ge0$ we have
\begin{align}\label{E:mhonepart}
\ip{\mt_r(x\|a)}{\hh_k(y\|a)} = \delta_{r,k}.
\end{align}
In particular, for all $r>0$ and $\mu\in\Y$ with $\mu_1 < r$ we have
\begin{align}\label{E:orthopart}
\ip{\mt_r(x\|a)}{\hh_\mu(y\|a)} = 0.
\end{align}
\end{lem}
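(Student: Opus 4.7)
The plan is to establish \eqref{E:mhonepart} by a direct generating function computation, and then deduce \eqref{E:orthopart} formally from the Hopf structure.

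\emph{Step 1 (generating function for $\ip{p_j[x-\ap]}{\hh_k}$).} I would first compute $\ip{p_j[x-\ap]}{\hh_k(y\|a)}$ for all $j \geq 1$ and $k \geq 0$. Taking the plethystic logarithm of the right-hand side of \eqref{E:hhGF} exposes its power-sum expansion:
\[
\prod_{i \geq 1} \frac{1 - a_1 y_i}{1 - t y_i} \;=\; \sum_{\la \in \Y} \frac{p_\la[y]}{z_\la} \prod_l (t^{\la_l} - a_1^{\la_l}).
\]
Since $\ip{p_j[x-\ap]}{p_\la[y]} = j\,\delta_{(j),\la}$ by \eqref{E:powerpair}, pairing $p_j[x-\ap]$ against both sides of \eqref{E:hhGF} picks out the length-one row and yields
\[
\sum_{k \geq 0} \ip{p_j[x-\ap]}{\hh_k(y\|a)}\, E_k(t) \;=\; t^j - a_1^j, \qquad E_k(t) := (t-a_1)(t-a_0)\cdots(t-a_{2-k}).
\]

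\emph{Step 2 (assembling $\mt_r$).} Multiplying by $(-1)^{r-j}\,e_{r-j}(a_1,\ldots,a_{2-r})$, summing over $j=1,\ldots,r$, and using \eqref{E:mt}, one gets
\[
\sum_{k \geq 0} \ip{\mt_r(x\|a)}{\hh_k(y\|a)}\, E_k(t) \;=\; \sum_{j=1}^r (-1)^{r-j} e_{r-j}(a_1,\ldots,a_{2-r})\,(t^j - a_1^j).
\]
I would then recognize the right-hand side as $E_r(t)$ itself: the $t$-sum is $E_r(t)$ minus its $j=0$ term, while the $a_1$-sum equals $E_r(a_1) = 0$ (valid for $r\ge1$ since $(t-a_1)$ is a factor of $E_r$) minus its $j=0$ term, and the two missing terms cancel. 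Because the polynomials $\{E_k(t)\}_{k\ge0}$ have strictly increasing degrees and hence are $\Q[a]$-linearly independent, matching coefficients gives $\ip{\mt_r}{\hh_k} = \delta_{r,k}$ for $r \geq 1$. The $r=0$ case is immediate from $\mt_0 = 1$ and $\ip{1}{\hh_k} = \delta_{0,k}$ (which itself follows from $s_0 = 1$ being dual to $\hs_0 = \hh_0$).

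\emph{Step 3 (deducing \eqref{E:orthopart}).} Since $\mt_r(x\|a)$ is a $\Q[a]$-linear combination of the primitive generators $p_j[x-\ap]$, it is itself primitive. Iterated application of Lemma \ref{L:Hopfduality} therefore gives
\[
\ip{\mt_r(x\|a)}{\hh_{\mu_1}(y\|a) \cdots \hh_{\mu_\ell}(y\|a)} \;=\; \sum_{i=1}^{\ell(\mu)} \ip{\mt_r}{\hh_{\mu_i}}\, \prod_{l \neq i} \ip{1}{\hh_{\mu_l}}.
\]
Each part $\mu_l$ is positive, so when $\ell(\mu) \geq 2$ every summand contains a factor $\ip{1}{\hh_{\mu_l}} = 0$; when $\ell(\mu) = 1$ the hypothesis $\mu_1 < r$ forces $\ip{\mt_r}{\hh_{\mu_1}} = 0$ by Step 2. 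The only computational content is the cancellation in Step 2, powered by $E_r(a_1)=0$; once that is noted, everything else is formal bookkeeping.
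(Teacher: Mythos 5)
Your proposal is correct, but your derivation of \eqref{E:mhonepart} takes a genuinely different route from the paper's. The paper expands both sides of the pairing in Schur-type bases: it writes $p_j[x-\ap]$ as an alternating sum of hook Schur functions \eqref{E:singleptos}, invokes Molev's hook expansion \eqref{E:hhtos} of $\hh_k(y\|a)$, evaluates the resulting double sum via \eqref{E:schurpair}, and finally recognizes the answer for $k<r$ as the coefficient of $t^{r-k}$ in a polynomial of degree $r-k-1$. You instead pair $p_j[x-\ap]$ directly against the defining generating function \eqref{E:hhGF} using the power-sum pairing \eqref{E:powerpair}, packaging the whole computation into the single identity $\sum_{k\ge0}\ip{\mt_r(x\|a)}{\hh_k(y\|a)}\,E_k(t)=E_r(t)$, where $E_k(t)=(t-a_1)\dotsm(t-a_{2-k})$; the answer then falls out of the triangularity of the $E_k$. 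Your route avoids \eqref{E:hhtos} and the hook expansions entirely, hinges only on the cancellation $E_r(a_1)=0$, and is arguably more illuminating: it exhibits $\mt_r(x\|a)$ as the functional extracting the coefficient of $E_r(t)$ from \eqref{E:hhGF}, which foreshadows Proposition \ref{P:monomial}. The one detail worth making explicit is that the sum over $k$ is finite, i.e.\ $\ip{\mt_r(x\|a)}{\hh_k(y\|a)}=0$ for $k>r$ (since $\mt_r$ lies in the span of $s_\la[x-\ap]$ with $|\la|\le r$ while $\hh_k$ lies in the span of $s_\mu[y]$ with $|\mu|\ge k$), so that matching coefficients against the linearly independent $E_k(t)$ is legitimate. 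Your Step 3, reducing \eqref{E:orthopart} to \eqref{E:mhonepart} via primitivity of $\mt_r$ and Lemma \ref{L:Hopfduality}, is identical to the paper's.
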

\begin{proof} Since $\mt_r(x\|a)$ is primitive, by Lemma \ref{L:Hopfduality}
the proof of \eqref{E:orthopart} reduces to that of \eqref{E:mhonepart}.
For $k=0$, $\hh_0(y\|a)=1$ and the result holds by \eqref{E:powerpair}. 
Suppose $k>0$. For all $j>0$ we have
\begin{align}\label{E:singleptos}
  p_j[x-\ap] &= \sum_{i=0}^{j-1} (-1)^i s_{(j-i,1^i)}[x-\ap].
\end{align}
By \cite[Cor. 3.13]{Mol} we have
\begin{align}\label{E:hhtos}
  \hh_k(y\|a) &= \sum_{p,q\ge0}(-a_1)^q h_p(a_0,a_{-1},\dotsc,a_{1-k}) s_{(k+p,1^q)}[y].
\end{align}
By \eqref{E:schurpair} we have
\begin{align*}
&\,\ip{\mt_r(x\|a)}{\hh_k(y\|a)} \\
&= \sum_{j=1}^r (-1)^{r-j} e_{r-j}(a_1,a_0,a_{-1},\dotsc,a_{2-r}) \sum_{i=0}^{j-1} a_1^i h_{j-i-k}(a_0,a_{-1},\dotsc,a_{1-k}) \\
&= \sum_{j=k}^r (-1)^{r-j} e_{r-j}(a_1,a_0,a_{-1},\dotsc,a_{2-r}) h_{j-k}(a_1,a_0,a_{-1},\dotsc,a_{1-k}).
\end{align*}
If $k\ge r$ then the sum is $\delta_{kr}$ as required.
If $k<r$ the sum is $0$, since it is the coefficient of $t^{r-k}$ in the following polynomial of degree $r-k-1$:
\begin{align*}
\dfrac{(1-a_1t)(1-a_0t)(1-a_{-1}t)\dotsm(1-a_{2-r}t)}
{(1-a_1t)(1-a_0t)(1-a_{-1}t)\dotsm (1-a_{1-k}t)} = (1-a_{-k}t)\dotsm(1-a_{2-r}t).
\end{align*}
\end{proof}

Let $L_{\la\mu}\in\Q[a]$ be defined by
\begin{align}
  \mt_\la(x\|a) &= \sum_\mu L_{\la\mu} m_\mu(x\|a).
\end{align}

\begin{prop}\label{P:monomial} $L_{\la\mu}=0$ unless $\mu$ is a refinement of $\la$ (that is, $\la$
is obtained from $\mu$ by replacing each part $\mu_i$ of $\mu$ by a collection of positive integers
that sums to $\mu_i$). Moreover $L_{\la\la} = \prod_i m_i(\la)!$ and $\mt_r(x\|a)=m_r(x\|a)$.
\end{prop}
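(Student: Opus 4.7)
The approach uses primitivity of each $\mt_r(x\|a)$—immediate from \eqref{E:mt}, which expresses $\mt_r$ as a $\Q[a]$-linear combination of the primitive generators $p_j[x-\ap]$—together with iterated Hopf duality (Lemma \ref{L:Hopfduality}). The proof proceeds in two stages.

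First, I would show $\mt_r=m_r$: by duality this is equivalent to $\ip{\mt_r}{\hh_\mu}=\delta_{(r),\mu}$. For $\ell(\mu)\ge 2$, writing $\hh_\mu=\prod_j \hh_{\mu_j}$ and using primitivity of $\mt_r$ together with $\epsilon(\hh_k)=0$ for $k\ge 1$ forces $\ip{\mt_r}{\hh_\mu}=\sum_j\ip{\mt_r}{\hh_{\mu_j}}\prod_{j'\neq j}\epsilon(\hh_{\mu_{j'}})=0$. For $\mu=(k)$, Lemma \ref{L:mhortho} supplies $\delta_{r,k}$.

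For general $\la,\mu$, expand $\mt_\la=\prod_i \mt_{\la_i}$ as a product of primitives. Iterating Hopf duality—each application of primitivity combined with $\epsilon(\hh_{\mu_j})=0$ forces the surviving contributions to come from surjective distributions of the $\mt_{\la_i}$'s among the $\hh_{\mu_j}$'s—yields
\[
L_{\la,\mu}=\sum_{\phi:[\ell(\la)]\twoheadrightarrow [\ell(\mu)]}\prod_{j=1}^{\ell(\mu)}\ip{\mt_{\beta^{(j)}}}{\hh_{\mu_j}},
\]
where $\beta^{(j)}$ is the sub-partition with parts $(\la_i:\phi(i)=j)$. To evaluate $\ip{\mt_\beta}{\hh_m}$, I would compute $\Delta \hh_m$ from \eqref{E:hhGF}: the substitution $y\mapsto (y,z)$ in the generating function identity gives $\Delta \hh_m=\sum_{k,l}c^m_{kl}\hh_k\otimes \hh_l$, where $c^m_{kl}$ is the coefficient of $P_m(t)$ in $P_k(t)P_l(t)$ and $P_r(t):=(t-a_1)(t-a_0)\cdots(t-a_{2-r})$. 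Iterating and pairing with the primitives $\mt_{\beta_i}=m_{\beta_i}$ yields
\[
\ip{\mt_\beta}{\hh_m}=C^m_{\beta_1,\ldots,\beta_{\ell(\beta)}},
\]
the coefficient of $P_m$ in the monic polynomial $P_{\beta_1}\cdots P_{\beta_{\ell(\beta)}}$ of degree $|\beta|$. Hence $C^{|\beta|}_{\beta}=1$ and $C^m_{\beta}=0$ for $m>|\beta|$.

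Combining: since refinement forces $|\la|=|\mu|$ (splitting preserves sums), restrict to this case. Each factor in the sum then requires $|\beta^{(j)}|\ge \mu_j$ for nonvanishing, and $\sum|\beta^{(j)}|=|\la|=|\mu|=\sum\mu_j$ forces equality $|\beta^{(j)}|=\mu_j$, whereupon each factor equals $1$. Thus $L_{\la,\mu}$ counts surjections $\phi$ with $\sum_{\phi(i)=j}\la_i=\mu_j$—exactly the condition that $\mu$ is a refinement of $\la$—and for $\mu=\la$ these must be bijections preserving the multiset of parts, totaling $\prod_i m_i(\la)!$. The principal obstacle is justifying $\ip{\mt_\beta}{\hh_m}=C^m_\beta$: identifying the $P_k$-expansion structure constants from the generating function, and iterating to arbitrary $\ell(\beta)$, requires viewing \eqref{E:hhGF} as a Cauchy-type identity compatible with variable-set concatenation.
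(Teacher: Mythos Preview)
Your approach is exactly the one the paper sketches: it too cites Lemma~\ref{L:Hopfduality}, Lemma~\ref{L:mhortho}, and the primitivity of the $\mt_r$, and leaves the reader to unwind the details. Your elaboration---expanding $\Delta^{(\ell(\mu)-1)}\mt_\la$ via primitivity into a sum over functions $\phi$, and then computing $\ip{\mt_\beta}{\hh_m}$ by identifying $\Delta\hh_m=\sum_{k,l}c^m_{kl}\,\hh_k\otimes\hh_l$ with $c^m_{kl}$ the coefficient of $P_m$ in $P_kP_l$---is the natural way to flesh that out, and it is correct.

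There is, however, a real gap at the step ``since refinement forces $|\la|=|\mu|$, restrict to this case.'' The proposition asserts $L_{\la\mu}=0$ whenever $\mu$ is \emph{not} a refinement of $\la$; since refinement (as defined parenthetically) preserves size, this includes in particular all pairs with $|\mu|\ne|\la|$. Your formula $L_{\la\mu}=\sum_{\phi}\prod_j C^{\mu_j}_{\beta^{(j)}}$ together with $C^m_\beta=0$ for $m>|\beta|$ does handle $|\mu|>|\la|$. But for $|\mu|<|\la|$ nothing you wrote forces vanishing, and indeed it fails: with $\la=(1,1)$ and $\mu=(1)$ one has $L_{(1,1),(1)}=C^{1}_{(1,1)}=a_0-a_1\ne0$, since $P_1^2=P_2+(a_0-a_1)P_1$. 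So the proposition as literally stated is inaccurate on this point. What is both true and sufficient for the application in Proposition~\ref{P:mbasis} is that $L_{\la\mu}=0$ unless $\mu_1\ge\la_1$; this follows from your own machinery by noting that if the part $\la_1$ lies in $\beta^{(j)}$ then $P_{\la_1}$ divides $\prod_i P_{\beta^{(j)}_i}$, and hence (Newton interpolation at the nodes $a_1,a_0,a_{-1},\ldots$) the coefficient of $P_m$ in that product vanishes for every $m<\la_1$.
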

\begin{proof} This follows using Lemmata \ref{L:Hopfduality} and \ref{L:mhortho}
and the fact that the $\mt_r(x\|a)$ are primitive.
\end{proof}
Define $M_{\la\mu}\in\Q[a]$ by
\begin{align}\label{E:M}
  m_\la(x\|a) &= \sum_{\mu\in\Y} M_{\la\mu} m_\mu[x-\ap]
\end{align}
where $m_\mu[x-\ap]$ is the basis of $\La(x\|a)$ dual to the 
homogeneous symmetric functions $h_\mu[y]\in \hL(y\|a)$.

\begin{prop} \label{P:mm} $M_{\la\mu}\in \Z[a]$, $M_{\la\la}=1$,
and for $\mu \neq \la$, we have $M_{\la\mu}=0$ unless $|\mu|<|\la|$.
\end{prop}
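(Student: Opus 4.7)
The plan is to deduce all three claims by dualizing a corresponding transition expansion on the $\hL(y\|a)$ side. By applying the pairing $\ip{\cdot}{h_\mu[y]}$ to the defining equation \eqref{E:M} and using that $\{m_\nu[x-\ap]\}$ is dual to $\{h_\mu[y]\}$, one obtains the compact formula $M_{\la\mu} = \ip{m_\la(x\|a)}{h_\mu[y]}$. The problem thereby reduces to controlling the transition matrix between the $\hh_\mu(y\|a)$ and the $h_\nu[y]$, since $\{m_\la(x\|a)\}$ and $\{\hh_\mu(y\|a)\}$ are dual by \eqref{E:doublem}.

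My next step is to use formula \eqref{E:hhtos} together with Jacobi--Trudi to expand $\hh_k(y\|a) = \sum_\nu N_{k,\nu}\, h_\nu[y]$ with $N_{k,\nu}\in\Z[a]$. Each Schur function $s_{(k+p,1^q)}[y]$ is homogeneous of size $k+p+q$ and expands in the basis $\{h_\nu[y]\}$ with integer coefficients via the standard determinantal formula, and the coefficients $h_p(a_0,\ldots,a_{1-k})$ and $(-a_1)^q$ in \eqref{E:hhtos} lie in $\Z[a]$. Only the $(p,q)=(0,0)$ term can contribute to $h_\nu$ with $|\nu|=k$, and it contributes exactly $s_{(k)}[y]=h_k[y]$. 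Hence $N_{k,(k)}=1$ and $N_{k,\nu}=0$ unless $|\nu|\ge k$. Multiplying, $\hh_\mu(y\|a) = \prod_i \hh_{\mu_i}(y\|a) = \sum_\nu N_{\mu\nu} h_\nu[y]$ inherits the same integrality and triangularity: $N_{\mu\nu}\in\Z[a]$, $N_{\mu\mu}=1$, and $N_{\mu\nu}=0$ unless $|\nu|\ge |\mu|$.

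Finally, expanding the duality identity $\delta_{\la\mu} = \ip{m_\la(x\|a)}{\hh_\mu(y\|a)} = \sum_\nu M_{\la\nu}\, N_{\mu\nu}$ gives $MN^T = I$, so $M_{\la\mu} = (N^{-1})_{\mu\la}$. To invert, I would write $N = I + N_0$ with $N_0$ strictly upper triangular with respect to partition size. The Neumann series $N^{-1} = \sum_{r \ge 0}(-N_0)^r$ then has only finitely many terms contributing to any fixed entry: if $((-N_0)^r)_{\mu\la}\ne 0$ there must be a chain $|\mu|<|\sigma_1|<\cdots<|\sigma_{r-1}|<|\la|$, forcing $r\le |\la|-|\mu|$. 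Hence $(N^{-1})_{\mu\la}\in\Z[a]$, $(N^{-1})_{\la\la}=1$, and $(N^{-1})_{\mu\la}=0$ unless $|\mu|<|\la|$ or $\mu=\la$. Translating back to $M$ yields all three claims.

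The only nontrivial computation is the integrality assertion $N_{k,\nu}\in\Z[a]$ extracted from \eqref{E:hhtos}; everything else is formal bookkeeping with dual bases and a triangular inversion.
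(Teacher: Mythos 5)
Your proof is correct and follows essentially the same route as the paper: both arguments rest on extracting from \eqref{E:hhtos} the unitriangularity $\hh_r(y\|a)\in h_r[y]+\sum_{|\nu|>r}\Z[a]\,h_\nu[y]$, multiplying to get the analogous statement for $\hh_\mu(y\|a)$, and then identifying $M_{\la\mu}$ with an entry of the inverse transition matrix via the duality of $\{m_\la(x\|a)\}$ with $\{\hh_\mu(y\|a)\}$ and of $\{m_\nu[x-\ap]\}$ with $\{h_\nu[y]\}$. You merely spell out the inversion (the Neumann series and the finiteness of contributing chains) that the paper leaves implicit.
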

\begin{proof} By duality, the coefficient of $m_\mu[x-\ap]$ in $m_\la(x\|a)$
is equal to the coefficient of $\hh_\la(y\|a)$ in $h_\mu[y]$. By \eqref{E:hhtos} we have
$\hh_r(y\|a) \in h_r[y] + \prod_{|\nu|>r} \Z[a] s_\nu[y]=h_r[y]+\prod_{|\nu|>r} \Z[a]h_\nu[y]$. It follows that
$\hh_\la(y\|a) \in h_\la[y] + \prod_{|\nu|>|\la|} \Z[a] h_\nu[y]$. In particular $\hh_\la(y\|a)$ appears in
$h_\la[y]$ with coefficient $1$ and does not appear in $h_\mu[y]$ unless $|\mu|<|\la|$.
\end{proof}

\begin{conj} $M_{\la\mu}=0$ unless $\mu\subset\la$. Moreover, in this case
$M_{\la\mu}\ne0$ unless $\mu=\emptyset$.
\end{conj}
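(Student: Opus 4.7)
The plan is to translate the conjecture via duality into a triangularity statement for the expansion of $\hh_\la(y\|a)$ in the classical $h$-basis. By the argument in the proof of Proposition~\ref{P:mm}, $M_{\la\mu}$ equals the coefficient of $\hh_\la(y\|a)$ in $h_\mu[y]$. Letting $A_{\la\mu}$ denote the coefficient of $h_\mu[y]$ in $\hh_\la(y\|a)$, the expansion $\hh_\la = \sum_\mu A_{\la\mu} h_\mu$ inverts to $h_\mu = \sum_\la (A^{-1})_{\mu\la} \hh_\la$, giving $M_{\la\mu} = (A^{-1})_{\mu\la}$. Thus the first assertion of the conjecture reduces to showing that $A$ is upper unitriangular with respect to the containment order on partitions: $A_{\la\mu} = 0$ unless $\la \subseteq \mu$, with $A_{\la\la} = 1$. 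Since the inverse of such a matrix is also upper unitriangular in the same partial order, this immediately yields $M_{\la\mu} = 0$ unless $\mu \subseteq \la$.

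To establish the upper-triangularity of $A$, first treat the single-row case $\la = (k)$: use the expansion \eqref{E:hhtos} combined with the identity $s_{(r,1^q)} = \sum_{j=0}^q (-1)^j h_{r+j} e_{q-j}$. Expanding $e_{q-j}$ further in the $h$-basis shows every contributing term is of the form $h_\mu$ with $\mu_1 \ge r+j \ge k+p \ge k$, hence $\mu \supseteq (k)$. For general $\la$, use multiplicativity $\hh_\la = \hh_{\la_1}\hh_{\la_2}\cdots$: if partitions $\mu^{(i)}$ satisfy $\mu^{(i)}_1 \ge \la_i$, then their sorted concatenation $\nu$ satisfies $\nu \supseteq \la$, by the pigeonhole observation that for each $j$, at least $j$ of the first parts $\mu^{(i)}_1$ (namely those with $i \le j$) satisfy $\mu^{(i)}_1 \ge \la_i \ge \la_j$, whence $\nu_j \ge \la_j$.

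The second assertion -- that $M_{\la\mu} \ne 0$ whenever $\emptyset \ne \mu \subseteq \la$ -- is the main obstacle. Writing $A = I + N$ with $N$ strictly upper triangular, one obtains the chain-sum formula
\[
M_{\la\mu} = \sum_{r\ge 0} (-1)^r \sum_{\mu = \nu_0 \subsetneq \nu_1 \subsetneq \cdots \subsetneq \nu_r = \la} A_{\nu_0\nu_1} A_{\nu_1\nu_2} \cdots A_{\nu_{r-1}\nu_r},
\]
and the difficulty is ruling out cancellation in this alternating sum. The base case $\la = (k)$ is confirmed by the direct formula $M_{(k),(j)} = (-1)^{k-j} e_{k-j}(a_1,a_0,\ldots,a_{2-k})$ for $1 \le j \le k$, which follows from Proposition~\ref{P:monomial} together with \eqref{E:mt} and the identity $p_j[x-\ap] = m_{(j)}[x-\ap]$; this is a nonzero polynomial in $a$. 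For general $\la$, the natural strategy is to select a monomial order on $\Z[a]$ (or a specialization of the $a_i$) in which some leading term of $M_{\la\mu}$ arises as the contribution of a \emph{unique} chain in the sum above, thereby preventing cancellation; alternatively, one could seek an explicit signed-count formula for $M_{\la\mu}$ -- perhaps over tableaux or saturated chains in Young's lattice -- whose non-vanishing is manifest. Showing that a chosen leading monomial survives the full alternating sum is where the real work lies.
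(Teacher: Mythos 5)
This statement appears in the paper only as a conjecture, with no proof given, so there is nothing of the authors' to compare your argument against; I can only assess it on its own terms. The first half of your proposal is, as far as I can check, a correct and complete proof of the \emph{first} assertion. The duality identification $M_{\la\mu}=(A^{-1})_{\mu\la}$ is the same one used in the proof of Proposition \ref{P:mm}; the single-row triangularity $A_{(k),\mu}=0$ unless $\mu_1\ge k$ does follow from \eqref{E:hhtos} together with the hook identity and the expansion of $e_m$ in the $h$-basis (every contributing $h_\mu$ has largest part at least $k+p+j\ge k$); the pigeonhole argument for products is sound; and the inversion is legitimate because each interval $[\mu,\la]$ in the containment order is finite, so the chain sum defining $(A^{-1})_{\mu\la}$ is a finite sum. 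The tables in Section 7 (e.g.\ the expansions of $m_{22}(x\|a)$, $m_{211}(x\|a)$, $m_{31}(x\|a)$) are consistent with the conclusion. This already sharpens the degree statement of Proposition \ref{P:mm} to a containment statement and is genuine progress on the conjecture.

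The gap is the second assertion, and you have named it yourself: nothing in the proposal establishes that $M_{\la\mu}\ne0$ whenever $\emptyset\ne\mu\subseteq\la$. The chain-sum expression is an alternating sum over chains in $[\mu,\la]$ whose entries $A_{\nu\nu'}$ are polynomials in the $a_i$ of mixed sign, and no mechanism is offered to rule out cancellation; the suggestion to find a monomial order or specialization in which a unique chain carries a leading term is a plan rather than an argument, and no candidate leading monomial is proposed for general $\la,\mu$. The single-row verification $M_{(k),(j)}=(-1)^{k-j}e_{k-j}(a_1,a_0,\dotsc,a_{2-k})$ is correct and settles that base case, but it gives no leverage on, say, $\mu=(1^j)\subseteq\la$ when $\la$ has several distinct parts. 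So the proposal proves half of the conjecture and leaves the substantive non-vanishing half open.
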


\section{Equivariant Homology of $\Gr$ via symmetric functions}
\label{sec:affine}

\subsection{The affine nilHecke algebra and the Peterson subalgebra}
Let $Q=\mathrm{Frac}(S)$ be the fraction field of $S$. Let $\A_Q$ be the twisted group
algebra given by the formal $Q$-linear combinations of elements of $\tS_n$
subject to the relations $w q = (w \cdot q) w$ for $q\in Q$ and $w\in \tS_n$.
$\A_Q$ acts on $Q$ where $\tS_n$ acts on $Q$ by the level zero action and $Q$
acts on itself by left multiplication. Define the elements $A_i = \alpha_i^{-1}(s_i-1)\in \A_Q$
for $i\in\Z/n\Z$. They satisfy only the relations $A_i^2=0$ and 
the same braid relations as simple reflections $s_i$ in $\tS_n$.
The \defn{affine nilCoxeter ring} $\A_0$ is
the $\Q$-subalgebra of $\A_Q$ generated by the $A_i$.
For $w\in \tS_n$ let $A_w = A_{i_1}\dotsm A_{i_N}$
where $w=s_{i_1}\dotsm s_{i_N}$ is a reduced decomposition. Then $\A_0=\bigoplus_{w\in\tS_n} \Q A_w$.

The algebra $\A_0$ acts on $S$ and $S$ acts on itself by left multiplication.
The (small torus) affine nilHecke ring $\A$ is the
$\Q$-subalgebra of $\A_Q$ generated by $\A_0$ and $S$. 
It has relations
$A_i f = A_i \cdot f + (s_i\cdot f) A_i$ for $i\in I$
and $f\in S$. One may show that $\A = \bigoplus_{w\in \tS_n} S A_w$.

The functions $\xi^v\in\Phi_\Fl$
of Proposition \ref{P:GKMGrAff} may be expressed by the following system of equations in $\A$ \cite{KK}:
\begin{align} \label{E:xiA}
  w = \sum_{v\in \tS_n} \xi^v(w) A_v\qquad\text{for $w\in\tS_n$.}
\end{align}
\begin{example} $s_i=1+\al_i A_i=A_{\id}+\al_i A_{s_i}$. Thus $\xi^{\id}(s_i)=1$, $\xi^{s_i}(s_i)=\alpha_i$,
and $\xi^v(s_i)=0$ for $ v\not\in \{\id,s_i\}$.
\end{example}

In other words, there is an $S$-bilinear perfect pairing $\Phi_\Fl\times \A \to S$
given by $\ip{f}{a}=f(a)$, with respect to which the bases
$\{\xi^v\mid v\in\tS_n\}$ and $\{A_v\mid v\in\tS_n \}$ are dual.
$f$ is understood to be a left $Q$-linear map $f:\A_Q\to Q$ 
in the sense that if $a=\sum_w q_w w$ for $q_w\in Q=\mathrm{Frac}(S)$ then $f(a)=\sum_w q_w f(w)$.

Let $\A \otimes_S\A$ be the $S$-module given by the quotient of $\A \otimes_{\Q} \A$
by the elements $sa \otimes a' - a \otimes s a'$ for $s\in S$ and $a,a'\in \A$.
Define the function $\Delta:\A \to \A \otimes_S \A$ by
\begin{align}
\Delta(A_i) &= A_i \otimes 1 + 1 \otimes A_i + \alpha_i A_i \otimes A_i \\
\Delta(A_w) &= \Delta(A_{i_1}) \dotsm \Delta(A_{i_N}) \qquad\text{where} \\
\notag
w &= s_{i_1}\dotsm s_{i_N} \qquad\text{is a reduced decomposition} \\
\Delta(\sum_w c_w A_w) &= \sum_w c_w \Delta(A_w)\qquad\text{for $c_w\in S'$.}
\end{align}
Here the product in $\Image(\Delta)$ is defined by
\begin{align*}
  (\sum_i a_i \otimes b_i)(\sum_j c_j \otimes d_j) = \sum_{i,j} a_i c_j \otimes b_i d_j.
\end{align*}
This product is ill-defined on all of $\A \otimes_S\A$.

$\Delta$ is left $S$-linear and a ring homomorphism $\A \to \Image(\Delta)$.

Let $\B=Z_{\A}(S)$, the centralizer subalgebra, called the \defn{Peterson subalgebra}.
\begin{theorem}[\cite{P}, see also \cite{Lam}] \label{T:Pet}
\
\begin{enumerate}
\item
There is a unique left $S$-basis $\{j_w\mid w\in \tS_n^0\}$ of $\B$
with the property that for every $w\in\tS_n^0$,
$j_w = A_w + \sum_{x\in\tS_n\setminus \tS_n^0} j_w^x A_x$
for some $j_w^x\in S$. 
\item
There is an isomorphism of $S$-Hopf algebras $H_T(\Gr) \cong \B$ such that
$j_w$ is the image of the Schubert basis element $[X_w]_T$ for $w\in \tS_n^0$.
\end{enumerate}
\end{theorem}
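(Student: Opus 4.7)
The overall strategy is to use the perfect $S$-bilinear pairing $\langle a,f\rangle=f(a)$ between $\A$ and $\Phi_\Fl$, under which $\{A_v\}_{v\in\tS_n}$ and $\{\xi^v\}_{v\in\tS_n}$ are dual bases by \eqref{E:xiA}. The plan is to show that this pairing restricts to a perfect pairing $\B\times\Phi_\Gr\to S$, from which both parts of the theorem will follow when combined with the GKM isomorphism $H^T(\Gr)\cong\Phi_\Gr$ of Theorem \ref{T:GKMaff}.

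For part (1), I would first verify that if $a\in\B$ and $f\in\Phi_\Gr$ then $f(a)\in S$ rather than only in $Q$. To construct $j_w$ with the stated triangular form $j_w=A_w+\sum_{x\in\tS_n\setminus\tS_n^0}j_w^x A_x$, I would analyze the condition $as=sa$ for $s\in S$ directly. Writing $a=\sum_v c_v A_v$ and using the defining relation $A_iq=(s_i\cdot q)A_i+A_i\cdot q$, the bracket $[a,s]$ expands in the $\{A_v\}$ basis as a linear combination whose vanishing coefficients give a triangular system (with respect to Bruhat order combined with length) of equations in the unknowns $c_v$. Normalizing $c_w=1$ and prescribing $c_v=0$ for all $v\in\tS_n^0$ with $v\ne w$, this system can be solved recursively to determine the remaining $c_x$ uniquely, producing the desired $j_w\in\B$. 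Uniqueness reduces to showing that any element of $\B$ whose $A_v$-coefficients for all $v\in\tS_n^0$ vanish must itself be zero; such an element pairs to zero against every $\xi^v$ with $v\in\tS_n^0$, and since these $\xi^v$ span $\Phi_\Gr$, perfectness of the restricted pairing forces the element to be zero.

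For part (2), the $S$-module isomorphism matching Schubert bases is then formal: $H_T(\Gr)$ is $S$-dual to $H^T(\Gr)\cong\Phi_\Gr$ with dual Schubert bases by the loop-group Hopf duality of Section~\ref{SS:Gr}, and $\B$ is dual to $\Phi_\Gr$ via $\{j_w\}\leftrightarrow\{\xi^w\}$. To upgrade this to a Hopf isomorphism, I would verify that $\Delta$ restricts to $\B$ by a direct computation using that $\Delta$ is a left $S$-linear ring homomorphism combined with the centralizer property, and then show that under the duality $\B\times\Phi_\Gr\to S$, the resulting coproduct on $\B$ is dual to the product on $\Phi_\Gr$ while the product on $\B$ is dual to the coproduct on $\Phi_\Gr$. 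The main obstacle is reconciling the algebraically defined $\Delta$ on $\A$ with the topologically defined Hopf structure on $H_T(\Gr)$ inherited from $\Omega_{{\rm pol}}SU(n)$; this requires showing both coproducts are characterized by the same duality with cup product on $H^T(\Gr)\cong\Phi_\Gr$, forcing agreement under the already established $S$-module isomorphism.
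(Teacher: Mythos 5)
The paper does not actually prove Theorem \ref{T:Pet}: it is imported wholesale from Peterson \cite{P} and \cite{Lam}, so there is no internal proof to compare against. Judged on its own terms, your proposal has genuine gaps in both parts.

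For part (1), the missing ingredient is the structure of $\B$ over the fraction field: since the level-zero action of $\tS_n=S_n\ltimes Q^\vee$ on $Q$ kills exactly the translations, one has $Z_{\A_Q}(Q)=\bigoplus_{\la\in Q^\vee}Q\,t_\la$, and the known proofs of existence and uniqueness of the $j_w$ run through the expansion $a=\sum_\la b_\la t_\la$ and an analysis of the values $\xi^v(t_\la u)$. Your substitute arguments do not close. The ``triangular system'' coming from $[a,s]=0$ is not triangular in a usable direction: for $s$ of degree one, $A_xs=(x\cdot s)A_x+\sum_{y\lessdot x}n_{xy}(s)A_y$ with $n_{xy}(s)\in\Q$, so the vanishing of the coefficient of $A_y$ in $[a,s]$ reads $c_y(y\cdot s-s)+\sum_{x\gtrdot y}c_xn_{xy}(s)=0$; each equation constrains the coefficients at covers of $y$, the recursion propagates upward with no a priori bound on the support, and at each rank one must solve a linear system whose consistency and unique solvability are exactly the content of the theorem --- you assert rather than prove them. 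Worse, your uniqueness argument is circular: you deduce that an element of $\B$ whose $A_v$-coefficients vanish for all $v\in\tS_n^0$ must be zero from ``perfectness of the restricted pairing $\B\times\Phi_\Gr\to S$,'' but since $\ip{\xi^v}{a}$ \emph{is} the $A_v$-coefficient of $a$, that perfectness is equivalent to the injectivity statement being proved, and you never establish it independently.

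For part (2), once (1) and the perfect pairing are in hand, matching the $S$-module structures and identifying the coproduct on $\B$ as dual to the cup product on $\Phi_\Gr$ is indeed routine Kostant--Kumar duality. The substantive claim, however, is that the Pontryagin product on $H_T(\Gr)$, induced by the group structure of $\Omega_{{\rm pol}}SU(n)$, corresponds to the restriction to $\B$ of the nilHecke product on $\A$. Saying that ``both coproducts are characterized by the same duality with cup product'' begs precisely this question: one must explain why the topologically defined coproduct on $H^T(\Gr)$ is dual to the nilHecke multiplication, which requires genuine geometric input (the $\A$-module structure on $H_T(\Gr)$ and its compatibility with the loop-group multiplication). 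That is the heart of Peterson's theorem and is absent from your sketch.
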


The Hopf-duality between $H^T(\Gr)$ and $H_T(\Gr)$ induces a Hopf-duality between $\Phi_\Gr$ and $\B$,
with respect to which $\{\xi^v\mid v\in \tS_n^0\}$ and $\{j_v\mid v\in\tS_n^0\}$
are dual bases. The pairing $\Phi_\Fl\times \A\to S$ restricts to 
a perfect pairing $\Phi_\Gr\times \B\to S$. For $v,w\in\tS_n^0$ we have
\begin{align}\label{E:xijdual}
  \ip{\xi^v}{ j_w} &= \delta_{vw}
\end{align}
by \eqref{E:xiA} and the characterization of the $j_w$ basis in Theorem \ref{T:Pet}.

By special classes for $\B$ we mean the elements $j_i = j_{\rho^i} \in \B$.  Define the elements $\tilde{c}_i^{r\ell}(a)\in S$ and $c_i^{r\ell}(a)\in\Q[a]$ by
\begin{align}
  \Delta(j_i) &= \sum_{r,\ell\ge0} \tilde{c}_i^{r\ell}(a) \,j_r \otimes j_\ell \\
  \Delta \hh_i(x\|a) &= \sum_{r,\ell\ge0} c_i^{r\ell}(a) \; \hh_r(x\|a) \otimes \hh_\ell(x\|a).
\end{align}
By Remark \ref{R:ha}, we have $c_i^{r\ell}(a) \in \Q[a_1,a_0,a_{-1},\dotsc,a_{2-n}]$.
\begin{prop}\label{P:generators} The elements
$\{\hh_1(y\|a),\hh_2(y\|a),\ldots,\hh_{n-1}(y\|a)\}$ are algebraically independent over $\Q[a_1,a_0,a_{-1},\dotsc,a_{2-n}]$.
Moreover
\begin{align}\label{E:coprodconsts}
  c_i^{r\ell}(a) = \tilde{c}_i^{r\ell}(a).
\end{align}
where $c_i^{r\ell}(a)$ is considered as an element of $S$.
\end{prop}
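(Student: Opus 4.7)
The plan is to handle the two assertions separately.

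For the algebraic independence of $\hh_1(y\|a),\dotsc,\hh_{n-1}(y\|a)$, I would use the explicit Schur expansion \eqref{E:hhtos}, from which one reads off $\hh_k(y\|a) = h_k[y] + (\text{terms of strictly higher } y\text{-degree})$. Given a hypothetical nontrivial relation $\sum_\alpha c_\alpha(a)\,\hh_1^{\alpha_1}\dotsm\hh_{n-1}^{\alpha_{n-1}}=0$ with $c_\alpha(a)\in\Q[a_1,a_0,\dotsc,a_{2-n}]$, set $|\alpha|:=\sum_i i\alpha_i$ and extract the $y$-homogeneous piece at the minimal level $d_0:=\min\{|\alpha|:c_\alpha\ne 0\}$. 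That yields $\sum_{|\alpha|=d_0}c_\alpha(a)\prod_i h_i[y]^{\alpha_i}=0$, a nontrivial relation among the algebraically independent classical $h_i[y]$ (independence being valid over any coefficient ring), a contradiction.

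For the coproduct identity $c_i^{r\ell}(a)=\tilde c_i^{r\ell}(a)$, the plan is to show that both families are characterized by the same polynomial identity in an auxiliary variable $t$. Set $E_k(t):=\prod_{j=1}^k(t-a_{2-j})$ and $\Omega(y,t):=\prod_i(1-a_1y_i)/(1-ty_i)$. Rewritten, \eqref{E:hhGF} reads $\sum_k\hh_k(y\|a)\,E_k(t)=\Omega(y,t)$. Applying $\Delta$ (splitting $y$ plethystically) and using $\Omega(y'+y'',t)=\Omega(y',t)\,\Omega(y'',t)$, matching coefficients of $\hh_r(y'\|a)\otimes\hh_\ell(y''\|a)$ yields
\[\sum_k c_k^{r\ell}(a)\,E_k(t)\;=\;E_r(t)\,E_\ell(t),\]
which uniquely characterizes the $c_k^{r\ell}$ since $\{E_k(t)\}_{k\ge 0}$ is a $\Q[a]$-basis of $\Q[a][t]$.

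On the Peterson side, Hopf duality between $\B$ and $\Phi_\Gr$ together with \eqref{E:xijdual} gives $\tilde c_i^{r\ell}=\ip{\xi^{\rho^r}\xi^{\rho^\ell}}{j_i}$, i.e., the coefficient of $\xi^{\rho^i}$ in the Schubert expansion of the product $\xi^{\rho^r}\xi^{\rho^\ell}\in\Phi_\Gr$. Equivalently, $\tilde c=c$ amounts to the ``group-like'' identity $\Delta J(t)=J(t)\otimes J(t)$ in $\B\otimes_\Q\Q[a][t]$, where $J(t):=\sum_{k=0}^{n-1}j_k\,E_k(t)$. By Lemma \ref{L:smallclasses} and Theorem \ref{T:CohomGr}, $\epsilon_\Gr(h_r(x\|a))=\xi^{\rho^r}$, so $\xi^{\rho^r}\xi^{\rho^\ell}=\epsilon_\Gr(h_r(x\|a)h_\ell(x\|a))$, and thus $\tilde c_i^{r\ell}$ becomes an expansion coefficient of a product in $\Lambda^{(n)}(x\|a)$. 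The main obstacle is matching the special-class Schubert expansion on the $\Phi_\Gr$ side with the $\hh$-basis expansion on the $\hL(y\|a)$ side without assuming a priori the Hopf morphism $\hh_r\mapsto j_r$ that the proposition is used to establish later. To bypass this circularity, my plan is to set up a compatible pairing $\Lambda(x\|a)\times\B\to S$ via $(f,b)\mapsto\ip{\epsilon_\Gr(f)}{b}$ and leverage a Cauchy-type generating-function identity dual to \eqref{E:hhGF}, which should let one read off the $\{E_k(t)\}$-expansion of $\tilde c_k^{r\ell}$ directly and identify it with the one already derived for $c_k^{r\ell}$.
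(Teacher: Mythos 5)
Your treatment of the first assertion is fine: the lowest-$y$-degree extraction from \eqref{E:hhtos} gives a correct, self-contained proof of algebraic independence (the paper simply cites Molev here). Your derivation of $\sum_{k\ge0}c_k^{r\ell}(a)E_k(t)=E_r(t)E_\ell(t)$ from \eqref{E:hhGF} is also correct, and your identification $\tilde c_i^{r\ell}=\ip{\xi^{\rho^r}\xi^{\rho^\ell}}{j_i}=$ (coefficient of $\xi^{\rho^i}$ in $\xi^{\rho^r}\xi^{\rho^\ell}$) is exactly the paper's step matching its items (3) and (4).

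However, the second half has a genuine gap, and the one concrete bridge you propose is false. The ``group-like'' identity $\Delta J(t)=J(t)\otimes J(t)$ with $J(t)=\sum_{k=0}^{n-1}j_kE_k(t)$ would force $\sum_{i=0}^{n-1}\tilde c_i^{r\ell}E_i(t)=E_r(t)E_\ell(t)$ in $S[t]$, which is impossible on degree grounds: the left side has $t$-degree at most $n-1$ while the right side has $t$-degree $r+\ell$. Concretely, for $n=2$ one has $E_1(t)^2=E_2(t)+(a_0-a_1)E_1(t)$, so $c_2^{11}=1\ne0$; the coefficients $c_k^{r\ell}$ with $k\ge n$ do not vanish and have no counterpart in $\B$, so $c_i^{r\ell}=\tilde c_i^{r\ell}$ for $i\le n-1$ cannot be read off from the generating function alone. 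The missing step — and the one your closing ``should let one read off'' does not supply — is precisely how the paper finishes: by Lemma \ref{L:Hopfduality} the coefficient of $\hh_r(y\|a)\otimes\hh_\ell(y\|a)$ in $\Delta\hh_i(y\|a)$ equals the coefficient of $h_i(x\|a)$ in $h_r(x\|a)h_\ell(x\|a)$ on the \emph{cohomology} side, and then Theorem \ref{T:CohomGr} together with \eqref{E:onerowclasses} transports that product structure constant to the coefficient of $\xi^{\rho^i}$ in $\xi^{\rho^r}\xi^{\rho^\ell}$. Note that this route never invokes the homology map $\hh_r\mapsto j_r$, so the circularity you were trying to avoid does not actually arise; by steering away from it you ended up omitting the argument that does the work. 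You should replace the generating-function comparison for $\tilde c$ by the duality chain $c_i^{r\ell}\;=\;\ip{h_r(x\|a)\otimes h_\ell(x\|a)}{\Delta\hh_i(y\|a)}\;=\;\ip{h_r(x\|a)h_\ell(x\|a)}{\hh_i(y\|a)}$, identify this with the $\xi^{\rho^i}$-coefficient via $\epsilon_\Gr$, and then conclude with the $\B$--$\Phi_\Gr$ duality step you already have.
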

\begin{proof}
The ``algebraic independence'' statement follows from Molev \cite{Mol}.
We have the following coefficients:
\begin{enumerate}
\item  $c_i^{r\ell}(a)$.
\item The coefficient of $h_i(x\|a)$ in $h_r(x\|a)h_\ell(x\|a)$.
\item The coefficient of $\xi^{\rho^i}$ in $\xi^{\rho^r}\xi^{\rho^\ell}$ in $\Phi_\Gr$.
\item $\tilde{c}_i^{r\ell}(a)$.
\end{enumerate}
(1) and (2) are equal by Lemma \ref{L:Hopfduality}.
By Theorem \ref{T:CohomGr} and \eqref{E:onerowclasses} (2) and (3) agree.
The Hopf duality of $\B$ and $\Phi_\Gr$ and \eqref{E:xijdual} imply that (3) and (4) agree.
\end{proof}

\subsection{Homology symmetric function ring}
Define the following subrings of $\hL(y\|a)$:
\begin{align*}
\La_{(n)}(y\|a)&=\Q[a_1,a_0,a_{-1},\dotsc,a_{2-n}][\hh_1(y\|a),\hh_2(y\|a),\dotsc,\hh_{n-1}(y\|a)] \\
\hL'_{(n)}(y\|a)&=\Q[a_1,a_0,a_{-1},\dotsc,a_{2-n}][[\hh_1(y\|a),\hh_2(y\|a),\dotsc,\hh_{n-1}(y\|a)]].
\end{align*}
$\La_{(n)}(y\|a)$ is a $\Q[a_1,a_0,a_{-1},\dotsc,a_{2-n}]$-Hopf subalgebra of $\hL(y\|a)$,
as it is clear from the earlier discussion that $\La_{(n)}(y\|a)$ is indeed closed under the coproduct.
We shall consider $\La_{(n)}(y\|a)$ as a Hopf algebra over $S$ in the obvious manner.
$\hL'_{(n)}(y\|a)$ is a completion of $\La_{(n)}(y\|a)$.

\begin{prop}\label{P:mbasis} $\La^{(n)}(x\|a)$ and $\La_{(n)}(y\|a)$
are dual Hopf algebras over $S$ with dual $S$-bases given by 
$\{m_\la(x\|a) \mid \la_1 < n\}$ and 
$\{\hh_\la(y\|a)\mid \la_1 < n\}$.
\end{prop}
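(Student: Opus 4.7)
The plan is to establish the two basis statements in turn and then descend Hopf duality from the already-established duality between $\La(x\|a)$ and $\hL(y\|a)$.

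First I would handle the basis of $\La_{(n)}(y\|a)$. By Proposition \ref{P:generators}, the elements $\hh_1(y\|a),\dotsc,\hh_{n-1}(y\|a)$ are algebraically independent over $\Q[a_1,\dotsc,a_{2-n}]$, so the products $\hh_\la(y\|a)$ with $\la_1<n$ form a $\Q[a_1,\dotsc,a_{2-n}]$-basis of $\La_{(n)}(y\|a)$; extending scalars to $S$ gives the desired $S$-basis.

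The main work is the basis statement for $\La^{(n)}(x\|a)$, which reduces to the identification $I_n=\bigoplus_{\mu_1\ge n}\Q[a]\,m_\mu(x\|a)$. Equation \eqref{E:Imtbasis} gives $I_n=\bigoplus_{\la_1\ge n}\Q[a]\,\mt_\la(x\|a)$. The change of basis $\mt_\la=\sum_\mu L_{\la\mu}m_\mu$ from Proposition \ref{P:monomial} is triangular: $L_{\la\mu}=0$ unless $\la$ is obtained from $\mu$ by splitting parts, which forces $\mu_1\ge\la_1$, and $L_{\la\la}=\prod_i m_i(\la)!$ is a unit in $\Q[a]$. Hence $\la_1\ge n$ implies $\mu_1\ge n$ for every $\mu$ appearing in $\mt_\la$, giving one inclusion. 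Inverting the triangular system within each fixed weight, where the refinement order has finite fibers and the diagonal is invertible, one expresses each $m_\la$ as a $\Q[a]$-combination of $\mt_\mu$ with $\mu_1\ge\la_1$, giving the reverse inclusion. Passing to the quotient and tensoring with $S$ then yields the claimed $S$-basis $\{m_\la(x\|a):\la_1<n\}$.

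With both bases in hand, I would descend the pairing $\ip{\cdot}{\cdot}\colon \La(x\|a)\times\hL(y\|a)\to\Q[a]$ to an $S$-bilinear pairing on $\La^{(n)}(x\|a)\times\La_{(n)}(y\|a)$ landing in $S$. The descent condition $\ip{I_n}{\La_{(n)}(y\|a)}=0$ follows from Hopf duality: for $f\in\La(x\|a)$, $r\ge n$, and $g\in\La_{(n)}(y\|a)$, Lemma \ref{L:Hopfduality} gives $\ip{f\,\mt_r(x\|a)}{g}=\ip{f\otimes \mt_r(x\|a)}{\Delta g}$; since $\La_{(n)}(y\|a)$ is a Hopf subalgebra, both tensor factors of $\Delta g$ lie in $\La_{(n)}(y\|a)$, so the right factor is an $S$-combination of $\hh_\nu(y\|a)$ with $\nu_1<n\le r$, hence annihilated by $\mt_r(x\|a)$ thanks to \eqref{E:orthopart}. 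The dual-basis identity $\ip{m_\la}{\hh_\mu}=\delta_{\la\mu}$ is then just \eqref{E:doublem} restricted to $\la_1,\mu_1<n$.

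Finally, to upgrade to dual \emph{Hopf} algebras I would check that $I_n$ is a Hopf ideal, so that $\La^{(n)}(x\|a)$ inherits a quotient Hopf structure from $\La(x\|a)$. It is an ideal by construction, and primitivity of each $\mt_r(x\|a)$ gives $\Delta(f\,\mt_r)=\Delta(f)(\mt_r\otimes 1+1\otimes \mt_r)\in I_n\otimes \La(x\|a)+\La(x\|a)\otimes I_n$ together with $\epsilon(\mt_r)=0$ for $r\ge1$. Hopf duality of the resulting pair is then a formal consequence of the Hopf duality between $\La(x\|a)$ and $\hL(y\|a)$. The main obstacle is the $m$-basis identification of $I_n$ through Proposition \ref{P:monomial}; once that is in place, the descent of the pairing and the Hopf-ideal property are essentially routine.
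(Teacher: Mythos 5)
Your proposal is correct and follows essentially the same route as the paper: both arguments hinge on the description $I_n=\bigoplus_{\la_1\ge n}\Q[a]\,\mt_\la(x\|a)$ from \eqref{E:Imtbasis}, the triangularity of the $\mt_\la(x\|a)$ against the $m_\la(x\|a)$ from Proposition \ref{P:monomial} to transfer this to the double monomial basis, and the primitivity of the $\mt_r(x\|a)$ to see that $I_n$ is a Hopf ideal and that the pairing descends. You merely spell out some steps the paper leaves implicit (inverting the refinement-triangular system, and the verification that $\ip{I_n}{\La_{(n)}(y\|a)}=0$).
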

\begin{proof} 
The duality is clear from the definition of $m_\la(x\|a)$.

Since $I_n$ is generated by primitive elements
it follows that $\La^{(n)}(x\|a)$ has an induced $S$-Hopf algebra structure.  It remains to show that $\{m_\la(x\|a) \mid \la_1 < n\}$ is indeed a $S$-basis for $\La^{(n)}(x\|a)$.  The following hold.
\begin{enumerate}
\item
$\{\mt_\la(x\|a)\mid \la\in\Y\}$ is a $\Q[a]$-basis of $\La(x\|a)$
and 
$\{1\otimes \mt_\la(x\|a)\mid \la\in\Y\}$ is an $S$-basis of $S\otimes_{\Q[a]} \La(x\|a)$
\item
$\{\mt_\la(x\|a)\mid \la_1\ge n\}$ is a $\Q[a]$-basis of $I_n$ and
$\{1 \otimes \mt_\la(x\|a)\mid \la_1\ge n\}$ is an $S$-basis of $S \otimes_{\Q[a]} I_n$.
\end{enumerate}
(1) holds since the $\mt_\la(x\|a)$ are unitriangular over $\Q[a]$ with 
the $\Q[a]$-basis $\{p_\la[x-\ap]\mid \la\in\Y\}$ of $\La(x\|a)$.
(2) can be deduced from the definition of $I_n$ and (1).

By Proposition \ref{P:monomial} the $\mt_\la(x\|a)$ are triangular with the $m_\la(x\|a)$ 
with diagonal coefficients in $\Q^\times$. It follows that all of the above basis statements
remain true with $m_\la(x\|a)$ replacing $\mt_\la(x\|a)$. It follows that $\{1\otimes (\mt_\la(x\|a)+I_n)\mid \la_1<n\}$
is an $S$-basis of $\La^{(n)}(x\|a)$.
\end{proof}

\section{Affine double Stanley, affine double Schur and $k$-double Schur functions}
Throughout this section let $k=n-1$.

\subsection{Definitions}
We define the \defn{affine double Stanley symmetric functions} $\tF_w(x\|a) \in \La^{(n)}(x\|a)$ by the following
equation in $\La^{(n)}(x\|a) \otimes_S \A$:
\begin{equation}\label{E:defneqaffineStanley}
\sum_{w\in\tS_n} \tF_w(x\|a) A_w = \sum_{\mu_1<n}  m_\la(x\|a) j_{\mu_1} j_{\mu_2}\dotsm.
\end{equation}
Restricting to Grassmannian permutations $w\in\tS_n^0$ 
we obtain the sub-family of \defn{affine double Schur functions} 
\begin{align}\label{E:equivaffSchur}
\tF_\la(x\|a):=\tF_{w_\la^\af}(x\|a)\qquad\text{for $\la_1<n$.}
\end{align}

\begin{remark}
The $\tF_w(x\|a)$ defined in \eqref{E:defneqaffineStanley} lie in the quotient $\Lambda^{(n)}(x\|a)$.  In fact, one obtains a distinguished polynomial representative, since $\{m_\lambda(x\|a) \mid \lambda_1 < n\}$ form a basis. 
\end{remark}
Taking the coefficient of $\tF_\la(x\|a)$ in \eqref{E:defneqaffineStanley}
we obtain two equivalent definitions of the
\defn{equivariant $k$-Kostka} matrix $(\hK^{(k)}_{\la\mu}(a)\mid \la_1,\mu_1<n)$.
\begin{align}\label{E:eqkKostka}
  j_{\mu_1} j_{\mu_2}\dotsm &= \sum_{\la_1<n} \hK^{(k)}_{\la\mu}(a) j_\la \\
  \tF_\la(x\|a) &= \sum_{\mu_1<n} \hK^{(k)}_{\la\mu}(a) m_\mu(x\|a).
\end{align}

\begin{remark} An explicit positive expression for the Pieri coefficients
giving the Schubert expansion of $j_r j_\nu$ has been obtained in \cite{LS:Pieri}.
Iterating the equivariant Pieri rule gives an interpretation for the equivariant
$k$-Kostka matrix. However to make the definition of $\tF_w(x\|a)$ completely explicit combinatorially, 
one also requires an explicit formula for the double monomial basis $m_\la(x\|a)$, which we do not have.
\end{remark}

\begin{prop}
The set $\{\tF_\la(x\|a)\mid \la_1<n \}$ forms a basis of $\Lambda^{(n)}(x\|a)$ over $S$.
\end{prop}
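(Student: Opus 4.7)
My plan is to invert the defining identity \eqref{E:defneqaffineStanley}. Extracting the coefficient of $A_{w_\la^\af}$ on both sides (and using that $j_\nu = A_{w_\nu^\af} + \sum_{x\notin\tS_n^0} j_\nu^x A_x$ by Theorem~\ref{T:Pet}(1)) gives
$$\tF_\la(x\|a) = \sum_{\mu_1<n} \hK^{(k)}_{\la\mu}(a)\, m_\mu(x\|a).$$
By Proposition~\ref{P:mbasis}, $\{m_\mu(x\|a) \mid \mu_1<n\}$ is already an $S$-basis of $\La^{(n)}(x\|a)$, so it suffices to show that the equivariant $k$-Kostka matrix $(\hK^{(k)}_{\la\mu}(a))_{\la_1,\mu_1<n}$ is invertible over $S$.

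I would then exploit the natural grading in which $\deg x_i = \deg a_i = 1$ and $\deg A_w = -\ell(w)$. Under these conventions $m_\mu(x\|a)$ has degree $|\mu|$, the Peterson element $j_\nu$ has degree $-|\nu|$, and $\tF_\la(x\|a)$ has degree $|\la|$; so $\hK^{(k)}_{\la\mu}(a) \in S$ is homogeneous of degree $|\la|-|\mu|$. This forces $\hK^{(k)}_{\la\mu}(a) = 0$ whenever $|\mu|>|\la|$, and when $|\la|=|\mu|$ the entry is a rational constant equal to its specialization at $a=0$. Ordering partitions by size thus exhibits the transition matrix as block lower triangular with finite $\Q$-matrix diagonal blocks.

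To identify these diagonal blocks, I would specialize $a=0$ throughout \eqref{E:defneqaffineStanley}, which transports $j_r$, $m_\mu(x\|a)$, and $\tF_\la(x\|a)$ to their non-equivariant counterparts; in particular $\tF_\la(x\|0)$ becomes the affine Schur function $F_\la$ of \cite{LamAS}. The size-$d$ diagonal block of $(\hK^{(k)}_{\la\mu}(a))$ is then the classical $k$-Kostka matrix, which is invertible because the non-equivariant $F_\la$ with $\la_1<n$ form a $\Q$-basis of $\La^{(n)}$ by \cite{LamAS, Lam}. Block lower triangularity with invertible diagonal blocks then yields invertibility of the full matrix over $S$.

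The main obstacle I anticipate lies in verifying cleanly that the $a=0$ specialization of each ingredient --- the Peterson subalgebra element $j_r$, Molev's monomial symmetric function $m_\mu(x\|a)$, and the defining equation \eqref{E:defneqaffineStanley} itself --- reproduces its non-equivariant counterpart in \cite{Lam, LamAS}. This is essentially bookkeeping from the constructions but requires an explicit check before the reduction to the known non-equivariant basis statement is legitimate. Once that is in hand, the grading-based triangular argument completes the proof.
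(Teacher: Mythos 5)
Your proposal is correct and follows essentially the same route as the paper: reduce via Proposition \ref{P:mbasis} to invertibility of the equivariant $k$-Kostka matrix, use the grading of the affine nilHecke ring to show $\hK^{(k)}_{\la\mu}(a)$ is homogeneous of degree $|\la|-|\mu|$ (hence vanishes for $|\mu|>|\la|$ and is constant for $|\mu|=|\la|$), and identify the equal-degree part with the non-equivariant affine Stanley/Schur coefficients, whose triangularity is known from \cite{LamAS}. The paper sharpens your ``invertible diagonal blocks'' to unitriangularity with respect to dominance order, but the argument is the same.
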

\begin{proof} By Proposition \ref{P:mbasis} it suffices to show that the equivariant $k$-Kostka matrix is
unitriangular for a suitable order on partitions.
The affine nilHecke ring is graded with $\deg(A_i) = 1$ and $\deg(a_i) = -1$.  It follows that the coefficient of $A_w$ in $j_\nu =  j_{\nu_1} j_{\nu_2}\dotsm$ is a polynomial of degree $\ell(w) - |\nu|$ in $S$.  If $|\nu| = \ell(w)$, then this coefficient is simply the coefficient of $m_\nu$ in the affine Stanley symmetric function defined in \cite{LamAS}.
Thus it follows from the triangularity results for affine Schur functions that
$$
\tF_\lambda(x\|a) = m_\lambda(x\|a) + \sum_{\mu \prec \la}\hK^{(k)}_{\la\mu}(a)\,m_\mu(x\|a) + \sum_{|\nu| <|\la|} \hK^{(k)}_{\la\nu}(a)\, m_\nu(x\|a).
$$
Here $\prec$ denotes dominance order on partitions of the same size, and one has $\hK^{(k)}_{\la\mu}(a) \in \Z$ and 
$\hK^{(k)}_{\la\nu}(a) \in S$ with degree $|\la|-|\nu|$.
\end{proof}

Define the \defn{$k$-double Schur functions} $\{s^{(k)}_\la(y\|a) \mid \la_1<n\} \subset \hL'_{(n)}(y\|a)$ to be dual to the basis $\{\tF_\la(x\|a)\mid \la_1<n \} \subset \La^{(n)}(x\|a)$:
$$
\ip{\tF_\la(x\|a)}{s^{(k)}_\mu(y\|a)}= \delta_{\la\mu}.
$$
Such elements $s^{(k)}_\mu(y\|a)$ exist and are unique because of the triangularity of $\tF_\la(x\|a)$ with respect to the $m_\lambda(x\|a)$ basis.  The $k$-double Schur functions $s^{(k)}_\mu(y\|a)$ are infinite linear combinations of the $\hh_\nu(y\|a)$, the lowest degree term of which is the image of the usual ungraded $k$-Schur function under the substitution $h_i \mapsto \hh_i(y\|a)$.

\begin{prop}
Setting $a_i = 0$ in $\tF_w(x\|a)$ and $s^{(k)}_\lambda(y\|a)$ recovers the usual affine Stanley symmetric function $\tF_w(x)$ and (ungraded) $k$-Schur function $s^{(k)}_\la(y)$ respectively.
\end{prop}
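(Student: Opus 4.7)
The plan is to apply the specialization $a\mapsto 0$ to the defining equation \eqref{E:defneqaffineStanley} and recognize the specialized identity as the non-equivariant definition of affine Stanley symmetric functions from \cite{Lam,LamAS}; the $k$-Schur statement will then follow by duality.

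First I would check that the base change $S \to \Q$, $a_i\mapsto 0$, is compatible with all the relevant structures. From the generating function \eqref{E:hhGF}, $\La_{(n)}(y\|a)$ specializes to $\Q[h_1,\dotsc,h_{n-1}]$ with $\hh_k(y\|a) \mapsto h_k(y)$; by Proposition \ref{P:mbasis}, $\La^{(n)}(x\|a)$ specializes to its non-equivariant analog $\La^{(n)}$ with $m_\mu(x\|a) \mapsto m_\mu(x)$; and the affine nilHecke ring $\A$ specializes to the affine nilCoxeter ring $\A_0$. The main technical point is that the Peterson basis element $j_w \in \B$ specializes to the non-equivariant Fomin-Stanley element $j^{(0)}_w \in \A_0$. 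I would deduce this from the non-equivariant analog of the uniqueness in Theorem \ref{T:Pet}(1): $j_w|_{a=0}$ retains the form $A_w + \sum_{x\notin\tS_n^0} c^x_w A_x$, and by specializing the centralizer condition (using Proposition \ref{P:generators} to see that the $j_r$'s generate $\B$ as an $S$-algebra), the result lies in the image of the non-equivariant Peterson homomorphism, so the characterization from \cite{Lam} pins it down as $j^{(0)}_w$.

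Next I would specialize both sides of \eqref{E:defneqaffineStanley}. Because the $j_r$ commute (as $\B \cong H_T(\Gr)$ is commutative) and hence so do their specializations, the right side becomes
\[
\sum_{\mu_1 < n} m_\mu(x)\, j^{(0)}_{\mu_1} j^{(0)}_{\mu_2}\dotsm \;=\; \prod_{i \ge 1}\Bigl(\,\sum_{k=0}^{n-1} x_i^k\, j^{(0)}_k\Bigr),
\]
which is precisely the generating function defining the non-equivariant affine Stanley symmetric functions $\tF_w(x)$ through the Fomin-Stanley subalgebra realization of $H_*(\Gr)$ developed in \cite{Lam,LamAS}. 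Matching $A_w$-coefficients yields $\tF_w(x\|a)|_{a=0} = \tF_w(x)$.

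Finally, for the $k$-double Schur functions I would argue by duality. The perfect $S$-bilinear pairing $\La^{(n)}(x\|a)\times \La_{(n)}(y\|a)\to S$ specializes under $a\mapsto 0$ to the standard perfect pairing $\La^{(n)}\times \Q[h_1,\dotsc,h_{n-1}]\to \Q$. Since $\tF_\la(x\|a)|_{a=0} = \tF_\la(x)$, uniqueness of the dual basis forces $s^{(k)}_\mu(y\|a)|_{a=0}$ to be dual to $\{\tF_\la(x)\}$, which is precisely the defining property of $s^{(k)}_\mu(y)$ in \cite{LamAS,LM}. A small subtlety is that $s^{(k)}_\mu(y\|a)$ only lies in the completion $\hL'_{(n)}(y\|a)$, but the higher-order terms in its $\hh_\nu$-expansion have coefficients that are homogeneous of strictly positive degree in $a$ and thus vanish upon specialization, so the specialization lies in $\Lambda_{(n)}$. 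The principal obstacle is the identification $j_w|_{a=0} = j^{(0)}_w$; once this is secured, the remainder of the argument is essentially formal.
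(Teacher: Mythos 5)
Your proof is correct and follows essentially the same route as the paper's: specialize $m_\la(x\|a)\mapsto m_\la(x)$ and $\hh_i(y\|a)\mapsto h_i(y)$, then compare the specialized defining equation \eqref{E:defneqaffineStanley} with the non-equivariant definitions in \cite{Lam,LamAS}, with the dual statement for $s^{(k)}_\la$ following by duality. The paper's own proof is a two-line version of this, treating the key point you isolate --- that $j_r|_{a=0}$ is the non-equivariant special class, so that the degree-zero part of the coefficient of $A_w$ in $j_{\mu_1}j_{\mu_2}\dotsm$ is the affine Stanley coefficient --- as already established by the grading observation in the preceding basis proposition together with \cite{Lam}.
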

\begin{proof}
Since $m_\la(x\|a)|_{a_i = 0}$ is the usual monomial symmetric function $m_\la(x)$, and $\hh_i(y\|a)|_{a_i = 0}$ is the usual homogeneous symmetric function $h_i(y)$, this follows immediate by comparing with the definitions \cite{LamAS}.
\end{proof}

By Proposition \ref{P:generators} there is a well-defined $S$-Hopf morphism
\begin{align}\label{E:hommap}
  \La_{(n)}(y\|a)&\overset{\hommap'}\to\B \\ 
\hh_i(y\|a)&\mapsto  j_i\qquad\text{for $1\le i\le n-1$.}
\end{align}
The image $\hommap'(\La_{(n)}(y\|a))$ is not the whole of $\B$, as the following example shows.

\begin{example}\label{X:specialnongenerators}
Let $n=2$ and $\alpha=\alpha_1$. We have $j_{\id}=1$ and for $k\ge1$,
\begin{align}\label{E:n=2j}
  j_{s_0(s_1s_0)^{k-1}} &= A_{s_0 (s_1s_0)^{k-1}} + A_{s_1 (s_0s_1)^{k-1}} - \alpha A_{(s_0s_1)^k} \\
  j_{(s_1s_0)^k} &= A_{(s_1s_0)^k} + A_{(s_0s_1)^k}.
\end{align}
We deduce that $j_{s_0} j_{s_1s_0} = j_{s_0s_1s_0}$,
$j_{s_0}^2 = j_{s_1s_0} - \alpha j_{s_0s_1s_0}=j_{s_1s_0} (1 - \alpha j_{s_0})$. Therefore
$j_{s_1s_0} = j_{s_0}^2(1-\alpha j_{s_0})^{-1} = j_{s_0}^2 + \alpha j_{s_0}^3 + \alpha^2 j_{s_0}^4 +\dotsm$.
\end{example}

Let $\hB=\prod_{w\in\tS_n^0} S j_w.$ One checks in a straightfoward manner that $\hB$ is a ring.  The map $\hommap'$ extends to a ring homomorphism $\hL'_{(n)}(y\|a) \to \hB$, which we still denote by $\hommap'$.  We define $\hL_{(n)}(y\|a) \subset \hL'_{(n)}(y\|a)$ to be the $S$-span of the elements $s^{(k)}_\la(y\|a)$.  We shall presently show that $\hL_{(n)}(y\|a)$ is a ring, and that $\hommap'$ gives an isomorphism $\hL_{(n)}(y\|a) \simeq \B$.

\subsection{Main theorem}

Denote by $\hommap$ the composition of $\hommap'$ with the isomorphism $\B \simeq H_T(\Gr)$.  Thus $\hommap: \hL'_{(n)}(y\|a) \to \hat H_T(\Gr)$, where $\hat H_T(\Gr)=\prod_{w\in \tS_n^0} S [X_w]_T$.

\begin{theorem}\label{T:main}\
\begin{enumerate}
\item
There are dual Hopf-isomorphisms 
\begin{align*}
\hommap: &\hL_{(n)}(y\|a) \to H_T(\Gr) \\
\hommap^*:& H^T(\Gr) \to \Lambda^{(n)}(x\|a)
\end{align*}
such that
\begin{align*}
s^{(k)}_\la(y\|a) &\longmapsto [X_{w_\la^\af}]_T \\
[X_{w_\la^\af}]^T &\longmapsto \tF_\la(x\|a).
\end{align*}
\item
We have $\hommap'(s^{(k)}_\la(y\|a)) = j_{w_\la^\af}$.
\item
We have $\epsilon_\Gr(\tF_\la(x\|a)) = \xi^{w_\la^\af}$. 
\end{enumerate}
\end{theorem}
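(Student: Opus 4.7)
My plan is to establish parts (1) and (2) via a direct Hopf-duality computation, and then to reduce part (3) to the identity $\tF_{(r)}(x\|a)=h_r(x\|a)$ for $1\le r\le n-1$.

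By Proposition \ref{P:generators}, $\hommap':\La_{(n)}(y\|a)\to\B$ is an $S$-Hopf morphism, which extends formally to $\hommap':\hL'_{(n)}(y\|a)\to\hB$. With respect to the perfect dual pairings of Proposition \ref{P:mbasis} on the $(x\|a)$, $(y\|a)$ side and of $\Phi_\Gr\times\B\to S$ on the Peterson side, the adjoint $\hommap'^*:\Phi_\Gr\to\La^{(n)}(x\|a)$ is also an $S$-Hopf morphism, and
\[
\hommap'^*(\xi^{w_\la^\af}) \;=\; \sum_{\mu:\mu_1<n}\ip{\xi^{w_\la^\af}}{j_{\mu_1}j_{\mu_2}\dotsm}\,m_\mu(x\|a) \;=\; \sum_\mu \hK^{(k)}_{\la,\mu}\,m_\mu(x\|a) \;=\; \tF_\la(x\|a)
\]
by \eqref{E:eqkKostka}. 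Composing with $H^T(\Gr)\simeq\Phi_\Gr$ of Theorem \ref{T:GKMaff} gives $\hommap^*$ in part (1). Dually, writing $s^{(k)}_\la=\sum_\mu c^\la_\mu\hh_\mu(y\|a)$, the defining property $\ip{\tF_\nu}{s^{(k)}_\la}=\delta_{\nu\la}$ combined with Lemma \ref{L:Hopfduality} forces $\sum_\mu c^\la_\mu\hK^{(k)}_{\nu,\mu}=\delta_{\nu\la}$, and so
\[
\hommap'(s^{(k)}_\la) \;=\; \sum_\mu c^\la_\mu\,j_{\mu_1}j_{\mu_2}\dotsm \;=\; \sum_{\mu,\nu} c^\la_\mu\,\hK^{(k)}_{\nu,\mu}\,j_\nu \;=\; j_{w_\la^\af},
\]
proving part (2). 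Since this output lies in $\B\subset\hB$, the $S$-span $\hL_{(n)}(y\|a)$ of the $s^{(k)}_\la$ is a Hopf $S$-subalgebra of $\hL'_{(n)}(y\|a)$ mapped isomorphically onto $\B$; composing with $\B\simeq H_T(\Gr)$ (Theorem \ref{T:Pet}) completes part (1) with $s^{(k)}_\la\mapsto[X_{w_\la^\af}]_T$.

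For part (3), the equality $\epsilon_\Gr(\tF_\la(x\|a))=\xi^{w_\la^\af}$ is equivalent to $\hommap'^*=\epsilon_\Gr^{-1}$ as $S$-algebra isomorphisms $\Phi_\Gr\to\La^{(n)}(x\|a)$. Both are $S$-algebra maps, and by Proposition \ref{P:cohomgen} the target is generated over $S$ by $\{\xi^{\rho^r}:1\le r\le n-1\}$; since Lemma \ref{L:smallclasses} gives $\epsilon_\Gr(h_r(x\|a))=\xi^{\rho^r}$ and the display above gives $\hommap'^*(\xi^{\rho^r})=\tF_{(r)}(x\|a)$, the task reduces to
\[
\tF_{(r)}(x\|a) \;=\; h_r(x\|a) \qquad\text{for } 1\le r\le n-1.
\]
This is the principal technical obstacle. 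For $\la=(r)$ one has $\la_1+\ell(\la)=r+1\le n$, placing $(r)$ in the ``small'' range of Lemma \ref{L:smallclasses}, so one expects $\tF_{(r)}=s_{(r)}=h_r$. I would prove this by directly computing the equivariant $k$-Kostka coefficients $\hK^{(k)}_{(r),\mu}$ --- the coefficient of $j_r$ in $j_{\mu_1}j_{\mu_2}\dotsm\in\B$, obtainable via the equivariant Pieri rule of \cite{LS:Pieri} --- and matching them with the expansion of $h_r(x\|a)$ in the double monomial basis $\{m_\mu(x\|a)\}_{\mu_1<n}$ via the Hopf duality of Lemma \ref{L:Hopfduality}.
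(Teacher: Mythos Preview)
Your argument for parts (1) and (2) is correct and is essentially the paper's own argument, recast in the language of adjoint maps: the paper packages the same computation as the Cauchy-type identity $\sum_{\la_1<n} m_\la(x\|a)\,\hh_\la(y\|a)=\sum_{\la_1<n}\tF_\la(x\|a)\,s^{(k)}_\la(y\|a)$, applies $\hommap'$, compares with \eqref{E:defneqaffineStanley}, and invokes Theorem \ref{T:Pet}. Your direct pairing calculation $\hommap'^*(\xi^{w_\la^\af})=\sum_\mu\hK^{(k)}_{\la\mu}\,m_\mu(x\|a)=\tF_\la(x\|a)$ and its dual are exactly the same content.

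For part (3), the paper's proof is again the same reduction you make: one must show that the two $S$-algebra isomorphisms $\Phi_\Gr\to\La^{(n)}(x\|a)$ (namely $\hommap'^*$ and $\epsilon_\Gr^{-1}$) coincide, and by Proposition \ref{P:cohomgen} it suffices to check on the generators $\xi^{\rho^r}$, where Lemma \ref{L:smallclasses} gives $\epsilon_\Gr^{-1}(\xi^{\rho^r})=h_r(x\|a)$ and your computation gives $\hommap'^*(\xi^{\rho^r})=\tF_{(r)}(x\|a)$. The paper's proof simply asserts that this step ``follows from (1), Lemma \ref{L:smallclasses}, and Proposition \ref{P:cohomgen}'' and does not spell out the identity $\tF_{(r)}(x\|a)=h_r(x\|a)$ any more than you do; indeed the paper states $\tF_{(r)}=h_r$ only as a \emph{Corollary} of (3) afterwards. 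So what you flag as the ``principal technical obstacle'' is handled no more explicitly in the paper than in your proposal. Your suggested route through the equivariant Pieri rule of \cite{LS:Pieri} would certainly close it, though it is heavier than the level of detail the paper itself supplies.
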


\begin{proof}
We first establish (2).  By general properties of dual ``bases'', see for example \cite[Proposition 5.2]{Mol}, one has the identity 
\begin{equation}
\sum_{ \lambda_1 < n}  m_\la(x\|a) \hh_\la(y\|a) = \sum_{\lambda_1 < n}
\tF_\la(x\|a) s^{(k)}_\la(y\|a)
\end{equation}
in a suitable completion of $\La^{(n)}(x\|a) \otimes \La_{(n)}(y\|a)$.  Applying $\hommap'$ to both sides, one obtains
\begin{equation}
\sum_{\la \mid \lambda_1 < n}  m_\la(x\|a) j_{\la_1}(y\|a)j_{\la_2}(y\|a) \cdots j_{\la_\ell}(y\|a) = \sum_{\lambda_1 < n}
\tF_\la(x\|a) \hommap'(s^{(k)}_\la(y\|a)).
\end{equation}
Comparing this with the definition of $\tF_\la(x\|a)$, and using the fact that $\{\tF_\la(x\|a)\}$ is linearly independent, we see that the coefficient of $A_{w^\af_\la}$ in $\hommap'(s^{(k)}_\la(y\|a))$ is equal to 1, and that no other $A_{w^\af_\mu}$ occurs.  By Theorem \ref{T:Pet}(1), we obtain (2).  By Theorem \ref{T:Pet}(2), and Hopf duality, we obtain (1).  

Finally, to check (3), we have to check that the isomorphism $\epsilon_\Gr$ is indeed $(\kappa^*)^{-1}$ composed with the isomorphism of Theorem \ref{T:GKMaff}.  But this follows from (1), Lemma \ref{L:smallclasses}, and Proposition \ref{P:cohomgen}.
\end{proof}

\begin{cor}
Suppose $\la_1+\ell(\la)\le n$. Then  $\tF_\la(x\|a) = s_\la(x\|a)$.  In particular, we have $\tF_i(x\|a) = h_i(x\|a)$ for $1 \leq i \leq n -1$.
\end{cor}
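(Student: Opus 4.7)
The plan is to derive the corollary directly by combining Theorem \ref{T:main}(3) (which identifies $\epsilon_\Gr(\tF_\la(x\|a))$ with $\xi^{w_\la^\af}$) with Lemma \ref{L:smallclasses} (which identifies $\epsilon_\Gr(s_\la(x\|a))$ with $\xi^{w_\la^\af}$ precisely under the hypothesis $\la_1+\ell(\la)\le n$), and then appealing to the injectivity of $\epsilon_\Gr$ on $\La^{(n)}(x\|a)$ provided by Theorem \ref{T:CohomGr}. Both $\tF_\la(x\|a)$ and the image of $s_\la(x\|a)$ under the quotient map $\La(x\|a)\to\La^{(n)}(x\|a)$ live naturally in $\La^{(n)}(x\|a)$, so the comparison takes place there.

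Concretely, I would first invoke Theorem \ref{T:main}(3) to get $\epsilon_\Gr(\tF_\la(x\|a))=\xi^{w_\la^\af}$. Under the hypothesis $\la_1+\ell(\la)\le n$, Lemma \ref{L:smallclasses} gives $\epsilon_\Gr(s_\la(x\|a))=\xi^{w_\la^\af}$ as well. Consequently $\epsilon_\Gr(\tF_\la(x\|a)-s_\la(x\|a))=0$ in $\Phi_\Gr$. Since Theorem \ref{T:CohomGr} asserts that $\epsilon_\Gr$ induces an $S$-algebra \emph{isomorphism} $\La^{(n)}(x\|a)\cong \Phi_\Gr$, it is injective on this quotient, so $\tF_\la(x\|a)=s_\la(x\|a)$ holds in $\La^{(n)}(x\|a)$, as required.

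For the ``in particular'' clause, the single-row partition $\la=(i)$ satisfies $\la_1+\ell(\la)=i+1\le n$ exactly when $1\le i\le n-1$. Applying the Jacobi-Trudi-type definition of $s_\la(x\|a)$ to a one-row partition collapses the determinant to the single entry $h_i(x\|a)$, so $s_{(i)}(x\|a)=h_i(x\|a)$. The first part of the corollary then yields $\tF_i(x\|a)=\tF_{(i)}(x\|a)=s_{(i)}(x\|a)=h_i(x\|a)$.

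There is no real obstacle here: all of the substantive content (the construction of $\tF_\la(x\|a)$ via the nilHecke realization, the identification of the $\epsilon_\Gr$-image of small double Schur functions with Schubert classes, and the isomorphism $\La^{(n)}(x\|a)\cong\Phi_\Gr$) is already in place. The corollary is essentially a bookkeeping consequence, showing that the newly introduced affine double Schur functions recover the classical double Schur functions in the ``stable'' range where no quotient relation in $I_n$ is forced.
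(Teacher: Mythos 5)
Your argument is correct and is essentially the paper's own proof: the authors also deduce the corollary immediately from Theorem \ref{T:main} and Lemma \ref{L:smallclasses}, with the injectivity of $\epsilon_\Gr$ on $\La^{(n)}(x\|a)$ (Theorem \ref{T:CohomGr}) supplying the implicit final step exactly as you spell it out.
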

\begin{proof}
Follows immediately from Theorem \ref{T:main} and Lemma \ref{L:smallclasses}.
\end{proof}

\begin{remark} Let $d_{\la\mu}^\nu\in S$ be defined by $$s^{(k)}_\la(y\|a)s^{(k)}_{\mu}(y\|a)=\sum_\nu d_{\la\mu}^\nu s^{(k)}_\nu(y\|a).$$
These structure constants satisfy Graham positivity:
$(-1)^{|\nu|-|\la|-|\mu|} d_{\la\mu}^\nu\in \Z_{\ge0}[\alpha_1,\alpha_2,\dotsc,\alpha_{n-1}]$.
This follows from Theorem \ref{T:main}, the connection between $H_T(\Gr)$ and the equivariant quantum cohomology ring $QH^T(G/B)$ of the flag manifold \cite{P,LS:QH}, and Mihalcea's proof \cite{Mih} of positivity for the structure constants of $QH^T(G/B)$.

This is equivalent to the Graham positivity $(-1)^{\ell(x)-\ell(w)}j_w^x\in \Z_{\ge0}[\al_1,\dotsc,\al_{n-1}]$
where $w\in \tS_n^0$ and $x\in \tS_n$ with $j_w^x$ as in Theorem \ref{T:Pet}.
\end{remark}

\subsection{Affine double Stanleys and the affine flag variety}
The affine flag variety $\Fl$ of $SL_n$ is weak homotopy-equivalent to the quotient $LSU(n)/T_{\R}$, where $LSU(n)$ is the (non-based) loop space, and $T_{\R}$ is the (compact) torus of $SU(n)$.  The inclusion $\Omega SU(n) \to LSU(n)$ and the quotient map $LSU(n) \to LSU(n)/T_{\R}$ induce a pullback-map
$$
p^*: H^T(\Fl) \to H^T(\Gr).
$$
For $w \in \tS_n$, let $[X^\Fl_x]^T$ denote the equivariant cohomology Schubert classes of $H^T(\Fl)$.  It is known that the coefficient $(-1)^{\ell(x)-\ell(w)}j_w^x$ of Theorem \ref{T:Pet} is equal to the coefficient of $[X_w]^T$ in $p^*([X^\Fl_x]^T)$;see \cite[Section 5.1]{LSS} where the completely analogous situation for equivariant $K$-theory is discussed.

\begin{theorem}
Affine double Stanley symmetric functions are pullbacks of equivariant Schubert classes of the affine flag variety.  More precisely, under the isomorphism, $\kappa^*:H^T(\Gr) \simeq \La^{(n)}(x\|a)$, we have $\kappa^*(p^*([X^\Fl_x]^T)) = \tF_x(x\|a)$.
\end{theorem}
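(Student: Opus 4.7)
The strategy is to reduce the identity to a direct comparison between the defining equation of $\tF_y(x\|a)$ and the cited Schubert-basis expansion of $p^*([X^\Fl_x]^T)$, with both expansions governed by the Peterson basis coefficients $j_v^y$ of Theorem \ref{T:Pet}.

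First I would establish the key identity: for every $y \in \tS_n$,
$$
\tF_y(x\|a) = \sum_{v \in \tS_n^0} j_v^y\, \tF_v(x\|a).
$$
To prove this, start from the defining equation \eqref{E:defneqaffineStanley}. Expanding the product $j_{\mu_1} j_{\mu_2}\dotsm \in \B$ in the Peterson basis $\{j_v \mid v \in \tS_n^0\}$ via the equivariant $k$-Kostka matrix \eqref{E:eqkKostka}, and then repackaging $\sum_{\mu_1<n} \hK^{(k)}_{\la\mu}(a)\, m_\mu(x\|a)$ as $\tF_\la(x\|a)$ using the second relation of \eqref{E:eqkKostka}, the right-hand side of \eqref{E:defneqaffineStanley} collapses to $\sum_{\la_1<n} \tF_\la(x\|a)\, j_{w_\la^\af}$. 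Now substitute $j_v = \sum_{y\in\tS_n} j_v^y A_y$ (with $j_v^v=1$ and $j_v^y=0$ for $y \in \tS_n^0\setminus\{v\}$) from Theorem \ref{T:Pet}(1) and compare coefficients of $A_y$. This formal manipulation is the essential content of the proof.

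Next I would invoke the identification cited in the paragraph preceding the theorem, that the coefficient of $[X_w]^T$ in $p^*([X^\Fl_x]^T)$ is (up to sign) the Peterson coefficient $j_w^x$; the analogous $K$-theoretic statement is proved in \cite[Section 5.1]{LSS}. Writing the resulting expansion
$$
p^*([X^\Fl_x]^T) = \sum_{w \in \tS_n^0} j_w^x\, [X_w]^T
$$
and applying $\kappa^*$ (using Theorem \ref{T:main}(1) to substitute $\kappa^*([X_w]^T) = \tF_w(x\|a)$ for Grassmannian $w$) gives
$$
\kappa^*(p^*([X^\Fl_x]^T)) = \sum_{w \in \tS_n^0} j_w^x\, \tF_w(x\|a) = \tF_x(x\|a),
$$
the last equality being exactly the identity established in the first step.

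The main obstacle, in my view, is not any of the formal manipulation above (which is dictated by the shape of the definition of $\tF$), but rather reconciling sign conventions with \cite{LSS}: the paragraph preceding the theorem phrases the coefficient with a factor $(-1)^{\ell(x)-\ell(w)}$, and one must check against the $K$-theoretic convention that this sign either cancels against a matching sign in the expansion $j_v = \sum_y j_v^y A_y$ used in Step 1 or is absent in the cohomological case. Once this bookkeeping is settled, the proof is merely the triangulation of three already-established ingredients: the definition of $\tF$, the Peterson description of $H_T(\Gr)$, and the Hopf-isomorphism of the main theorem.
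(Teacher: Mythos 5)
Your proposal is correct and takes essentially the same route as the paper: the paper's one-line proof likewise rests on the identity $\sum_{\mu_1<n} m_\mu(x\|a)\, j_{\mu_1}j_{\mu_2}\dotsm = \sum_{\la_1<n} \tF_\la(x\|a)\, j_{w_\la^\af}$ extracted from the proof of Theorem \ref{T:main} (obtained there via the Cauchy kernel rather than your $k$-Kostka manipulation, an immaterial difference), which gives $\tF_x(x\|a)=\sum_{w} j_w^x\,\tF_w(x\|a)$, and then combines this with the cited Schubert expansion of $p^*([X^\Fl_x]^T)$ exactly as you do. The sign issue you flag is real but is carried identically (and equally loosely) on both sides of the comparison in the paper itself, so it does not affect the argument.
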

\begin{proof}
It follows from the proof of Theorem \ref{T:main} that the coefficient of $\tF_\la(x\|a)$ in $\tF_x(x\|a)$ is $(-1)^{\ell(x)-\ell(w)}j_w^x$.
\end{proof}

\subsection{Embeddings of affine Grassmannians}
For a discussion of non-equivariant branching coefficients we refer the reader to \cite{LLMS} for a combinatorial treatment, and \cite{Lam11} for a geometric explanation.

Recall from \S \ref{SS:Gr} that there is a $T$-equivariant embedding $\iota_\infty:\Gr\to \Gr_\infty$.
There is therefore a $\Q[a]$-algebra homomorphism $H^{T_\Z}(\Gr_\infty)\to H^T(\Gr_\infty)\to H^T(\Gr)$,
where $\Q[a]$ acts via the forgetful map $\For:\Q[a]\to S$ on the latter two $S$-algebras.  On the level of $T$-fixed points the embedding $\iota_\infty$ sends an $n$-core to itself, treated as a partition.  
By Theorems \ref{T:GKM} and \ref{T:GKMaff} the induced homomorphism corresponds to the projection map $\pi_\infty:\La(x\|a)\to \La^{(n)}(x\|a)$ which one observes is a Hopf map.

Let $\la,\nu\in\Y$ with $\la_1<n$. Define the equivariant branching coefficients $b_{\la,\nu}^{(k,\infty)}\in S$ by
\begin{align*}
  \pi_\infty(s_\nu(x\|a)) = \sum_{\la_1<n} b_{\la\nu}^{(k,\infty)} \tF_\la(x\|a).
\end{align*}
By Hopf duality, one also has
\begin{align*}
\s^{(k)}_\la (y\|a) = \sum \sum_{\nu} b_{\la\nu}^{(k,\infty)} \hs_\la(y\|a).
\end{align*}

These coefficients may be related to the equivariant $k$-Kostka coefficients. Define the
equivariant Kostka coefficients $\hK_{\la\mu}(a)\in\Q[a]$ by either of the following:
\begin{align}
  \hh_\mu(y\|a) &= \sum_\la \hK_{\la\mu}(a) \hs_\la(y\|a) \\
  s_\la(x\|a) &= \sum_\mu \hK_{\la\mu}(a) m_\mu(x\|a).
\end{align}
By the various definitions we obtain, for $\mu,\nu\in\Y$ with $\mu_1<n$,
\begin{align}
  \For(\hK_{\nu\mu}(a)) &= \sum_{\la_1<n} b_{\la\nu}^{(k,\infty)} \hK^{(k)}_{\la\mu}(a).
\end{align}
The equivariant $k$-Kostka matrix can be inverted over $S$ to yield
an expression for the branching coefficients.

More generally, let $m = rn$ for $r > 1$.  Comparing the embeddings $\Gr_{SL_n} \hookrightarrow \Gr_\infty$ and $\Gr_{SL_m} \hookrightarrow \Gr_\infty$ we see that one has a $T_n$-equivariant embedding $\Gr_{SL_n} \to \Gr_{SL_m}$, where $T_n$ is the maximal torus in $SL_n$.  On the level of $T_n$-fixed points,  we are treating $n$-cores as $m = rn$-cores.  On the level of symmetric functions we are considering the projection map $\pi:\La^{(m)}(x\|a)\to \La^{(n)}(x\|a)$.  We define the equivariant branching coefficients $b_{\la,\nu}^{(n-1,m-1)}\in S$ by
\begin{align*}
  \pi(\tF^{(m)}_\nu(x\|a)) = \sum_{\la_1<n} b_{\la\nu}^{(n-1,m-1)} \tF^{(n)}_\la(x\|a)
\end{align*}
where we have used the superscripts to distinguish the affine double Schur functions for different affine symmetric groups.

Recall that an element $a \in S$ is called Graham-positive if it is a polynomial in the simple roots with nonnegative integer coefficients.

\begin{conj} 
The equivariant branching coefficients $b_{\la\nu}^{(n-1,m-1)}$ (including $m = \infty$) are Graham-positive up to a predictable sign.  Equivalently, the image of Schubert classes under the map
$H^{T_n}(\Gr_{SL_{rn}})\to H^{T_n}(\Gr_{SL_n})$ are effective classes.
\end{conj}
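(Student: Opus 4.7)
The plan is to reduce the conjecture to a geometric effectivity statement and then invoke Graham positivity for $T_n$-equivariant cohomology, which asserts that any irreducible $T_n$-invariant cycle in $\Gr_{SL_n}$ has a Schubert expansion with coefficients in $\Z_{\ge0}[\alpha_1,\dotsc,\alpha_{n-1}]$. Since the Schubert basis expansion is given precisely by the coefficients $b_{\la\nu}^{(n-1,m-1)}$, it suffices to exhibit an explicit $T_n$-invariant effective representative for $p^*[X^{(m)}_{w^\af_\nu}]^T$, where $p:\Gr_{SL_n}\hookrightarrow\Gr_{SL_m}$. The predictable sign should be $(-1)^{|\nu|-|\la|}$, forced by degree considerations: the affine double Schur function $\tF^{(n)}_\la$ has cohomological degree $|\la|$ while $\tF^{(m)}_\nu$ has degree $|\nu|$, and equivariant pullback preserves degree, so $b^{(n-1,m-1)}_{\la\nu}\in S$ must be homogeneous of degree $|\nu|-|\la|$ with appropriate sign.

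First, I would describe the embedding $\Gr_{SL_n}\hookrightarrow\Gr_{SL_m}$ explicitly via the polynomial loop model, so that $n$-cores are realized as $m$-cores at the level of $T_n$-fixed points. The primary tool would be a $T_n$-equivariant moving or degeneration argument: using the quotient torus $T_m/T_n$ in the case $m=rn$, one seeks a one-parameter family of $T_m$-translates of $X^{(m)}_{w^\af_\nu}$ whose intersection with $\Gr_{SL_n}$ degenerates to a proper $T_n$-invariant effective cycle. When $\nu$ is a single row, this should reduce to the equivariant Pieri rule of \cite{LS:Pieri}, which is already known to be Graham-positive; the general case would be attacked by induction, with the Hopf structure on $\La^{(n)}(x\|a)$ propagating Pieri positivity to arbitrary $\nu$. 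Alternatively, one could invert the equivariant $k$-Kostka matrix directly to solve for $b_{\la\nu}^{(n-1,m-1)}$ in terms of the known $\hat{K}_{\nu\mu}(a)$, but this produces alternating sums whose cancellation is not manifest, so the geometric route seems more promising.

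The main obstacle is that equivariant pullback along a closed embedding does not in general preserve effectivity---this is in stark contrast to the multiplication case (where Graham positivity is available) and to proper pushforward. A genuinely new geometric input is therefore essential. The most promising route may be through Peterson's isomorphism identifying $H_T(\Gr_{SL_n})$ with a localization of $QH^T(SL_n/B)$ \cite{P,LS:QH}: if the embedding-induced map $H^T(\Gr_{SL_m})\to H^T(\Gr_{SL_n})$ can be translated under this correspondence into a morphism between equivariant quantum cohomologies of two different flag varieties---presumably of some level-rank type---then Mihalcea's positivity for $QH^T(SL_n/B)$ would transfer directly. Identifying such a correspondence, or finding a concrete transversality argument for the embedding, appears to be the deepest subproblem, and I expect it to be the crucial step.
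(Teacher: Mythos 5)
This statement is a \emph{conjecture} in the paper: the authors give no proof, and the only evidence cited is the non-equivariant (ungraded $k$-Schur) branching positivity established in \cite{LLMS,Lam11} together with the tables of examples. So there is no argument of the authors to compare yours against, and what you have written is, by your own account, a research plan rather than a proof: the step you isolate as essential --- producing a $T_n$-invariant effective cycle representing the pullback of a Schubert class, or a transversality/degeneration argument, or a quantum-cohomology reinterpretation of the embedding-induced map --- is exactly the open content of the conjecture, and none of these is carried out. Your reduction ``effectivity plus Graham-type positivity for invariant cycles implies the conjecture'' is essentially the reformulation the authors themselves state in the second sentence of the conjecture, so the proposal does not advance beyond the statement.

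Two concrete points. First, your claimed sign $(-1)^{|\nu|-|\la|}$ is contradicted by the paper's own data: in the $n=2$, $m=\infty$ branching table one reads $b^{(1,\infty)}_{(11),(21)}=\al_1$ while $b^{(1,\infty)}_{(1^3),(211)}=-\al_1$, and both pairs have $|\nu|-|\la|=1$. Degree considerations force only that $b^{(n-1,m-1)}_{\la\nu}$ is homogeneous of degree $|\nu|-|\la|$, not its sign, which is presumably why the authors write ``up to a predictable sign'' without specifying it. Second, the proposed induction from single rows via the Hopf/ring structure does not go through as stated: the pullback $\pi$ is a ring homomorphism, so knowing $\pi(h_r(x\|a))$ (or a Pieri-positive expansion of it) determines $\pi(s_\nu(x\|a))$ only through the Jacobi--Trudi determinant, whose alternating signs destroy any manifest positivity; moreover the equivariant Pieri rule of \cite{LS:Pieri} governs the products $j_r j_\nu$ in $H_T(\Gr)$, i.e.\ the equivariant $k$-Kostka matrix, not the branching coefficients, which as you yourself note are obtained only after inverting that matrix. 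The conjecture remains open; your proposal correctly locates, but does not close, the gap.
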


For the graded non-equivariant $k$-Schur functions the branching positivity conjecture was made in \cite{LLM}.  For proofs in the case of ungraded $k$-Schur functions see \cite{LLMS, Lam11}.

\section{Tables and examples}

\subsection{Double monomial functions}
Since $p_r[x-\ap]=m_r[x-\ap]$ the expansion
of $m_\la(x\|a)$ for $\la=(r)$ is given by Proposition \ref{P:monomial}.
We write $m_\la$ for $m_\la[x-\ap]$, which is the basis of $\La(x\|a)$ dual to 
the usual homogeneous basis $\{h_\la(y)\}\subset \hL(y\|a)$. 
\begin{align*}
  m_{11}(x\|a) &= m_{11} + a_1 m_1 \\
  m_{21}(x\|a) &= m_{21} + a_1 m_2 - 2(a_0+a_1)m_{11} - a_1(2a_0+a_1)m_1 \\
  m_{111}(x\|a) &= m_{111}+2a_1 m_{11} + a_1^2 m_1 \\
  m_{31}(x\|a) &= m_{31}+a_1m_3-(a_{-1}+a_0+a_1)m_{21}-a_1(a_{-1}+a_0+a_1)m_2 \\
  &+2(a_{-1}a_0+a_{-1}a_1+a_0a_1)m_{11}+
  a_1(2a_{-1}a_0+a_{-1}a_1+a_0a_1) m_1 \\
  m_{22}(x\|a) &= m_{22}-(a_0+a_1)m_{21}-a_0a_1m_2+(a_0+a_1)^2m_{11}+a_0a_1(a_0+a_1)m_1 \\
  m_{211}(x\|a) &= m_{211} +2a_1 m_{21}+a_1^2m_2 - 3(a_0+a_1)m_{111} -2a_1(3a_0+2a_1)m_{11}-a_1^2(3a_0+a_1)m_1 \\
  m_{1111}(x\|a) &= m_{1111}+3a_1m_{111}+3a_1^2m_{11}+a_1^3m_1.
\end{align*}
Observe that $$m_{1^r}(x\|a) = \sum_{j=1}^r \binom{r-1}{j-1} a_1^{r-j} m_{1^j}\qquad\text{for $r\ge1$.}$$

\subsection{Equivariant $k$-Kostka matrix}
In all cases $\hK^{(k)}_{\varnothing,\mu}(a)=\delta_{\varnothing,\mu}$,
and $\hK^{(k)}_{(1),\mu}(a)=\delta_{(1),\mu}$, $\hK^{(k)}_{\la,\varnothing}(a)=\delta_{\la,\varnothing}$
and $\hK^{(k)}_{\la,(1)}(a)=\delta_{\la,(1)}$.

\subsubsection{$n=2$ ($k=1$)}
\begin{align*}
  \hK^{(1)}_{(1^p),(1^{p-j})}(a) = \binom{\lfloor (p-1)/2\rfloor}{j} (-\al_1)^j.
\end{align*}

\subsubsection{$n=3$ ($k=2$)}
\begin{align*}
\begin{array}{|c|||c|c|c|c|c|c|c|c|c|c|}\hline\
&11&2&111&21&1^4&211&22&1^5&21^3&221\\ \hline\hline
11 & 1&&&&&&&&& \\ \hline
2 &1&1&&&&&&&& \\ \hline
111&-\al_2&&1&&&&&&& \\ \hline
21&-\al_1&&1&1&&&&&&\\ \hline
1^4 &\al_2(\al_1+\al_2)&&-\al_1-2\al_2&&1&&&&&\\ \hline
211&\al_1\al_2&&-2(\al_1+\al_2)&-\al_2&2&1&&&&\\ \hline
22&\al_1(\al_1+\al_2)&&-2\al_1-\al_2&-\al_1-\al_2&1&1&1&&&\\ \hline
1^5&&&\al_2(\al_1+\al_2)&&-\al_1-2\al_2&&&1&&\\ \hline
21^3&-\al_1\al_2(\al_1+\al_2)&&2\al_1^2+5\al_1\al_2+2\al_2^2&\al_2(\al_1+\al_2)&-4(\al_1+\al_2)&-\al_1-2\al_2&&2&1&\\ \hline
221&&&\al_1(\al_1+\al_2)&&-2\al_1-\al_2&-\al_1-\al_2&-\al_2&1&1&1\\ \hline
\end{array}
\end{align*}

\subsubsection{$n=4$ ($k=3$)}

\begin{align*}
\begin{array}{|c||c|c|c|c|c|c|c|c|c|} \hline
& 11 & 2 & 111 & 21 &3& 1^4 & 211 & 22&31 \\ \hline\hline
11 & 1 & & & & & &&& \\ \hline
2 & 1 & 1 & & & && && \\ \hline
111& -\al_1 && 1 & & && && \\ \hline
21&-\al_1-\al_2-\al_3&& 2&1&&&&& \\ \hline
3 & -\al_3 & & 1 & 1 & 1& &&& \\ \hline
1^4 & \al_1(\al_1+\al_2)&&-2\al_1-\al_2&&&1&&&\\ \hline
211&\al_1\al_3&&-2\al_1-\al_2-2\al_3&-\al_1&&2&1&& \\ \hline
22& (\al_1+\al_2+\al_3)^2&&-3(\al_1+\al_2+\al_3)&-\al_1-\al_2-\al_3&&2&1&1& \\ \hline
31 & \al_3(\al_2+\al_3)&&-\al_2-2\al_3&-\al_2-\al_3&&1&1&1&1 \\ \hline
\end{array}
\end{align*}

\subsubsection{$n=\infty$ ($k=\infty$)}
\begin{align*}
\begin{array}{|c||c|c|c|c|c|c|c|c|c|c|} \hline 
&11&2&1^3&21&3&1^4&211&22&31&4\\ \hline \hline
11  &1&&&&&&&&& \\ \hline
2   &1&1&&&&&&&& \\ \hline
1^3 &-a_1+a_2& &1&&&&&&& \\ \hline
21  &a_0-a_1& &2&1&&&&&& \\ \hline
3   &-a_{-1}+a_0& &1&1&1&&&&& \\ \hline
1^4 &(a_1-a_2)(a_1-a_3)& &-2a_1+a_2+a_3& & &1&&&& \\ \hline
211 &-(a_0-a_1)(a_1-a_2)&&2a_0-4a_1+2a_2&-a_1+a_2 & &3 &1&&& \\ \hline
22  &(a_0-a_1)^2& &3a_0-3a_1 &a_0-a_1 & &2 &1 &1&& \\ \hline
31  &-(a_{-1}-a_0)(a_0-a_1)&&-2a_{-1}+4a_0-2a_1&a_0-a_1 & &3 &2 &1 &1& \\ \hline
4   &(a_{-2}-a_0)(a_{-1}-a_0) & &-a_{-2}-a_{-1}+2a_0 &-a_{-2}+a_0 & &1 &1 &1 &1 &1 \\ \hline
\end{array}
\end{align*}

\subsection{Branching coefficients}
For $m=rn$ a positive integer multiple of $n$ (or $m=\infty$) there is a canonical projection
$\La^{(m)}(x\|a) \to \La^{(n)}(x\|a)$. It may be computed by taking the double monomial
expansion, removing all terms except those of $m_\mu(x\|a)$ for $\mu_1<n$,
and then applying to the coefficients, the forgetful map
$\Q[a_1,\dotsc,a_m]\to\Q[a_1,\dotsc,a_n]$ with $a_{i+kn}\mapsto a_i$
for $1\le i\le n$.

For $\la$ with $\la_1<m$ let $F_{w_\la}^{(m)}$ denote the affine double Schur
function in $\La^{(m)}(x\|a)$; for $m=\infty$ this is a double Schur function.
We compute the affine double Schur expansion of its image in $\La^{(n)}(x\|a)$
as follows. The double monomial expansion of $F_{w_\la}^{(m)}$ in $\La^{(m)}(x\|a)$ is given by
the equivariant $(m-1)$-Kostka matrix. Examples were computed previously.
We then reduce the coefficients of the matrix by the forgetful map and keep columns
corresponding to the coefficients of $m_\mu(x\|a)$ for $\mu_1<n$.
Call the result the reduced Kostka matrix. It is the first of two matrices given in the
data below.

Consider the reduced Kostka matrix. Its rows indexed by $\la$ with $\la_1<n$,
are the monomial expansion of the Schubert basis of the target space $\La^{(n)}(x\|a)$
(affine double Schurs for $\Gr_{SL_n}$).
The second matrix in the following data, expresses the rows indexed by $\la$ for $\la_1\ge n$,
in terms of the aforementioned basis rows, yielding the required expansion.

We use $m=\infty$ unless stated otherwise.

\subsubsection{$n=2$ ($k=1$)}
\begin{align*}
\begin{array}{|c||c|c|c|c|c|} \hline 
&\varnothing&1&11&1^3&1^4\\ \hline \hline
\varnothing&1&&&&  \\ \hline
 1  &&1&&& \\ \hline
11  &&&1&& \\ \hline
2   &&&1&& \\ \hline
1^3 &&&-\al_1 &1& \\ \hline
21  &&&-\al_1&2& \\ \hline
3   &&&-\al_1 &1& \\ \hline
1^4 &&&&-\al_1&1 \\ \hline
211 &&&\al_1^2&-4\al_1&3  \\ \hline
22  &&&\al_1^2&-3\al_1&2  \\ \hline
31  &&&\al_1^2&-4\al_1& 3\\ \hline
4   &&&  &-\al_1 &1  \\ \hline
\end{array}
\end{align*}
\begin{align*}
\begin{array}{|c||c|c|c|c|c|} \hline 
&11&1^3&1^4\\ \hline \hline
2   &1&& \\ \hline
21  &\al_1&2& \\ \hline
3   &&1& \\ \hline
211 &&-\al_1&3 \\ \hline
22  &&-\al_1 &2  \\ \hline
31  &&-\al_1 &3 \\ \hline
4   & &&1 \\ \hline
\end{array}
\end{align*}

\subsubsection{$n=3$ ($k=2$)}
\begin{align*}
\begin{array}{|c||c|c|c|c|c|c|c|c|c|c|} \hline 
&\varnothing&1&11&2&1^3&21&1^4&211&22\\ \hline \hline
\varnothing&1&&&&&&&& \\ \hline
 1  &&1&&&&&&& \\ \hline
11  &&&1&&&&&& \\ \hline
2   &&&1&1&&&&& \\ \hline
1^3 &&&-\alpha_2& &1&&&& \\ \hline
21  &&&-\alpha_1-\alpha_2& &2&1&&& \\ \hline
3   &&&-\alpha_1& &1&1&&& \\ \hline
1^4 &&&\alpha_2(\alpha_1+\alpha_2)& &-\al_1-2\al_2&&1&& \\ \hline
211 &&&\al_2(\al_1+\al_2)&&-2\alpha_1-4\alpha_2&-\al_2&3 &1& \\ \hline
22  &&&(\al_1+\al_2)^2& &-3\al_1-3\al_2&-\al_1-\al_2&2 &1 &1 \\ \hline
31  &&&\al_1(\al_1+\al_2)&&-4\al_1-2\al_2&-\al_1-\al_2&3 &2 &1  \\ \hline
4   &&&\al_1(\al_1+\al_2)& &-2\al_1-\al_2&-\al_1-\al_2&1 &1 &1  \\ \hline
\end{array}
\end{align*}
\begin{align*}
\begin{array}{|c||c|c|c|c|c|c|} \hline 
&1^3&21&1^4&211&22\\ \hline \hline
3   &-1&1&&& \\ \hline
31  &-\al_1-\al_2&\al_2&-2 &1 &1  \\ \hline
4    &&&-1 & &1  \\ \hline
\end{array}
\end{align*}

\subsubsection{$m=4$ and $n=2$}
\begin{align*}
\begin{array}{|c||c|c|c|} \hline
& 11  & 111  & 1^4 \\ \hline\hline
11 & 1 &   &  \\ \hline
2 & 1 & &\\ \hline
111& -\al_1 &1  & \\ \hline
21&-\al_1& 2& \\ \hline
3 & -\al_1 & 1  & \\ \hline
1^4 & &-\al_1&1 \\ \hline
211&\al_1^2&-3\al_1&2 \\ \hline
22& \al_1^2&-3\al_1&2 \\ \hline
31 & &-\al_1&1 \\ \hline
\end{array}
\end{align*}

\begin{align*}
\begin{array}{|c||c|c|c|} \hline
& 11  & 111  & 1^4 \\ \hline\hline
2 & 1 & &\\ \hline
21&\al_1& 2& \\ \hline
3 &  & 1  & \\ \hline
211&&-\al_1&2 \\ \hline
22&&-\al_1&2 \\ \hline
31 & &&1 \\ \hline
\end{array}
\end{align*}

\end{document}